\newtheorem{theorem}{Theorem}[section]
\newtheorem{lem}[theorem]{Lemma}
\newtheorem{rem}[theorem]{Remark}
\newtheorem{cor}[theorem]{Corollary}
\newtheorem{Def}[theorem]{Definition}
\newtheorem{pro}[theorem]{Proposition}
\newtheorem{Ex}[theorem]{Example}
\DeclareMathOperator*{\esssup}{ess\,sup}
\let\originalleft\left
\let\originalright\right
\renewcommand{\left}{\mathopen{}\mathclose\bgroup\originalleft}
\renewcommand{\right}{\aftergroup\egroup\originalright}
\newcommand{\Addresses}{{
		\footnote{
			\footnotesize
			
			\noindent  Sumit  Arora\\ \noindent  \texttt{arorasumit10623@gmail.com, sumit.arora@utalca.cl}\\
   
			\noindent  Rodrigo Ponce\\ \noindent 
             \texttt{rponce@utalca.cl}\\
   
			\noindent	Universidad de Talca, Instituto de Matemáticas, Casilla 747, Talca, Chile.\\

			
			\textit{Key words:} approximate controllability, evolution equation with memory, singular kernel, resolvent family.
			
			Mathematics Subject Classification (2020): 34H05, 45J05, 93B05.

}}}
\begin{document}
	\title[Evolution equation with memory]{Controllability problem of an evolution equation with singular memory \Addresses}
	\author [S. Arora and R. Ponce]{Sumit Arora and Rodrigo Ponce}
    \thanks{This work was supported by Project ANID-Fondecyt 3240359.}
	\maketitle{}

	\begin{abstract} 		
		This work addresses control problems governed by a semilinear evolution equation with singular memory kernel $\kappa(t)=\alpha e^{-\beta t}\frac{t^{\nu-1}}{\Gamma(\nu)}$, where $\alpha>0, \beta\ge 0$, and $0<\nu<1$. We examine the existence of a mild solution and the approximate controllability of both linear and semilinear control systems. To this end, we introduce the concept of a resolvent family associated with the linear evolution equation with memory and develop some of its essential properties. Subsequently, we consider a linear-quadratic regulator problem to determine the optimal control that yields approximate controllability for the linear control system. Furthermore, we derive sufficient conditions for the existence of a mild solution and the approximate controllability of a semilinear system in a super-reflexive Banach space.  Additionally, we present an approximate controllability result within the framework of a general Banach space. Finally, we apply our theoretical findings to investigate the approximate controllability of the heat equation with singular memory.
		
	\end{abstract}

 \section{Introduction}\label{intro}\setcounter{equation}{0}
In this study, we analyzing the existence and approximate controllability of the following evolution equation with singular memory:
\begin{equation}\label{SEq}
	\left\{
	\begin{aligned}
		w'(t)&=\mathrm{A}w(t)+\int_{0}^{t}\kappa(t-s)\mathrm{A}w(s)\mathrm{d}s+\mathrm{B}u(t)+f(t,w(t)), \ t\in J:=(0,T],\\
		w(0)&=\zeta,
	\end{aligned}
	\right.
\end{equation}
where $\mathrm{A}:D(\mathrm{A})\subseteq\mathbb{W}\to\mathbb{W}$ is a closed linear sectorial operator of angle $\theta\in[0,\pi/2)$ with $\overline{D(\mathrm{A})}=\mathbb{W}$ and $\mathbb{W}$ is a Banach space. The linear operator $\mathrm{B}:\mathbb{U}\to\mathbb{W}$ is bounded with $\left\|\mathrm{B}\right\|_{\mathcal{L}(\mathbb{U};\mathbb{W})}=M$ and the control function $u$ belongs to $\mathrm{L}^2(J;\mathbb{U})$, where $\mathbb{U}$ is another Banach space. The function $ f:J\times \mathbb{W} \rightarrow \mathbb{W} $ is defined in the later section. The kernel $\kappa(t)$ is given by 
	$$\kappa(t)=\alpha e^{-\beta t}\frac{t^{\nu-1}}{\Gamma(\nu)}, t>0,$$
	with $\alpha>0, \beta\ge 0$ and $0<\nu<1$.

We notice that this kernel appears, for example, in Maxwell materials in viscoelasticity theory, see for instance \cite[Section 9,
Chapter II]{JP1993} and, in the applications where the operator $\mathrm{A}$ corresponds to a second-order linear and self-adjoint operator defined on a bounded domain $\Omega$ in $\mathbb{R}^N.$

In the past decades, the study of integro-differential systems has received significant interest within the mathematical community. This growing attention is largely due to the ability of these equations to describe various phenomena that differential equations cannot fully capture. A notable example is heat diffusion in materials with memory. The classical heat equation does
not allow a complete understanding of how heat diffuses in memory materials, mainly because in the classical heat equation, it is assumed that changes in the heat source immediately affect the material. One of the first works on the study of these kind of equations dates back to the work of Coleman and Gurtin \cite{BD-ME-1967}, Gurtin and Pipkin \cite{ME-AP-1968}, and Nunziato \cite{JN-1971}, where the authors studied the heat diffusion in materials with fading memory by considering the internal energy and heat flux as functionals of $w$ and $\nabla w,$ ultimately deriving a mathematical model in the form of \eqref{SEq} to describe this phenomenon. 

On the other hand, controllability is a fundamental concept in both engineering and mathematical control theory. It reflects the ability of a solution to a control problem to lead from any given initial state to a desired target state through appropriate controls. The problem of controllability has been extensively studied in various contexts (see for instance, \cite{VB-1993,JMC-2007,EZ-2007} and the references therein). In the context of infinite-dimensional control systems, two fundamental concepts of controllability, say, exact controllability and approximate controllability are distinct and extensively studied. Generally, exact controllability is typically difficult to achieve in infinite-dimensional control systems (cf. \cite{JMC-2007,SG-OY-2013,AH-LP-2012,TR-1980,TR-1977,EZ-2007} etc.). Specifically, the prototype wave equation and transport equation etc., are examples where the exact controllability is attainable.  Thus, it is important to study the problem of approximate controllability, which is a weaker form of controllability . This concept implies that the system can steer into an arbitrary small neighborhood of the final state. In particular, the study of the approximate controllability in infinite-dimensional control systems has received considerable interest due to its large applications (see \cite{JMC-2007,JK-2018,SIAM2003,EZ-2007} for instance). 
Several researchers have obtained significant results on the approximate controllability of various semilinear and nonlinear systems (cf. \cite{SAAK2024,JDE2020,MTM-20} and the references therein).

Therefore, the study of the controllability of integro-differential systems in the form of \eqref{SEq} arises naturally. Controllability problems for parabolic integro-differential equations have been extensively studied in recent years (see for instance \cite{Ba-Ia-00,FWC-XZ-EZ-17,XZ-21} and the references therein). Typical hypotheses in the literature on kernel $\kappa$ to study the controllability of system \eqref{SEq} are to assume that $\kappa$ is a completely monotonic kernel \cite{Ta-Ga-16}, a completely monotonic and infinitely differentiable function \cite{Ba-Ia-00} or a continuously differentiable function \cite{AH-LP-2015,XZ-21}. However, in our case, the relaxation kernel $\kappa$ is not necessarily completely monotonic (see \cite[Section 2.7]{Ar-Ba-Hi-Ne-11}) or a continuously differentiable function, and corresponds to a singular kernel.

 Recently, Pandolfi  \cite{LP-2021} studied the approximate controllability of a general class of control systems with singular memory by applying boundary control. The considered system particularly includes systems with fractional derivatives and integrals as well as the standard heat equation. 

In this work, we consider the distributed control problem involving singular memory and our aim to find conditions on $\kappa$ and $\mathrm{A}$ that guarantee the approximate controllability of the system \eqref{SEq}. To achieve this, we first observe that  if $\nu\geq 1,$ then the equation has a non-singular kernel and the theory of $C_0$-semigroups can be applied (see for instance \cite{KJE2000}), but, our conditions on the parameters $\alpha,\beta$ and $\nu,$ imply that the kernel $\kappa$ does not belong to the space $\mathrm{W}^{1,1}_{\rm loc}(\mathbb{R}_+,\mathbb{C})$ and therefore it is not possible to reduce the system \eqref{SEq} to a first-order equation (see \cite[Chapter VI]{KJE2000}). Thus, it is essential to introduce the theory of resolvent families, which seems to be an appropriate tool to deal the system with singular memory. 

The investigation of approximate controllability in systems with memory using resolvent families is relatively new, and there are only few results concerning the existence and approximate controllability of Volterra-type systems, see for instance \cite{YK-YP-2019,LP-2021,RP2021} etc.

 To study the approximate controllability of \eqref{SEq}, we first examine the well-posedness of the system described in \eqref{NHS}, which is equivalent to the existence of the resolvent family defined below. We also establish key properties of the resolvent family that are essential for subsequent analysis. Further, we consider the control problem and first develop the existence of a mild solution of semilinear equation for a given control $u\in\mathrm{L}^{2}(J;\mathbb{U})$ by assuming the Carath\'eodory condition on the nonlinear term $f(\cdot,\cdot)$ (see Theorems \ref{thm2.9} and \ref{thm2.10}). We then discuss the approximate controllability of the linear problem via the optimal control problem (see Subsection \ref{sublinear}). Next, we establish sufficient conditions for the approximate controllability of the semilinear system \eqref{SEq} in a super-reflexive Banach space (cf. Theorem \ref{thm3.11} and \ref{thm4.4}). We also present the approximate controllability of the semilinear system \eqref{SEq} within the framework of
general Banach spaces by assuming that the corresponding linear system is approximately controllable on $[0,t]$ for all $0<t\le T$ (see Theorem \ref{thm3.15}). Finally, in the applications, we derive an extension of the of the rank condition of the controllability to the system \eqref{SEq} (see Proposition \ref{Prop4.2}).


Throughout this manuscript, the duality pairing between $\mathbb{W}$ and its dual $\mathbb{W^{*}}$ is indicated by $\langle \cdot, \cdot  \rangle $. The notations $\mathcal{L}(\mathbb{U};\mathbb{W})$ and $\mathcal{L}(\mathbb{W})$ denote, respectively, the space of all bounded linear operators from $\mathbb{U}$ to $\mathbb{W}$, equipped with the operator norm $\|\cdot\|_{\mathcal{L}(\mathbb{U};\mathbb{W})}$, and the space of all bounded linear operators on $\mathbb{W}$, endowed with the norm $\|\cdot\|_{\mathcal{L}(\mathbb{W})}$. For a densely defined closed linear operator $\mathrm{A}$ on $\mathbb{W}$, the set $\sigma(\mathrm{A})$ and $\rho(\mathrm{A})$ represent the spectrum and the resolvent set of $\mathrm{A}$, respectively.

\section{Well-posedness and resolvent} \label{Resolvent}\setcounter{equation}{0} 
This section first focus to the well-posedness of the linear non-homogeneous integrodifferential equation of the form:
\begin{equation}\label{NHS}
	\left\{
	\begin{aligned}
		w'(t)&=\mathrm{A}w(t)+\int_{0}^{t}\kappa(t-s)\mathrm{A}w(s)\mathrm{d}s+h(t),\ t\in(0,T],\\
		w(0)&=\zeta,
	\end{aligned}
	\right.
\end{equation}
where the operator $\mathrm{A}$ and the kernel $\kappa(\cdot)$ are the same as in \eqref{SEq}. $h:J\to\mathbb{W}$ is an appropriate function.

To complete this, we introduce the concept of a \emph{resolvent family} corresponding to the homogeneous linear evolution equation:
\begin{equation}\label{LEq}
	\left\{
	\begin{aligned}
		w'(t)&=\mathrm{A}w(t)+\int_{0}^{t}\kappa(t-s)\mathrm{A}w(s)\mathrm{d}s, \ t\in(0,T],\\
		w(0)&=\zeta.
	\end{aligned}
	\right.
\end{equation}
 It is straightforward to identify that the system \eqref{LEq} is equivalent to the integral equation: 
\begin{align}\label{LIEq}
    w(t)=\zeta+\int^t_0(1+1*\kappa)(t-s)\mathrm{A}w(s)\mathrm{ds},\ \ t\in[0, T].
\end{align}
The well-posedness of the system \eqref{LEq} (or corresponding integral equation \eqref{LIEq}) is equivalent to the existence of a resolvent family, that is, a family of bounded linear operators $(\mathscr{G}_{\alpha,\beta}^\nu(t))_{t\geq 0}$ on $\mathbb{W}$, which satisfy the following properties (see, \cite{JP1993}, Chapter 1):
\begin{itemize}
    \item [\textit{(a)}] $\mathscr{G}_{\alpha,\beta}^\nu(t)$ is strongly continuous on $\mathbb{R}_+$ and $\mathscr{G}_{\alpha,\beta}^\nu(0)=\mathrm{I}$;
    \item [\textit{(b)}] $\mathscr{G}_{\alpha,\beta}^\nu(t)z\in D(\mathrm{A})$ and $\mathscr{G}_{\alpha,\beta}^\nu(t)\mathrm{A}z=\mathrm{A}\mathscr{G}_{\alpha,\beta}^\nu(t)z$ for all $z\in D(\mathrm{A})$ and $t\geq 0$;
    \item [\textit{(c)}] For all $z\in D(\mathrm{A})$ and $t\geq 0$, the resolvent equation satisfy:
   \begin{align}\label{REq}
    \mathscr{G}_{\alpha,\beta}^\nu(t)z=z+\int^t_0(1+1*\kappa)(t-s)\mathrm{A}\mathscr{G}_{\alpha,\beta}^\nu(s)z\mathrm{ds}.
\end{align}
\end{itemize}
The above family is called an $(\alpha,\beta,\nu)$-\emph{resolvent family} generated by $\mathrm{A}.$ If the family exists, then for given $\zeta\in\mathbb{W}$, the unique mild solution of the system \eqref{LIEq} is given by (cf. \cite{RP2021}):
$$w(t)=\mathscr{G}_{\alpha,\beta}^\nu(t)\zeta, \ \ t\in[0, T].$$

If the Laplace transformation of $\mathscr{G}_{\alpha,\beta}^\nu(\cdot)$ exists, then it will be given by
$$\widehat{\mathscr{G}}_{\alpha,\beta}^\nu(\lambda)=(\lambda I-\frac{\alpha}{(\lambda+\beta)^{\nu}}\mathrm{A}-\mathrm{A})^{-1}\in\mathcal{L}(\mathbb{W}).$$

In order to prove the existence of the family $\mathscr{G}_{\alpha,\beta}^\nu(\cdot)$, we first introduce the definition of a sectorial operator. An operator $\mathrm{A}$ is said to be sectorial (of angle $\theta$), if there exists $0<\theta\le \frac{\pi}{2}$ such that the sector
\begin{align*}
    \Sigma_\theta:=\{ \lambda\in\mathbb{C}\setminus\{0\}: |\arg(\lambda)|< \theta+\frac{\pi}{2}\}\subset \rho(\mathrm{A}),
\end{align*}
and if for each $\omega\in(0,\theta)$, there exists $K_{\omega}\ge 1$ such that
\begin{align*}
    ||(\lambda\mathrm{I}-\mathrm{A})^{-1}||\le\frac{K_{\omega}}{|\lambda|}, \ \mbox{for all}\ 0\neq \lambda\in\overline{\Sigma}_{\theta-\omega},
\end{align*}
holds. For more details, the interested readers are recommended to see \cite{KJE2000,MH2006} and the references therein.

 In the sequel, we define a set $\Sigma_{r,\theta}=\{\lambda\in\mathrm{\textbf{C}}\setminus\{0\}:|\lambda|>r, |\arg(\lambda)|<\frac{\pi}{2}+\theta\}$ for $r>0$ and $\theta\in(0,\pi/2)$. Let $\Gamma_{r,\theta}=\cup_{k=1}^{3} \Gamma^{k}_{r,\theta}$, where $\Gamma^{1}_{r,\theta}=\{se^{i(\frac{\pi}{2}+\theta)}:s\ge r\},\ \Gamma^{2}_{r,\theta}=\{re^{i\phi}: -(\frac{\pi}{2}+\theta)\le\phi\le(\frac{\pi}{2}+\theta)\},\ \Gamma^{3}_{r,\theta}=\{se^{-i(\frac{\pi}{2}+\theta)}:s\ge r\}$ and the orientation is taken counterclockwise. In addition, let $$\Omega(\widehat{\mathscr{G}}_{\alpha,\beta}^\nu)=\left\{\lambda\in\mathrm{\textbf{C}}:\widehat{\mathscr{G}}_{\alpha,\beta}^\nu(\lambda):=(\lambda\mathrm{I}-\frac{\alpha}{(\lambda+\beta)^{\nu}}\mathrm{A}-\mathrm{A})^{-1}\in\mathcal{L}(\mathbb{W})\right\}.$$ 
\begin{lem}\label{lem2.1}
    Let $\mathrm{A}$ be a sectorial operator with angle $\frac{\nu\pi}{2}$. Then there exists an $\tilde{r}>0$ such that $\Sigma_{\tilde{r},\frac{\nu\pi}{2}}\in\Omega(\widehat{\mathscr{G}}_{\alpha,\beta}^\nu)$ and the mapping $\widehat{\mathscr{G}}_{\alpha,\beta}^\nu:\Sigma_{\tilde{r},\frac{\nu\pi}{2}}\to\mathcal{L}(\mathbb{W})$ is analytic. Moreover, 
     \begin{align}\label{r1}
     \widehat{\mathscr{G}}_{\alpha,\beta}^\nu(\lambda)=(\lambda\mathrm{I}-\mathrm{A})^{-1}(\mathrm{I}-\frac{\alpha}{(\lambda+\beta)^{\nu}}\mathrm{A}(\lambda\mathrm{I}-\mathrm{A})^{-1})^{-1},\end{align}
     and there exists a constant $\tilde{M}>0$ such that \begin{align}\label{re}||\lambda\widehat{\mathscr{G}}_{\alpha,\beta}^\nu(\lambda)||_{\mathcal{L}(\mathbb{W})}\le \tilde{M}.\end{align}
\end{lem}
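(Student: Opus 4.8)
The plan is to reduce the existence of $\widehat{\mathscr{G}}_{\alpha,\beta}^\nu(\lambda)$ to an algebraic factorization of the operator $\lambda\mathrm{I}-\mathrm{A}-\frac{\alpha}{(\lambda+\beta)^{\nu}}\mathrm{A}$ together with a Neumann series argument valid for large $|\lambda|$. First I would fix $\omega\in(0,\nu\pi/2)$ and work on the closed subsector $\overline{\Sigma}_{\frac{\nu\pi}{2}-\omega}$, on which sectoriality provides $\|(\lambda\mathrm{I}-\mathrm{A})^{-1}\|\le K_\omega/|\lambda|$. There $\lambda\in\rho(\mathrm{A})$, so using $\mathrm{A}(\lambda\mathrm{I}-\mathrm{A})^{-1}(\lambda\mathrm{I}-\mathrm{A})=\mathrm{A}$ one obtains the factorization
\[
\lambda\mathrm{I}-\mathrm{A}-\frac{\alpha}{(\lambda+\beta)^{\nu}}\mathrm{A}=\Big(\mathrm{I}-\frac{\alpha}{(\lambda+\beta)^{\nu}}\mathrm{A}(\lambda\mathrm{I}-\mathrm{A})^{-1}\Big)(\lambda\mathrm{I}-\mathrm{A}).
\]
Inverting both factors, once the left-hand one is shown invertible, yields precisely the representation \eqref{r1}.

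The core step is to invert the middle factor. Writing $\mathrm{A}(\lambda\mathrm{I}-\mathrm{A})^{-1}=\lambda(\lambda\mathrm{I}-\mathrm{A})^{-1}-\mathrm{I}$, sectoriality gives the uniform bound $\|\mathrm{A}(\lambda\mathrm{I}-\mathrm{A})^{-1}\|\le 1+K_\omega$. Since $0<\nu<1$ and $\beta\ge 0$, for $|\lambda|\ge 2\beta$ the reverse triangle inequality gives $|\lambda+\beta|\ge|\lambda|/2$, hence
\[
\Big\|\frac{\alpha}{(\lambda+\beta)^{\nu}}\mathrm{A}(\lambda\mathrm{I}-\mathrm{A})^{-1}\Big\|\le\frac{\alpha(1+K_\omega)\,2^{\nu}}{|\lambda|^{\nu}}\xrightarrow[|\lambda|\to\infty]{}0.
\]
Thus there is $\tilde{r}>0$ (depending on $\alpha,\beta,\nu,K_\omega$) such that this norm is $\le 1/2$ for $|\lambda|>\tilde{r}$, and the Neumann series furnishes a bounded inverse of the middle factor of norm at most $2$. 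Composing with $(\lambda\mathrm{I}-\mathrm{A})^{-1}$ shows $\widehat{\mathscr{G}}_{\alpha,\beta}^\nu(\lambda)\in\mathcal{L}(\mathbb{W})$, i.e.\ $\lambda\in\Omega(\widehat{\mathscr{G}}_{\alpha,\beta}^\nu)$.

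For analyticity I would use that $\lambda\mapsto(\lambda\mathrm{I}-\mathrm{A})^{-1}$ is analytic on $\rho(\mathrm{A})$ and that, because the half-angle $\frac{\pi}{2}+\frac{\nu\pi}{2}$ is strictly less than $\pi$, the shifted point $\lambda+\beta$ avoids the negative real axis, so the principal branch $(\lambda+\beta)^{-\nu}$ is analytic on the sector. Since the Neumann series converges uniformly on compact subsets, the inverse of the middle factor is analytic, and \eqref{r1} then exhibits $\widehat{\mathscr{G}}_{\alpha,\beta}^\nu$ as a product of analytic $\mathcal{L}(\mathbb{W})$-valued maps. The estimate \eqref{re} follows by combining $\|\lambda(\lambda\mathrm{I}-\mathrm{A})^{-1}\|\le K_\omega$ with the bound $2$ on the middle inverse, giving $\|\lambda\widehat{\mathscr{G}}_{\alpha,\beta}^\nu(\lambda)\|_{\mathcal{L}(\mathbb{W})}\le 2K_\omega=:\tilde{M}$.

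The main obstacle I anticipate is the bookkeeping around the sector geometry. The sectorial estimate is only guaranteed on closed subsectors $\overline{\Sigma}_{\frac{\nu\pi}{2}-\omega}$, whereas the statement asserts the conclusion on the full open sector $\Sigma_{\tilde{r},\frac{\nu\pi}{2}}$; to bridge this one must either let $\omega\to 0^+$, allowing $K_\omega$ (and hence $\tilde{r},\tilde{M}$) to depend on the subsector, or argue that the relevant quantities are controlled uniformly up to the boundary. Verifying $|\lambda+\beta|\gtrsim|\lambda|$ uniformly across the sector, including the part with $\mathrm{Re}\,\lambda<0$, and confirming that the chosen branch of the fractional power is consistent throughout, are the technical details requiring the most care.
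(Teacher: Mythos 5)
Your proposal follows essentially the same route as the paper's own proof: the factorization $\lambda\mathrm{I}-\mathrm{A}-\frac{\alpha}{(\lambda+\beta)^{\nu}}\mathrm{A}=\big(\mathrm{I}-\frac{\alpha}{(\lambda+\beta)^{\nu}}\mathrm{A}(\lambda\mathrm{I}-\mathrm{A})^{-1}\big)(\lambda\mathrm{I}-\mathrm{A})$, the bound on the perturbation via $\mathrm{A}(\lambda\mathrm{I}-\mathrm{A})^{-1}=\lambda(\lambda\mathrm{I}-\mathrm{A})^{-1}-\mathrm{I}$ together with sectoriality, a Neumann series for $|\lambda|$ large to invert the middle factor and obtain \eqref{r1}, and analyticity plus the estimate \eqref{re} from the same ingredients. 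The only divergence is the subsector-uniformity issue you flag at the end, which is a genuine technical point but one the paper's proof also glosses over by using a single constant on the whole sector $\Sigma_{\tilde{r},\frac{\nu\pi}{2}}$.
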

\begin{proof}
    Since $$\left\|\frac{\alpha}{(\lambda+\beta)^{\nu}}\mathrm{A}(\lambda\mathrm{I}-\mathrm{A})^{-1}\right\|_{\mathcal{L}(\mathbb{W})}=\left\|\frac{\alpha}{(\lambda+\beta)^{\nu}}(\mathrm{I}-\lambda(\lambda\mathrm{I}-\mathrm{A})^{-1})\right\|_{\mathcal{L}(\mathbb{W})}\le \frac{|\alpha|C}{|(\lambda+\beta)^{\nu}|}.$$
     This fact implies that there exists a $\tilde{r}>0$ such that $\left\|\frac{\alpha}{(\lambda+\beta)^{\nu}}\mathrm{A}(\lambda\mathrm{I}-\mathrm{A})^{-1}\right\|_{\mathcal{L}(\mathbb{W})}<1$ for $\lambda\in\Sigma_{\tilde{r},\frac{\nu\pi}{2}}.$
    Subsequently, the operator $(\mathrm{I}-\frac{\alpha}{(\lambda+\beta)^{\nu}}\mathrm{A}(\lambda\mathrm{I}-\mathrm{A})^{-1})^{-1}$ is invertible and has a continuous inverse. Moreover, for $x\in\mathbb{W}$, we have 
\begin{align*}
 &(\lambda\mathrm{I}-\frac{\alpha}{(\lambda+\beta)^{\nu}}\mathrm{A}-\mathrm{A})(\lambda\mathrm{I}-\mathrm{A})^{-1}(\mathrm{I}-\frac{\alpha}{(\lambda+\beta)^{\nu}}\mathrm{A}(\lambda\mathrm{I}-\mathrm{A})^{-1})^{-1}x\\&=(\lambda\mathrm{I}-\frac{\alpha}{(\lambda+\beta)^{\nu}}\mathrm{A}-\mathrm{A})(\lambda\mathrm{I}-\frac{\alpha}{(\lambda+\beta)^{\nu}}\mathrm{A}-\mathrm{A})^{-1}x=x,
 \end{align*}
 and for $x\in D(\mathrm{A})$, we verify
 \begin{align*}
     &(\lambda\mathrm{I}-\mathrm{A})^{-1}(\mathrm{I}-\frac{\alpha}{(\lambda+\beta)^{\nu}}\mathrm{A}(\lambda\mathrm{I}-\mathrm{A})^{-1})^{-1}(\lambda\mathrm{I}-\frac{\alpha}{(\lambda+\beta)^{\nu}}\mathrm{A}-\mathrm{A})x\\&=(\lambda\mathrm{I}-\frac{\alpha}{(\lambda+\beta)^{\nu}}\mathrm{A}-\mathrm{A})^{-1}(\lambda\mathrm{I}-\frac{\alpha}{(\lambda+\beta)^{\nu}}\mathrm{A}-\mathrm{A})x\\&=x.
 \end{align*}
    Hence,
    $$\widehat{\mathscr{G}}_{\alpha,\beta}^\nu(\lambda)=(\lambda\mathrm{I}-\frac{\alpha}{(\lambda+\beta)^{\nu}}\mathrm{A}-\mathrm{A})^{-1}=(\lambda\mathrm{I}-\mathrm{A})^{-1}(\mathrm{I}-\frac{\alpha}{(\lambda+\beta)^{\nu}}\mathrm{A}(\lambda\mathrm{I}-\mathrm{A})^{-1})^{-1}.$$ 
 As $||(\mathrm{I}-\frac{\alpha}{(\lambda+\beta)^{\nu}}\mathrm{A}(\lambda\mathrm{I}-\mathrm{A})^{-1})^{-1}||_{\mathcal{L}(\mathbb{W})}<1$, from the above expression, we deduce that
 \begin{align*}
    \widehat{\mathscr{G}}_{\alpha,\beta}^\nu(\lambda)=(\lambda \mathrm{I}-\mathrm{A})^{-1}\sum_{n=0}^{\infty}\left(\frac{\alpha}{(\lambda+\beta)^{\nu}}\mathrm{A}(\lambda\mathrm{I}-\mathrm{A})^{-1}\right)^{n}\ \mbox{for}\ \lambda\in\Sigma_{\tilde{r},\frac{\nu\pi}{2}}.
 \end{align*}
 Thus, the function $\widehat{\mathscr{G}}_{\alpha,\beta}^\nu(\lambda)$ is analytic for all $\lambda\in\Sigma_{\tilde{r},\frac{\nu\pi}{2}}$. 


Now since $\mathrm{A}$ is a sectorial operator of angle $\frac{\nu\pi}{2}$, then we compute
 \begin{align*}
 ||\lambda\widehat{\mathscr{G}}_{\alpha,\beta}^\nu(\lambda)||_{\mathcal{L}(\mathbb{W})}&=||(\lambda\mathrm{I}-\mathrm{A})^{-1}(\mathrm{I}-\frac{\alpha}{(\lambda+\beta)^{\nu}}\mathrm{A}(\lambda\mathrm{I}-\mathrm{A})^{-1})^{-1}||_{\mathcal{L}(\mathbb{W})}\\&\le K||(\mathrm{I}-\frac{\alpha}{(\lambda+\beta)^{\nu}}\mathrm{A}(\lambda\mathrm{I}-\mathrm{A})^{-1})^{-1}||_{\mathcal{L}(\mathbb{W})}\\&\le \tilde{M}\ \mbox{for all}\  \lambda\in\Sigma_{\tilde{r},\frac{\nu\pi}{2}},
 \end{align*}
 the proof is complete.
\end{proof}
Using the inversion formula of the Laplace transformation, we define an operator family $(\mathscr{G}_{\alpha,\beta}^\nu(t))_{t\geq 0}$ by
\begin{equation}\label{resolvent}
		\mathscr{G}_{\alpha,\beta}^\nu(t)=
		\begin{dcases}
			\frac{1}{2\pi i}\int_{\Gamma_{\tilde{r},\frac{\nu\pi}{2}}}e^{\lambda t}\widehat{\mathscr{G}}_{\alpha,\beta}^\nu(\lambda)\mathrm{d}\lambda, \ t>0,\\
			\mathrm{I}, \ \qquad\qquad\qquad\qquad\quad t=0.
		\end{dcases}
	\end{equation}
    We will now verify that  $(\mathscr{G}_{\alpha,\beta}^\nu(t))_{t\geq 0}$  is a \emph{resolvent family}  for \eqref{LIEq}.
    \begin{lem}
        The mapping $\mathscr{G}_{\alpha,\beta}^\nu(\cdot)$ is exponentially bounded in $\mathcal{L}(\mathbb{W})$.
    \end{lem}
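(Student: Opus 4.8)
The plan is to estimate the operator norm of the Dunford-type integral \eqref{resolvent} directly, feeding in the resolvent bound from Lemma \ref{lem2.1}. Writing $\theta=\tfrac{\nu\pi}{2}$ and taking norms under the integral sign, I would start from
$$\|\mathscr{G}_{\alpha,\beta}^\nu(t)\|_{\mathcal{L}(\mathbb{W})}\le \frac{1}{2\pi}\int_{\Gamma_{\tilde r,\theta}} e^{t\,\mathrm{Re}\,\lambda}\,\|\widehat{\mathscr{G}}_{\alpha,\beta}^\nu(\lambda)\|_{\mathcal{L}(\mathbb{W})}\,|\mathrm{d}\lambda| \le \frac{\tilde M}{2\pi}\int_{\Gamma_{\tilde r,\theta}} \frac{e^{t\,\mathrm{Re}\,\lambda}}{|\lambda|}\,|\mathrm{d}\lambda|,$$
where I have used \eqref{re} in the form $\|\widehat{\mathscr{G}}_{\alpha,\beta}^\nu(\lambda)\|_{\mathcal{L}(\mathbb{W})}\le \tilde M/|\lambda|$. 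On the two rays one has $\lambda=se^{\pm i(\pi/2+\theta)}$, so $\mathrm{Re}\,\lambda=-s\sin\theta$ with $\sin\theta>0$ (as $0<\nu<1$), which yields exponential decay in $s$ and in particular absolute convergence of the integral for each $t>0$; on the arc $\Gamma^2$ one has $|\lambda|=\tilde r$ and $\mathrm{Re}\,\lambda\le \tilde r$, hence $e^{t\,\mathrm{Re}\,\lambda}\le e^{\tilde r t}$.

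For $t\ge 1/\tilde r$ I would estimate on the fixed contour. The arc contributes at most a constant multiple of $e^{\tilde r t}$ (its length is $\sim\tilde r$ and the integrand is $\le \tilde M e^{\tilde r t}/\tilde r$), while on each ray the substitution $u=st$ gives $\tilde M\int_{t\tilde r}^\infty e^{-u\sin\theta}\,\tfrac{\mathrm{d}u}{u}\le \tilde M\int_1^\infty e^{-u\sin\theta}\,\tfrac{\mathrm{d}u}{u}$, a finite constant because $t\tilde r\ge 1$. This already gives $\|\mathscr{G}_{\alpha,\beta}^\nu(t)\|_{\mathcal{L}(\mathbb{W})}\le C_1 e^{\tilde r t}$ for all $t\ge 1/\tilde r$.

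The delicate range is $0<t<1/\tilde r$, where the same ray estimate only produces $\tilde M\int_{t\tilde r}^\infty e^{-u\sin\theta}\tfrac{\mathrm du}{u}$, which diverges like $\log(1/t)$ as $t\to 0^+$. This logarithmic blow-up of the fixed-contour bound is, I expect, the main obstacle, and it is resolved by deforming the path of integration to scale with $t$. Since by Lemma \ref{lem2.1} the map $\widehat{\mathscr{G}}_{\alpha,\beta}^\nu$ is analytic on $\Sigma_{\tilde r,\theta}$ and $e^{\lambda t}$ is entire, Cauchy's theorem allows replacing $\Gamma_{\tilde r,\theta}$ by $\Gamma_{R,\theta}$ with $R=1/t>\tilde r$: the two contours bound an annular sector contained in the region of analyticity, so the corresponding integrals agree. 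On the new arc $|\lambda|=1/t$ one has $e^{t\,\mathrm{Re}\,\lambda}=e^{\cos\phi}\le e$, together with $\|\widehat{\mathscr{G}}_{\alpha,\beta}^\nu(\lambda)\|_{\mathcal{L}(\mathbb{W})}\le \tilde M t$ and $|\mathrm d\lambda|=\tfrac1t\,\mathrm d\phi$, so the arc contributes a constant; on the new rays the substitution $u=st$ now has lower limit $u=1$, giving $\tilde M\int_{1}^\infty e^{-u\sin\theta}\tfrac{\mathrm du}{u}$, again a constant. Hence $\|\mathscr{G}_{\alpha,\beta}^\nu(t)\|_{\mathcal{L}(\mathbb{W})}\le C_2$ uniformly for $0<t<1/\tilde r$.

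Combining the two ranges together with $\mathscr{G}_{\alpha,\beta}^\nu(0)=\mathrm I$ then yields constants $M\ge 1$ and $\omega:=\tilde r\ge 0$ such that $\|\mathscr{G}_{\alpha,\beta}^\nu(t)\|_{\mathcal{L}(\mathbb{W})}\le M e^{\omega t}$ for all $t\ge 0$, which is the asserted exponential boundedness. The only genuinely nontrivial point is the contour-deformation step that cures the small-time singularity; everything else is a routine splitting of $\Gamma_{\tilde r,\theta}$ into its three pieces and the scaling substitution $u=st$.
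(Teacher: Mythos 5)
Your proof is correct and follows the same basic strategy as the paper's: bound the Dunford integral \eqref{resolvent} using $\|\widehat{\mathscr{G}}_{\alpha,\beta}^\nu(\lambda)\|_{\mathcal{L}(\mathbb{W})}\le \tilde M/|\lambda|$ from \eqref{re} and split $\Gamma_{\tilde r,\nu\pi/2}$ into its three pieces, with the substitution $l=st\sin(\nu\pi/2)$ on the rays and the trivial bound $e^{\tilde r t}$ on the arc. The one substantive difference is that you correctly flag the small-time behaviour: the fixed-contour ray estimate produces $\int_{\tilde r t\sin(\nu\pi/2)}^{\infty}e^{-l}l^{-1}\,\mathrm{d}l$, which grows like $\log(1/t)$ as $t\to 0^{+}$, so the constant $C_1$ appearing in the paper's proof is not uniform in $t$ near $0$ and the paper's argument, as written, does not by itself give a bound of the form $Me^{\omega t}$ down to $t=0$. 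Your repair --- deforming to the $t$-dependent contour $\Gamma_{1/t,\nu\pi/2}$ for $0<t<1/\tilde r$, justified by the analyticity of $\widehat{\mathscr{G}}_{\alpha,\beta}^\nu$ on $\Sigma_{\tilde r,\nu\pi/2}$ and the decay of the integrand, so that the ray integrals start at $u=1$ and the arc contributes $O(1)$ --- is exactly the standard device from the analytic-semigroup literature and closes this gap. So your argument is not merely equivalent to the paper's; it is the more complete version of it, and the extra contour-deformation step is genuinely needed for uniform boundedness near $t=0$ (which is also used later, e.g.\ in the strong-continuity lemma where boundedness of $\mathscr{G}_{\alpha,\beta}^\nu(\cdot)$ on $[0,1]$ is invoked).
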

    \begin{proof}
The proof of the lemma follows by the direct calculation. For $t>0$, let us evaluate
 \begin{align*}
     ||\mathscr{G}_{\alpha,\beta}^\nu(t)||_{\mathcal{L}(\mathbb{W})}&\le \frac{\tilde{M}}{2\pi}\int_{\Gamma_{\tilde{r},\frac{\nu\pi}{2}}}\frac{|e^{\lambda t}|}{|\lambda|}|\mathrm{d}\lambda|\nonumber\\&=\sum_{i=1}^{3}\frac{\tilde{M}}{2\pi}\int_{\Gamma^{i}_{\tilde{r},\frac{\nu\pi}{2}}}\frac{e^{\mathrm{Re}(\lambda)t}}{|\lambda|}|\mathrm{d}\lambda|.
 \end{align*}
Let us first compute
\begin{align*}
    \frac{\tilde{M}}{2\pi}\int_{\Gamma^{1}_{\tilde{r},\frac{\nu\pi}{2}}}\frac{e^{\mathrm{Re}(\lambda)t}}{|\lambda|}|\mathrm{d}\lambda|=\frac{\tilde{M}}{2\pi}\int_{r}^{\infty}\frac{e^{-st\sin(\frac{\nu\pi}{2})}}{s}\mathrm{d}s=\frac{\tilde{M}}{2\pi}\int_{rt\sin(\frac{\nu\pi}{2})}^{\infty}\frac{e^{-l}}{l}\mathrm{d}l\le C_{1}.
\end{align*}
The estimate over $\Gamma^{3}_{\tilde{r},\frac{\nu\pi}{2}}$ is evaluated in a similar manner. We now calculate
\begin{align*}
    \frac{\tilde{M}}{2\pi}\int_{\Gamma^{2}_{\tilde{r},\frac{\nu\pi}{2}}}\frac{e^{\mathrm{Re}(\lambda)t}}{|\lambda|}|\mathrm{d}\lambda|&=\frac{\tilde{M}}{2\pi}\int_{-(1+\nu)\frac{\pi}{2}}^{(1+\nu)\frac{\pi}{2}}\frac{e^{rt\cos(\phi)}}{r}\mathrm{d}\phi\nonumber\\&\le \frac{\tilde{M}(1+\nu)}{2 r} e^{rt}.
\end{align*}
Therefore, the mapping $\mathscr{G}_{\alpha,\beta}^\nu(\cdot)$ is exponentially bounded in $\mathcal{L}(\mathbb{W})$. 
     \end{proof}
     \begin{lem}
        The mapping $\mathscr{G}_{\alpha,\beta}^\nu(\cdot):[0,\infty)\to\mathcal{\mathbb{W}}$ is strongly continuous.  
    \end{lem}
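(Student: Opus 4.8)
The plan is to establish strong continuity in two stages: continuity on $(0,\infty)$, which is routine, and the limit $\mathscr{G}_{\alpha,\beta}^\nu(t)z\to z$ as $t\to0^+$ for every $z\in\mathbb{W}$, which is the substantive part. For $t>0$ I would argue norm continuity directly from the representation \eqref{resolvent}. On the rays $\Gamma^{1}_{\tilde r,\frac{\nu\pi}{2}}$ and $\Gamma^{3}_{\tilde r,\frac{\nu\pi}{2}}$ one has $\mathrm{Re}(\lambda)=-s\sin(\frac{\nu\pi}{2})<0$, so the bound $\|\widehat{\mathscr{G}}_{\alpha,\beta}^\nu(\lambda)\|_{\mathcal{L}(\mathbb{W})}\le \tilde M/|\lambda|$ coming from \eqref{re} shows that, for $t$ in a compact subset $[a,b]\subset(0,\infty)$, the integrand is dominated in norm by an integrable function independent of $t$. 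Dominated convergence then yields $\|\mathscr{G}_{\alpha,\beta}^\nu(t_1)-\mathscr{G}_{\alpha,\beta}^\nu(t_2)\|_{\mathcal{L}(\mathbb{W})}\to0$ as $t_2\to t_1$, so $\mathscr{G}_{\alpha,\beta}^\nu(\cdot)$ is even norm continuous on $(0,\infty)$, which is stronger than needed.

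For the limit at the origin I would first treat $z\in D(\mathrm{A})$. Since $\tfrac{1}{2\pi i}\int_{\Gamma_{\tilde r,\frac{\nu\pi}{2}}}\tfrac{e^{\lambda t}}{\lambda}\,d\lambda=1$ for $t>0$ (close the contour to the left and evaluate the residue at $\lambda=0$), I may write $z=\tfrac{1}{2\pi i}\int_{\Gamma_{\tilde r,\frac{\nu\pi}{2}}}\tfrac{e^{\lambda t}}{\lambda}z\,d\lambda$ and subtract to obtain
\[
\mathscr{G}_{\alpha,\beta}^\nu(t)z-z=\frac{1}{2\pi i}\int_{\Gamma_{\tilde r,\frac{\nu\pi}{2}}}\frac{e^{\lambda t}}{\lambda}\big(\lambda\widehat{\mathscr{G}}_{\alpha,\beta}^\nu(\lambda)-\mathrm{I}\big)z\,d\lambda.
\]
From $\widehat{\mathscr{G}}_{\alpha,\beta}^\nu(\lambda)=(\lambda\mathrm{I}-(1+\tfrac{\alpha}{(\lambda+\beta)^{\nu}})\mathrm{A})^{-1}$ one checks the algebraic identity $\lambda\widehat{\mathscr{G}}_{\alpha,\beta}^\nu(\lambda)-\mathrm{I}=(1+\tfrac{\alpha}{(\lambda+\beta)^{\nu}})\widehat{\mathscr{G}}_{\alpha,\beta}^\nu(\lambda)\mathrm{A}$, so for $z\in D(\mathrm{A})$ the integrand gains one extra power of $|\lambda|$: its norm is at most $\tfrac{|e^{\lambda t}|}{|\lambda|}\cdot\tfrac{C_0\tilde M}{|\lambda|}\|\mathrm{A}z\|$, where $C_0$ bounds $|1+\tfrac{\alpha}{(\lambda+\beta)^{\nu}}|$ on the sector. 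The naive estimate at the fixed radius $\tilde r$ only shows this integral stays bounded, not that it is small, as $t\to0^+$; the decisive move is to deform the contour from $\Gamma_{\tilde r,\frac{\nu\pi}{2}}$ to $\Gamma_{1/t,\frac{\nu\pi}{2}}$ (legitimate for $1/t>\tilde r$ by the analyticity of $\widehat{\mathscr{G}}_{\alpha,\beta}^\nu$ on $\Sigma_{\tilde r,\frac{\nu\pi}{2}}$ established in Lemma \ref{lem2.1}, together with Cauchy's theorem over the intervening annular sector) and then substitute $\lambda=\mu/t$. This recasts the integral as $\tfrac{1}{2\pi i}\int_{\Gamma_{1,\frac{\nu\pi}{2}}}\tfrac{e^{\mu}}{\mu}(1+\tfrac{\alpha}{(\mu/t+\beta)^{\nu}})\widehat{\mathscr{G}}_{\alpha,\beta}^\nu(\mu/t)\mathrm{A}z\,d\mu$, whose integrand is bounded in norm by $C_0\tilde M\,t\,|e^{\mu}|\,|\mu|^{-2}\|\mathrm{A}z\|$; hence $\|\mathscr{G}_{\alpha,\beta}^\nu(t)z-z\|\le C_1 t\|\mathrm{A}z\|\to0$.

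Finally I would pass to arbitrary $z\in\mathbb{W}$. Since $\overline{D(\mathrm{A})}=\mathbb{W}$ and, by the previous lemma, $M_0:=\sup_{0\le t\le 1}\|\mathscr{G}_{\alpha,\beta}^\nu(t)\|_{\mathcal{L}(\mathbb{W})}<\infty$, a standard $\varepsilon/3$ argument closes the proof: given $\varepsilon>0$ pick $y\in D(\mathrm{A})$ with $\|z-y\|<\varepsilon$, so that $\|\mathscr{G}_{\alpha,\beta}^\nu(t)z-z\|\le(M_0+1)\varepsilon+\|\mathscr{G}_{\alpha,\beta}^\nu(t)y-y\|$, and then let $t\to0^+$. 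The main obstacles are therefore twofold: justifying and then exploiting the contour rescaling $\lambda=\mu/t$, without which the origin limit does not vanish, and verifying both the normalization $\tfrac{1}{2\pi i}\int_{\Gamma_{\tilde r,\frac{\nu\pi}{2}}}\tfrac{e^{\lambda t}}{\lambda}\,d\lambda=1$ and the admissibility of the deformation; everything else reduces to dominated-convergence bookkeeping built on the resolvent bound \eqref{re}.
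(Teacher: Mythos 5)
Your proof is correct and follows the same overall skeleton as the paper's: norm continuity on $(0,\infty)$ via dominated convergence applied to \eqref{resolvent}, reduction of the behaviour at the origin to $z\in D(\mathrm{A})$ through the normalization $\frac{1}{2\pi i}\int_{\Gamma_{\tilde r,\frac{\nu\pi}{2}}}e^{\lambda t}\lambda^{-1}\,\mathrm{d}\lambda=1$ together with the identity $\lambda\widehat{\mathscr{G}}_{\alpha,\beta}^\nu(\lambda)z-z=\widehat{\mathscr{G}}_{\alpha,\beta}^\nu(\lambda)\left(\frac{\alpha}{(\lambda+\beta)^{\nu}}\mathrm{A}+\mathrm{A}\right)z$, and a closing density/uniform-boundedness argument. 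The one place you genuinely diverge is in proving that the resulting $D(\mathrm{A})$-integral tends to zero. The paper first passes to the limit $t\to0^+$ under the integral sign by dominated convergence, which yields the $t$-independent contour integral $\frac{1}{2\pi i}\int_{\Gamma_{\tilde r,\frac{\nu\pi}{2}}}\lambda^{-1}\widehat{\mathscr{G}}_{\alpha,\beta}^\nu(\lambda)\left(\frac{\alpha}{(\lambda+\beta)^{\nu}}\mathrm{A}+\mathrm{A}\right)w\,\mathrm{d}\lambda$, and then shows this integral vanishes by Cauchy's theorem, replacing $\Gamma_{\tilde r,\frac{\nu\pi}{2}}$ by the arcs $\Gamma_{L,\frac{\nu\pi}{2}}$ and letting $L\to\infty$ as in \eqref{e2.7}. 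You instead deform the contour to radius $1/t$ and rescale $\lambda=\mu/t$, which turns the resolvent bound \eqref{re} into the quantitative estimate $\|\mathscr{G}_{\alpha,\beta}^\nu(t)z-z\|_{\mathbb{W}}\le C\,t\,\|\mathrm{A}z\|_{\mathbb{W}}$. Both routes are legitimate: yours buys an explicit $O(t)$ rate on $D(\mathrm{A})$ (hence Lipschitz behaviour at the origin in the graph norm), at the cost of justifying a $t$-dependent contour deformation inside the sector of analyticity from Lemma \ref{lem2.1}, whereas the paper's two-step argument (dominated convergence, then expanding arcs) is the more standard qualitative version and avoids the change of variables.
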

    \begin{proof}
        It is clear from \eqref{resolvent} and the applications of the Lebesgue dominated convergence theorem that the mapping $\mathscr{G}_{\alpha,\beta}^\nu(\cdot)w$ is continuous for each $t>0$ and every $w\in\mathbb{W}$. Next, we verify the continuity argument at $t=0$. In fact, using the fact $$\frac{1}{2\pi i}\int_{\Gamma_{\tilde{r},\frac{\nu\pi}{2}}}\frac{e^{\lambda t}}{\lambda}\mathrm{d}\lambda=1,$$ for any $0<t\le 1$ and $w\in D(\mathrm{A})$, we can write
        \begin{align*}
           \mathscr{G}_{\alpha,\beta}^\nu(t)w-w&=\frac{1}{2\pi i}\int_{\Gamma_{\tilde{r},\frac{\nu\pi}{2}}}\left(e^{\lambda t}\widehat{\mathscr{G}}_{\alpha,\beta}^\nu(\lambda)w-\frac{e^{\lambda t}}{\lambda}w\right)\mathrm{d}\lambda\nonumber\\&=\frac{1}{2\pi i}\int_{\Gamma_{\tilde{r},\frac{\nu\pi}{2}}}e^{\lambda t}\lambda^{-1}\widehat{\mathscr{G}}_{\alpha,\beta}^\nu(\lambda)\left(\frac{\alpha}{(\lambda+\beta)^{\nu}}\mathrm{A}+\mathrm{A}\right)w\mathrm{d}\lambda.
        \end{align*}
        Using the bound in \eqref{re}, we obtain
        \begin{align*}
            \left\|e^{\lambda t}\lambda^{-1}\widehat{\mathscr{G}}_{\alpha,\beta}^\nu(\lambda)\left(\frac{\alpha}{(\lambda+\beta)^{\nu}}\mathrm{A}+\mathrm{A}\right)w\right\|_{\mathbb{W}}\le e^{r}C\left(\frac{|\alpha|}{|\lambda+\beta|^\nu|\lambda|^{2}}+\frac{1}{|\lambda|^2}\right)||w||_{1},
        \end{align*}
        for $\lambda\in\Gamma_{\tilde{r},\frac{\nu\pi}{2}}$, where $||\cdot||_{1}$ denotes the graph norm. The above fact and the Lebesgue dominated convergence theorem imply that
        \begin{align*}
            \lim_{t\to 0^+}\left(\mathscr{G}_{\alpha,\beta}^\nu(t)w-w\right)=\frac{1}{2\pi i}\int_{\Gamma_{\tilde{r},\frac{\nu\pi}{2}}}\lambda^{-1}\widehat{\mathscr{G}}_{\alpha,\beta}^\nu(\lambda)\left(\frac{\alpha}{(\lambda+\beta)^{\nu}}\mathrm{A}+\mathrm{A}\right)w\mathrm{d}\lambda.
        \end{align*}
        Now let $\Gamma_{L,\frac{\nu\pi}{2}}$ be the curve $Le^{i\xi}$ for $-\frac{(1+\nu)\pi}{2}\le\xi\le\frac{(1+\nu)\pi}{2}$. Then we obtain
        \begin{align}\label{e2.7}
       \left\|\int_{\Gamma_{L,\frac{\nu\pi}{2}}}\lambda^{-1}\widehat{\mathscr{G}}_{\alpha,\beta}^\nu(\lambda)\left(\frac{\alpha}{(\lambda+\beta)^{\nu}}\mathrm{A}+\mathrm{A}\right)w\mathrm{d}\lambda\right\|_{\mathbb{W}}\le C\frac{\nu\pi}{2}\left(\frac{|\alpha|}{(L-\beta)^\nu L} +\frac{1}{L}\right)|w||_{1}.
        \end{align}
        Also, it follows from the Cauchy's theorem 
        \begin{align*}
            &\frac{1}{2\pi i}\int_{\Gamma_{\tilde{r},\frac{\nu\pi}{2}}}\lambda^{-1}\widehat{\mathscr{G}}_{\alpha,\beta}^\nu(\lambda)\left(\frac{\alpha}{(\lambda+\beta)^{\nu}}\mathrm{A}+\mathrm{A}\right)w\mathrm{d}\lambda\\&=\lim_{L\to\infty}\frac{1}{2\pi i}\int_{\Gamma_{L,\frac{\nu\pi}{2}}}\lambda^{-1}\widehat{\mathscr{G}}_{\alpha,\beta}^\nu(\lambda)\left(\frac{\alpha}{(\lambda+\beta)^{\nu}}\mathrm{A}+\mathrm{A}\right)w\mathrm{d}\lambda.
        \end{align*}
        Combining this equality with the inequality \eqref{e2.7}, we arrive $$\lim_{t\to 0^+}\left(\mathscr{G}_{\alpha,\beta}^\nu(t)w-w\right)=0 \ \mbox{for all}\ x\in D(\mathrm{A}).$$
Therefore, the mapping $\mathscr{G}_{\alpha,\beta}^\nu(\cdot)w$ is continuous on $\mathcal{L}(\mathbb{W})$ as $D(\mathrm{A})$ is dense and $\mathscr{G}_{\alpha,\beta}^\nu(\cdot)$ bounded on $[0,1]$.
\end{proof}
Using Lemma \ref{lem2.1} and the properties of the Laplace transformation we deduce that $\mathscr{G}_{\alpha,\beta}^\nu(t)w$ satisfies the resolvent equation \eqref{REq} for all $w\in D(\mathrm{A})$ and $t\geq 0.$ Thus, $(\mathscr{G}_{\alpha,\beta}^\nu(t))_{t\ge 0}$ is the $(\alpha,\beta,\nu)$-\emph{resolvent family} generated by $\mathrm{A}.$ Let us suppose $\sup_{t\in[0,T]}\|\mathscr{G}_{\alpha,\beta}^\nu(t)\|_{\mathcal{L}(\mathbb{W})}\le N$ throughout the paper.

We now examine some important lemmas related to the resolvent family $(\mathscr{G}_{\alpha,\beta}^\nu(t))_{t\ge 0}$, which are essential to develop our results. The proofs of these lemmas closely follow the techniques outlined in \cite{SAAK2024,JIEA2011}. 
\begin{lem}\label{lem2.7}
	The operator $\mathscr{G}_{\alpha,\beta}^\nu(t)$ is continuous in $\mathcal{L}(\mathbb{W})$ for $t>0$.
\end{lem}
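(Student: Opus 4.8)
The plan is to prove the slightly stronger fact that $\mathscr{G}_{\alpha,\beta}^\nu(\cdot)$ is differentiable in the operator-norm topology on $(0,\infty)$, from which norm-continuity follows at once. Starting from the contour representation \eqref{resolvent}, the natural candidate for the derivative is obtained by differentiating under the integral sign,
\begin{align*}
\frac{\mathrm{d}}{\mathrm{d}t}\mathscr{G}_{\alpha,\beta}^\nu(t)=\frac{1}{2\pi i}\int_{\Gamma_{\tilde{r},\frac{\nu\pi}{2}}}\lambda e^{\lambda t}\widehat{\mathscr{G}}_{\alpha,\beta}^\nu(\lambda)\,\mathrm{d}\lambda,\qquad t>0,
\end{align*}
so the whole argument reduces to justifying this step, i.e.\ to showing that the integral on the right converges absolutely in $\mathcal{L}(\mathbb{W})$, uniformly for $t$ in compact subintervals of $(0,\infty)$.

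The key estimate I would use is the resolvent bound \eqref{re}. Since $\|\lambda\widehat{\mathscr{G}}_{\alpha,\beta}^\nu(\lambda)\|_{\mathcal{L}(\mathbb{W})}\le\tilde{M}$, the extra factor of $\lambda$ produced by differentiation is exactly absorbed, leaving
\begin{align*}
\left\|\lambda e^{\lambda t}\widehat{\mathscr{G}}_{\alpha,\beta}^\nu(\lambda)\right\|_{\mathcal{L}(\mathbb{W})}\le\tilde{M}\,e^{\mathrm{Re}(\lambda)t}.
\end{align*}
On the unbounded rays $\Gamma^{1}_{\tilde{r},\frac{\nu\pi}{2}}$ and $\Gamma^{3}_{\tilde{r},\frac{\nu\pi}{2}}$ one has $\lambda=se^{\pm i(\pi/2+\nu\pi/2)}$, hence $\mathrm{Re}(\lambda)=-s\sin(\tfrac{\nu\pi}{2})$, and since $0<\nu<1$ the factor $\sin(\tfrac{\nu\pi}{2})$ is strictly positive. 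Thus for every fixed $t>0$ the integral $\int_{\tilde r}^\infty\tilde{M}\,e^{-s\sin(\frac{\nu\pi}{2})t}\,\mathrm{d}s$ is finite, and for $t$ ranging over any interval $[a,b]\subset(0,\infty)$ the integrand is dominated by the fixed integrable function $\tilde{M}e^{-s\sin(\nu\pi/2)a}$; on the compact arc $\Gamma^{2}_{\tilde{r},\frac{\nu\pi}{2}}$ the bound is trivial. This provides a $t$-uniform integrable majorant on all of $\Gamma_{\tilde{r},\frac{\nu\pi}{2}}$.

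With this majorant in hand, differentiation under the integral sign is legitimate, the displayed derivative exists in $\mathcal{L}(\mathbb{W})$ for every $t>0$, and, by the same dominated-convergence argument applied to the continuity of $t\mapsto\lambda e^{\lambda t}$, it is itself norm-continuous on $(0,\infty)$. Hence $\mathscr{G}_{\alpha,\beta}^\nu(\cdot)$ is continuously differentiable, and in particular continuous, in $\mathcal{L}(\mathbb{W})$ for $t>0$, which is the assertion of the lemma.

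I expect the only real obstacle to be the control of the extra power of $\lambda$ introduced by the $t$-derivative: it is precisely the decay $\|\widehat{\mathscr{G}}_{\alpha,\beta}^\nu(\lambda)\|\le\tilde{M}/|\lambda|$ coming from \eqref{re}, combined with the strict negativity of $\mathrm{Re}(\lambda)$ on the lateral rays (guaranteed by $0<\nu<1$), that rescues the convergence, whereas the naive $t$-derivative of the bare contour integral would diverge. Should one wish to avoid differentiation altogether and prove only continuity, an equivalent route is to estimate $\mathscr{G}_{\alpha,\beta}^\nu(t)-\mathscr{G}_{\alpha,\beta}^\nu(t_0)=\frac{1}{2\pi i}\int_{\Gamma_{\tilde{r},\frac{\nu\pi}{2}}}\left(e^{\lambda t}-e^{\lambda t_0}\right)\widehat{\mathscr{G}}_{\alpha,\beta}^\nu(\lambda)\,\mathrm{d}\lambda$ and pass to the limit $t\to t_0$ by dominated convergence, again using $\|\widehat{\mathscr{G}}_{\alpha,\beta}^\nu(\lambda)\|\le\tilde{M}/|\lambda|$ together with the exponential decay on the rays as the dominating bound.
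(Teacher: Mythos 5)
Your argument is correct and is essentially the standard one the paper has in mind: the paper does not print a proof of this lemma (it defers to the techniques of \cite{SAAK2024,JIEA2011}), but those techniques are exactly what you use — the contour representation \eqref{resolvent}, the bound \eqref{re}, the strictly negative real part $\mathrm{Re}(\lambda)=-s\sin(\tfrac{\nu\pi}{2})$ on the lateral rays, and dominated convergence — and your version even yields the slightly stronger conclusion that $\mathscr{G}_{\alpha,\beta}^\nu(\cdot)$ is continuously differentiable in operator norm on $(0,\infty)$. The only quibble is a point of commentary, not of correctness: for fixed $t>0$ the exponential decay on the rays alone already makes the differentiated integral converge (the $\tilde M/|\lambda|$ decay is what fails to help only as $t\to0^+$), so the second, non-differentiated route you sketch at the end is the minimal argument for the lemma as stated.
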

\begin{lem}\label{lem2.6}
	If $\mathrm{R}(\lambda_0,\mathrm{A})$ is compact for some $\lambda_0\in\rho(\mathrm{A})$, then the operator $\mathscr{G}_{\alpha,\beta}^\nu(t)$ is compact for all $t>0$.
\end{lem}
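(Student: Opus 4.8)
The plan is to exploit the explicit contour representation \eqref{resolvent} together with the factorization \eqref{r1}, using two standard structural facts: compactness of a single resolvent propagates to the whole resolvent set, and the ideal of compact operators is closed under norm limits. First I would upgrade the hypothesis via the resolvent identity $\mathrm{R}(\lambda,\mathrm{A})-\mathrm{R}(\lambda_0,\mathrm{A})=(\lambda_0-\lambda)\mathrm{R}(\lambda,\mathrm{A})\mathrm{R}(\lambda_0,\mathrm{A})$, valid for $\lambda,\lambda_0\in\rho(\mathrm{A})$. Rearranged as $\mathrm{R}(\lambda,\mathrm{A})=\mathrm{R}(\lambda_0,\mathrm{A})+(\lambda_0-\lambda)\mathrm{R}(\lambda,\mathrm{A})\mathrm{R}(\lambda_0,\mathrm{A})$, and using that the set $\mathcal{K}(\mathbb{W})$ of compact operators is a closed two-sided ideal in $\mathcal{L}(\mathbb{W})$, each term on the right is compact, so $(\lambda\mathrm{I}-\mathrm{A})^{-1}$ is compact for every $\lambda\in\rho(\mathrm{A})$, in particular along the contour $\Gamma_{\tilde{r},\frac{\nu\pi}{2}}$.

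Second, I would pass this compactness through the factorization \eqref{r1}. Since $\widehat{\mathscr{G}}_{\alpha,\beta}^\nu(\lambda)=(\lambda\mathrm{I}-\mathrm{A})^{-1}(\mathrm{I}-\frac{\alpha}{(\lambda+\beta)^{\nu}}\mathrm{A}(\lambda\mathrm{I}-\mathrm{A})^{-1})^{-1}$ is the product of the compact operator $(\lambda\mathrm{I}-\mathrm{A})^{-1}$ with the bounded Neumann-series factor (bounded by Lemma \ref{lem2.1}), we get $\widehat{\mathscr{G}}_{\alpha,\beta}^\nu(\lambda)\in\mathcal{K}(\mathbb{W})$ for each $\lambda$ on the contour, and $\lambda\mapsto\widehat{\mathscr{G}}_{\alpha,\beta}^\nu(\lambda)$ is norm-continuous there.

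Third — the step carrying the analytic weight — I would fix $t>0$ and realize $\mathscr{G}_{\alpha,\beta}^\nu(t)$ as a norm limit of compact operators. Truncating the contour at radius $L$, set $\mathscr{G}^L(t):=\frac{1}{2\pi i}\int_{\Gamma_{\tilde{r},\frac{\nu\pi}{2}}\cap\{|\lambda|\le L\}}e^{\lambda t}\widehat{\mathscr{G}}_{\alpha,\beta}^\nu(\lambda)\mathrm{d}\lambda$. Over this bounded piece the integrand is a norm-continuous $\mathcal{K}(\mathbb{W})$-valued map, so $\mathscr{G}^L(t)$ is a norm limit of Riemann sums of compact operators and hence itself compact. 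A tail estimate, exactly as in the exponential-boundedness lemma, yields $\|\mathscr{G}_{\alpha,\beta}^\nu(t)-\mathscr{G}^L(t)\|_{\mathcal{L}(\mathbb{W})}\le\frac{\tilde M}{\pi}\int_L^\infty\frac{e^{-st\sin(\nu\pi/2)}}{s}\mathrm{d}s\to 0$ as $L\to\infty$ for each fixed $t>0$, because on $\Gamma^{1}_{\tilde{r},\frac{\nu\pi}{2}}$ and $\Gamma^{3}_{\tilde{r},\frac{\nu\pi}{2}}$ one has $\mathrm{Re}(\lambda)=-|\lambda|\sin(\nu\pi/2)$ and $\|\widehat{\mathscr{G}}_{\alpha,\beta}^\nu(\lambda)\|\le\tilde M/|\lambda|$ by \eqref{re}, while the arc $\Gamma^{2}_{\tilde{r},\frac{\nu\pi}{2}}$ lies inside the truncation once $L>\tilde r$. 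Since $\mathcal{K}(\mathbb{W})$ is norm-closed, the limit $\mathscr{G}_{\alpha,\beta}^\nu(t)$ is compact.

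The main obstacle I anticipate is precisely this third step: securing \emph{norm} rather than merely strong convergence of $\mathscr{G}^L(t)$ to $\mathscr{G}_{\alpha,\beta}^\nu(t)$, since only norm convergence lets me invoke the closedness of $\mathcal{K}(\mathbb{W})$. This is exactly why the uniform bound \eqref{re} and the strictly negative real part along the oblique rays — which force genuine exponential decay of the integrand for each $t>0$ — are indispensable. Note the argument is inherently restricted to $t>0$, since $\mathscr{G}_{\alpha,\beta}^\nu(0)=\mathrm{I}$ is not compact in infinite dimensions.
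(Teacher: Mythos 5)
Your proposal is correct and is essentially the argument the paper has in mind: the paper omits the proof of this lemma and delegates it to the techniques of its references, which proceed exactly as you do — propagate compactness of $\mathrm{R}(\lambda_0,\mathrm{A})$ to the whole resolvent set via the resolvent identity, deduce compactness of $\widehat{\mathscr{G}}_{\alpha,\beta}^\nu(\lambda)$ from the factorization \eqref{r1}, and realize $\mathscr{G}_{\alpha,\beta}^\nu(t)$ for $t>0$ as a norm limit of compact truncated contour integrals using the decay furnished by \eqref{re}. Your identification of the norm (rather than strong) convergence of the truncations as the load-bearing step is exactly right, and the tail estimate you give closes it.
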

\begin{lem}\label{lem2.6}
	Let $\mathscr{G}_{\alpha,\beta}^\nu(t)$ be compact for $t>0$. Then then the following conditions hold:
	\begin{itemize}
		\item [(a)]$\lim\limits_{h\to 0^+}\|\mathscr{G}_{\alpha,\beta}^\nu(t+h)-\mathscr{G}_{\alpha,\beta}^\nu(h)\mathscr{G}_{\alpha,\beta}^\nu(t)\|_{\mathcal{L}(\mathbb{W})}=0$ for all $t>0$.
		\item [(b)]$\lim\limits_{h\to 0^+}\|\mathscr{G}_{\alpha,\beta}^\nu(t)-\mathscr{G}_{\alpha,\beta}^\nu(h)\mathscr{G}_{\alpha,\beta}^\nu(t-h)\|_{\mathcal{L}(\mathbb{W})}=0$ for all $t>0$.
	\end{itemize} 
\end{lem}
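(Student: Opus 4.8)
The plan is to reduce both limits to two ingredients already available: the operator-norm continuity of $t\mapsto\mathscr{G}_{\alpha,\beta}^\nu(t)$ for $t>0$ (Lemma \ref{lem2.7}), and the elementary fact that the strong convergence $\mathscr{G}_{\alpha,\beta}^\nu(h)\to\mathrm{I}$ as $h\to0^+$ is \emph{uniform on relatively compact subsets} of $\mathbb{W}$. First I would record this auxiliary fact: given a relatively compact $K\subset\mathbb{W}$ and $\varepsilon>0$, cover $K$ by finitely many balls centred at $y_1,\dots,y_m$, and for $y\in K$ estimate $\|(\mathrm{I}-\mathscr{G}_{\alpha,\beta}^\nu(h))y\|$ by the triangle inequality through the nearest $y_i$, using the pointwise convergence at the finitely many $y_i$ and the uniform bound $\sup_{h\in[0,1]}\|\mathscr{G}_{\alpha,\beta}^\nu(h)\|_{\mathcal{L}(\mathbb{W})}\le N$ to control the remainders. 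This yields $\sup_{y\in K}\|(\mathrm{I}-\mathscr{G}_{\alpha,\beta}^\nu(h))y\|\to0$.

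For part (a) I would insert $\mathscr{G}_{\alpha,\beta}^\nu(t)$ and split by the triangle inequality:
\begin{align*}
\|\mathscr{G}_{\alpha,\beta}^\nu(t+h)-\mathscr{G}_{\alpha,\beta}^\nu(h)\mathscr{G}_{\alpha,\beta}^\nu(t)\|_{\mathcal{L}(\mathbb{W})}
&\le \|\mathscr{G}_{\alpha,\beta}^\nu(t+h)-\mathscr{G}_{\alpha,\beta}^\nu(t)\|_{\mathcal{L}(\mathbb{W})} \\
&\quad +\|(\mathrm{I}-\mathscr{G}_{\alpha,\beta}^\nu(h))\mathscr{G}_{\alpha,\beta}^\nu(t)\|_{\mathcal{L}(\mathbb{W})}.
\end{align*}
The first term tends to $0$ directly by the norm continuity of Lemma \ref{lem2.7}. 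For the second term, since $\mathscr{G}_{\alpha,\beta}^\nu(t)$ is compact the set $K=\{\mathscr{G}_{\alpha,\beta}^\nu(t)x:\|x\|\le1\}$ is relatively compact, so
\begin{align*}
\|(\mathrm{I}-\mathscr{G}_{\alpha,\beta}^\nu(h))\mathscr{G}_{\alpha,\beta}^\nu(t)\|_{\mathcal{L}(\mathbb{W})}
=\sup_{y\in K}\|(\mathrm{I}-\mathscr{G}_{\alpha,\beta}^\nu(h))y\|_{\mathbb{W}}\longrightarrow 0
\end{align*}
by the uniform-on-compacts fact, which proves (a).

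For part (b), valid for $0<h<t$, the same splitting gives
\begin{align*}
\|\mathscr{G}_{\alpha,\beta}^\nu(t)-\mathscr{G}_{\alpha,\beta}^\nu(h)\mathscr{G}_{\alpha,\beta}^\nu(t-h)\|_{\mathcal{L}(\mathbb{W})}
&\le \|\mathscr{G}_{\alpha,\beta}^\nu(t)-\mathscr{G}_{\alpha,\beta}^\nu(t-h)\|_{\mathcal{L}(\mathbb{W})} \\
&\quad +\|(\mathrm{I}-\mathscr{G}_{\alpha,\beta}^\nu(h))\mathscr{G}_{\alpha,\beta}^\nu(t-h)\|_{\mathcal{L}(\mathbb{W})},
\end{align*}
and again the first term vanishes by Lemma \ref{lem2.7}. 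The hard part will be the second term: the compact operator $\mathscr{G}_{\alpha,\beta}^\nu(t-h)$ now moves with $h$, so I cannot fix a single compact set in advance. To overcome this I would show that
\begin{align*}
K'=\left\{\mathscr{G}_{\alpha,\beta}^\nu(s)x:\ \|x\|\le1,\ s\in[t/2,t]\right\}
\end{align*}
is relatively compact in $\mathbb{W}$. This follows by a subsequence argument: given $(s_n,x_n)$ with $s_n\in[t/2,t]$ and $\|x_n\|\le1$, pass to a subsequence with $s_n\to s^\star\in[t/2,t]$; Lemma \ref{lem2.7} gives $\|\mathscr{G}_{\alpha,\beta}^\nu(s_n)-\mathscr{G}_{\alpha,\beta}^\nu(s^\star)\|_{\mathcal{L}(\mathbb{W})}\to0$, while compactness of $\mathscr{G}_{\alpha,\beta}^\nu(s^\star)$ extracts a convergent subsequence of $\mathscr{G}_{\alpha,\beta}^\nu(s^\star)x_n$, and the two combine to give a convergent subsequence of $\mathscr{G}_{\alpha,\beta}^\nu(s_n)x_n$. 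Since $\mathscr{G}_{\alpha,\beta}^\nu(t-h)x\in K'$ whenever $0<h\le t/2$ and $\|x\|\le1$, the uniform-on-compacts fact applied to $K'$ yields $\|(\mathrm{I}-\mathscr{G}_{\alpha,\beta}^\nu(h))\mathscr{G}_{\alpha,\beta}^\nu(t-h)\|_{\mathcal{L}(\mathbb{W})}\le\sup_{y\in K'}\|(\mathrm{I}-\mathscr{G}_{\alpha,\beta}^\nu(h))y\|\to0$, completing (b). The only genuinely new step beyond part (a) is thus the relative compactness of $K'$, which packages the interaction between the norm continuity of Lemma \ref{lem2.7} and the pointwise compactness hypothesis.
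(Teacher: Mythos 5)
Your proof is correct, and it follows essentially the same route as the argument the paper defers to via its citations (the paper gives no proof, only a pointer to the standard technique in the cited references): decompose the difference as a norm-continuity term plus $(\mathrm{I}-\mathscr{G}_{\alpha,\beta}^\nu(h))$ applied to a relatively compact set, and use that strong convergence to the identity is uniform on such sets. The extra care you take in part (b) — proving relative compactness of the $h$-dependent family $K'$ via a subsequence argument combining Lemma \ref{lem2.7} with pointwise compactness — is exactly the point that needs attention, and your treatment of it is sound.
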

It is immediate to present the following existence result of the non-homogeneous equation.
\begin{pro}\label{pre2.7}
    Suppose the operator $\mathrm{A}$ is sectorial with angle $\frac{\nu\pi}{2}$ and the function $h:J\to\mathbb{W}$ is continuously differentiable. Then for each $\zeta\in D(\mathrm{A})$, the equation:
\begin{equation}\label{NLEq}
	\left\{
	\begin{aligned}
		w'(t)&=\mathrm{A}w(t)+\int_{0}^{t}\kappa(t-s)\mathrm{A}w(s)\mathrm{d}s+h(t),\ t\in(0,T],\\
		w(0)&=\zeta,
	\end{aligned}
	\right.
\end{equation}
admits a unique \emph{classical solution} which is give by
\begin{align*}
	w(t)&=\mathscr{G}_{\alpha,\beta}^\nu(t)\zeta+\int_{0}^{t}\mathscr{G}_{\alpha,\beta}^\nu(t-s)h(s), t\in J.
\end{align*}
On the other hand, if $\zeta\in\mathbb{W}$ and the function $h\in\mathrm{L}^1(J;\mathbb{W})$, then the above function $w\in C(J;\mathbb{W})$ is called the \emph{mild solution} of the system \eqref{NLEq}.
\end{pro}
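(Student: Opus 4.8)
The plan is to convert the differential problem \eqref{NLEq} into an equivalent Volterra integral equation, check the variation-of-parameters formula directly against the defining resolvent equation \eqref{REq}, and then use the sectoriality of $\mathrm{A}$ to promote the resulting function to a classical solution. First I would integrate \eqref{NLEq} once over $[0,t]$; interchanging $\mathrm{A}$ with the integral by closedness and applying Fubini to the memory term via $1*(\kappa*\mathrm{A}w)=(1*\kappa)*\mathrm{A}w$, one sees that \eqref{NLEq} is equivalent to
$$w(t)=\zeta+\int_0^t a(t-s)\,\mathrm{A}w(s)\,\mathrm{d}s+\int_0^t h(s)\,\mathrm{d}s,\qquad a:=1+1*\kappa.$$
The hypothesis $0<\nu<1$ guarantees $\kappa\in\mathrm{L}^1_{\mathrm{loc}}(\mathbb{R}_+)$, so $a$ is absolutely continuous with $a(0)=1$ and $a'=\kappa$; this integrability is exactly what keeps the convolutions below meaningful while, as noted in the introduction, placing us outside the first-order ($\mathrm{W}^{1,1}_{\mathrm{loc}}$) framework.

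Next I would verify that $w=\mathscr{G}_{\alpha,\beta}^\nu(\cdot)\zeta+\mathscr{G}_{\alpha,\beta}^\nu*h$ solves this integral equation. The resolvent equation \eqref{REq} reads precisely $a*\mathrm{A}\mathscr{G}_{\alpha,\beta}^\nu(\cdot)z=\mathscr{G}_{\alpha,\beta}^\nu(\cdot)z-z$ for $z\in D(\mathrm{A})$. Splitting $a*\mathrm{A}w$ into its $\zeta$- and $h$-contributions, moving $\mathrm{A}$ through the convolution $\mathscr{G}_{\alpha,\beta}^\nu*h$ by closedness, and combining associativity of convolution with this identity, the two memory terms collapse to $\mathscr{G}_{\alpha,\beta}^\nu(\cdot)\zeta-\zeta$ and $\mathscr{G}_{\alpha,\beta}^\nu*h-1*h$, so that the right-hand side reduces to $w$. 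Uniqueness then follows from the Laplace transform: applied to the integral equation it forces $\widehat w(\lambda)=\widehat{\mathscr{G}}_{\alpha,\beta}^\nu(\lambda)\big(\zeta+\widehat h(\lambda)\big)$, and injectivity of the transform pins down $w$.

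Finally, to see that $w$ is classical when $\zeta\in D(\mathrm{A})$ and $h\in C^1$, I would extract the time- and $\mathrm{A}$-regularity from the sectorial estimate \eqref{re}. Since $\mathrm{A}$ is sectorial, $\mathrm{A}\widehat{\mathscr{G}}_{\alpha,\beta}^\nu(\lambda)$ is bounded and $\mathscr{G}_{\alpha,\beta}^\nu(t)$ maps $\mathbb{W}$ into $D(\mathrm{A})$ for $t>0$ with $\|\mathrm{A}\mathscr{G}_{\alpha,\beta}^\nu(\tau)\|$ integrable near $0$; together with property (b) this yields $w(t)\in D(\mathrm{A})$ for every $t$. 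For the forcing term I would write $(\mathscr{G}_{\alpha,\beta}^\nu*h)(t)=\int_0^t\mathscr{G}_{\alpha,\beta}^\nu(s)h(t-s)\,\mathrm{d}s$ and differentiate under the integral using $h\in C^1$ to get $\tfrac{\mathrm{d}}{\mathrm{d}t}(\mathscr{G}_{\alpha,\beta}^\nu*h)(t)=\mathscr{G}_{\alpha,\beta}^\nu(t)h(0)+(\mathscr{G}_{\alpha,\beta}^\nu*h')(t)$; differentiating the integral equation and using $a(0)=1$, $a'=\kappa$ then recovers $w'=\mathrm{A}w+\kappa*\mathrm{A}w+h$ pointwise, i.e. \eqref{NLEq}. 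I expect the real obstacle to be this last step: because $a'=\kappa$ is unbounded at $0$, the differentiation of the memory convolution $a*\mathrm{A}w$ cannot be performed naively and must be justified through the splitting above (or by approximating $\kappa$), while the $D(\mathrm{A})$-valuedness and time-differentiability of $\mathscr{G}_{\alpha,\beta}^\nu*h$ have to be read off from the smoothing bound \eqref{re} rather than from a semigroup law, which $\mathscr{G}_{\alpha,\beta}^\nu$ need not satisfy.
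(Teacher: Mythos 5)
The paper itself offers no proof of Proposition \ref{pre2.7} (it is declared ``immediate'' and rests on the resolvent theory of Pr\"uss \cite{JP1993}), so your outline can only be judged on its own merits. The reduction to the Volterra equation $w=\zeta+a*\mathrm{A}w+1*h$ with $a=1+1*\kappa$, the verification of the variation-of-parameters formula against \eqref{REq}, and the Laplace-transform uniqueness argument are all the standard route and are sound in outline. Two technical points, however, are not right as stated. First, your claim that $\|\mathrm{A}\mathscr{G}_{\alpha,\beta}^\nu(\tau)\|_{\mathcal{L}(\mathbb{W})}$ is \emph{integrable} near $\tau=0$ is false in general: since $\mathrm{A}\widehat{\mathscr{G}}_{\alpha,\beta}^\nu(\lambda)=\bigl(1+\alpha(\lambda+\beta)^{-\nu}\bigr)^{-1}\bigl(\lambda\widehat{\mathscr{G}}_{\alpha,\beta}^\nu(\lambda)-\mathrm{I}\bigr)$ is merely \emph{bounded} on the contour by \eqref{re}, the contour representation yields only $\|\mathrm{A}\mathscr{G}_{\alpha,\beta}^\nu(\tau)\|\lesssim \tau^{-1}$, exactly as for analytic semigroups. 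Consequently the assertion that $(\mathscr{G}_{\alpha,\beta}^\nu*h)(t)\in D(\mathrm{A})$ ``together with property (b)'' does not follow as written; one must either exploit the $C^1$ (hence Lipschitz) regularity of $h$ via the splitting $\int_0^t\mathrm{A}\mathscr{G}_{\alpha,\beta}^\nu(s)[h(t-s)-h(t)]\,\mathrm{d}s+\mathrm{A}(1*\mathscr{G}_{\alpha,\beta}^\nu)(t)h(t)$, in which the difference quotient absorbs the $s^{-1}$ singularity, or invoke the regularity theory for analytic resolvents directly. This is precisely the step that makes the mild solution classical, and it is the one your sketch leaves unproved.

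Second, in the verification of the integral equation you move $\mathrm{A}$ through $\mathscr{G}_{\alpha,\beta}^\nu*h$ ``by closedness,'' but $h$ takes values only in $\mathbb{W}$, not in $D(\mathrm{A})$, so \eqref{REq} cannot be applied pointwise to $h(s)$. The clean way is to pass from the first resolvent equation to its integrated form $\mathrm{A}\bigl(a*\mathscr{G}_{\alpha,\beta}^\nu\bigr)(t)x=\mathscr{G}_{\alpha,\beta}^\nu(t)x-x$ valid for \emph{all} $x\in\mathbb{W}$ (by density of $D(\mathrm{A})$ and closedness of $\mathrm{A}$, or by inverting $\widehat a(\lambda)\mathrm{A}\widehat{\mathscr{G}}_{\alpha,\beta}^\nu(\lambda)=\widehat{\mathscr{G}}_{\alpha,\beta}^\nu(\lambda)-\lambda^{-1}$), and then convolve this bounded-operator identity with $h$. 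With these two repairs your argument closes; as written, the promotion from mild to classical solution --- the only non-routine content of the proposition --- is still open.
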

Let us now introduce and investigate the existence of a mild solution for the semilinear control problem given in \eqref{SEq}. The following restrictions are imposed on the nonlinear function $f(\cdot,\cdot)$:
\begin{itemize}
    \item [\textbf{\textit{(F1)}}]  The function $f(t,\cdot): \mathbb{W}\rightarrow\mathbb{W}$ is continuous for a.e. $t\in J$. The map $t\mapsto f(t,w) $ is strongly measurable on $J$ for each $ w\in \mathbb{W}$. 
    \item [\textbf{\textit{(F2)}}] There exists a function $\gamma\in\mathrm{L}^1(J;\mathbb{R}^+)$ such that 
			$$\|f(t,w)\|_{\mathbb{W}}\le\gamma(t), \ \mbox{for a.e.}\ t\in J \ \mbox{and for all} \ w\in\mathbb{W}.$$
\end{itemize}
\begin{Def}
    For a given $u\in\mathrm{L}^{2}(J;\mathbb{U})$, a function $w\in C(J;\mathbb{W})$ is called a mild solution of the problem \eqref{SEq}, if it is satisfy the following  integral equation:
    \begin{align}\label{Seq1}
	w(t)&=\mathscr{G}_{\alpha,\beta}^\nu(t)\zeta+\int_{0}^{t}\mathscr{G}_{\alpha,\beta}^\nu(t-s)\left[\mathrm{B}u(s)+f(s,w(s)\right]\mathrm{d}s, t\in J.
\end{align}
\end{Def}
Throughout this text, we assume that the operator $\mathrm{A}$ is a sectorial operator with angle $\frac{\nu\pi}{2}$ and the operator $\mathrm{R}(\lambda_0,\mathrm{A})$ is compact for some $\lambda_0\in\rho(\mathrm{A})$.

Existence of a mild solution for the system \eqref{SEq} is presented below by using the \emph{Schauder fixed point theorem}. 
\begin{theorem}\label{thm2.9}
    Let Assumptions \textbf{\textit{(F1)}}-\textbf{\textit{(F2)}} hold true. Then for each $u\in\mathrm{L}^{2}(J;\mathbb{U})$, the system \eqref{SEq} has at least one mild solution on $J$.
\end{theorem}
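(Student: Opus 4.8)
The plan is to recast the integral identity \eqref{Seq1} as a fixed point problem and apply the Schauder fixed point theorem. For the fixed control $u\in\mathrm{L}^2(J;\mathbb{U})$, define the solution operator $\Phi:C(J;\mathbb{W})\to C(J;\mathbb{W})$ by
\[
(\Phi w)(t)=\mathscr{G}_{\alpha,\beta}^\nu(t)\zeta+\int_0^t\mathscr{G}_{\alpha,\beta}^\nu(t-s)\left[\mathrm{B}u(s)+f(s,w(s))\right]\mathrm{d}s,
\]
so that mild solutions of \eqref{SEq} are exactly the fixed points of $\Phi$. First I would verify that $\Phi$ is well defined, i.e. maps $C(J;\mathbb{W})$ into itself, using the strong continuity of the resolvent family and the dominated convergence theorem (the integrand being dominated via \textbf{\textit{(F2)}}). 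The uniform growth bound \textbf{\textit{(F2)}} is decisive here: since $\|f(s,w(s))\|_{\mathbb{W}}\le\gamma(s)$ independently of $w$, Hölder's inequality gives
\[
\|(\Phi w)(t)\|_{\mathbb{W}}\le N\|\zeta\|_{\mathbb{W}}+N\left(M\sqrt{T}\,\|u\|_{\mathrm{L}^2(J;\mathbb{U})}+\|\gamma\|_{\mathrm{L}^1(J)}\right)=:R,
\]
a bound independent of $w$. Hence $\Phi$ maps the closed, bounded and convex ball $\mathcal{B}_R=\{w\in C(J;\mathbb{W}):\|w\|_{C(J;\mathbb{W})}\le R\}$ into itself.

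Second, I would show $\Phi$ is continuous on $\mathcal{B}_R$. If $w_n\to w$ in $C(J;\mathbb{W})$, then \textbf{\textit{(F1)}} gives $f(s,w_n(s))\to f(s,w(s))$ for a.e. $s$, while \textbf{\textit{(F2)}} furnishes the integrable dominating function $\gamma$; the dominated convergence theorem then yields $\sup_{t\in J}\|(\Phi w_n)(t)-(\Phi w)(t)\|_{\mathbb{W}}\le N\int_0^T\|f(s,w_n(s))-f(s,w(s))\|_{\mathbb{W}}\,\mathrm{d}s\to 0$.

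Third, I would establish relative compactness of $\Phi(\mathcal{B}_R)$ in $C(J;\mathbb{W})$ by the Arzelà–Ascoli theorem, i.e. equicontinuity on $J$ together with relative compactness of $\{(\Phi w)(t):w\in\mathcal{B}_R\}$ in $\mathbb{W}$ for each fixed $t$. Equicontinuity follows by estimating $\|(\Phi w)(t_2)-(\Phi w)(t_1)\|_{\mathbb{W}}$ for $t_1<t_2$: the term $\mathscr{G}_{\alpha,\beta}^\nu(\cdot)\zeta$ is equicontinuous since it is independent of $w$, and the convolution term is handled by splitting $\int_0^{t_2}=\int_0^{t_1-\eta}+\int_{t_1-\eta}^{t_2}$, invoking the operator-norm continuity of $\mathscr{G}_{\alpha,\beta}^\nu(\cdot)$ away from the origin (Lemma \ref{lem2.7}) on the first piece and the uniform $\mathrm{L}^1$-bound on the short pieces, all uniformly in $w\in\mathcal{B}_R$.

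The pointwise relative compactness is where I expect the main obstacle: the classical $C_0$-semigroup factorization $\mathscr{G}(t-s)=\mathscr{G}(\epsilon)\mathscr{G}(t-s-\epsilon)$ is unavailable because a resolvent family obeys no semigroup law. Instead, for $0<\epsilon<t$ I would introduce the approximants
\[
(\Phi_\epsilon w)(t)=\mathscr{G}_{\alpha,\beta}^\nu(t)\zeta+\mathscr{G}_{\alpha,\beta}^\nu(\epsilon)\int_0^{t-\epsilon}\mathscr{G}_{\alpha,\beta}^\nu(t-\epsilon-s)\left[\mathrm{B}u(s)+f(s,w(s))\right]\mathrm{d}s.
\]
The inner integral ranges over a bounded subset of $\mathbb{W}$ uniformly in $w$ (again by \textbf{\textit{(F2)}} and Hölder), and $\mathscr{G}_{\alpha,\beta}^\nu(\epsilon)$ is compact by the compactness lemma, which applies since $\mathrm{R}(\lambda_0,\mathrm{A})$ is compact; hence $\{(\Phi_\epsilon w)(t):w\in\mathcal{B}_R\}$ is relatively compact for each $\epsilon>0$. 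It then remains to verify $\sup_{w\in\mathcal{B}_R}\|(\Phi w)(t)-(\Phi_\epsilon w)(t)\|_{\mathbb{W}}\to 0$ as $\epsilon\to0^+$: the contribution of $\int_{t-\epsilon}^t$ is small by the $\mathrm{L}^1$-bound, while the difference on $[0,t-\epsilon]$ is governed by the approximate semigroup identity $\|\mathscr{G}_{\alpha,\beta}^\nu(\tau+\epsilon)-\mathscr{G}_{\alpha,\beta}^\nu(\epsilon)\mathscr{G}_{\alpha,\beta}^\nu(\tau)\|_{\mathcal{L}(\mathbb{W})}\to 0$ of Lemma \ref{lem2.6}, combined with dominated convergence (treating the endpoint $\tau\to 0^+$, where uniformity is delicate, by an additional short cut). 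Consequently each $\{(\Phi w)(t)\}$ is totally bounded, hence relatively compact, and Arzelà–Ascoli gives compactness of $\Phi(\mathcal{B}_R)$. The crux of the whole argument is precisely this replacement of the absent semigroup property by the approximate identities of Lemma \ref{lem2.6}, in the spirit of \cite{SAAK2024,JIEA2011}. Schauder's fixed point theorem then delivers a fixed point $w\in\mathcal{B}_R$, which is the required mild solution.
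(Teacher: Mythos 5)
Your proposal is correct and follows essentially the same route as the paper: the same fixed-point operator, the same invariant ball $Q_r$, the same continuity argument via dominated convergence, and the same compactness scheme replacing the missing semigroup law by the approximate identities of Lemma \ref{lem2.6} applied to an $\epsilon$-shifted approximant before invoking Arzel\`a--Ascoli and Schauder. The only cosmetic difference is that you leave $\mathscr{G}_{\alpha,\beta}^\nu(t)\zeta$ outside the compact factor $\mathscr{G}_{\alpha,\beta}^\nu(\epsilon)$ (harmless, since it is a single fixed point of $\mathbb{W}$), whereas the paper folds it inside; your explicit flagging of the endpoint delicacy in the $[0,t-\epsilon]$ estimate is, if anything, more careful than the paper's treatment.
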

\begin{proof}
    For a given $u\in\mathrm{L}^{2}(J;\mathbb{U})$, we define an operator $\mathcal{P}:C(J;\mathbb{W})\to C(J;\mathbb{W})$ such that
		\begin{align*}
			(\mathcal{P}w)(t)=\mathscr{G}_{\alpha,\beta}^\nu(t)\zeta+\int_{0}^{t}\mathscr{G}_{\alpha,\beta}^\nu(t-s)\left[\mathrm{B}u(s)+f(s,w(s))\right]\mathrm{d}s, t\in J.
		\end{align*}
		It is clear that the system \eqref{SEq} possesses a mild solution, if the operator $\mathcal{P}$ admits a fixed point. The proof of the operator $\mathcal{P}$ has a fixed point is divided into several steps.
			\vskip 0.1in 
		\noindent\textbf{Step (1): } In first step, we will show that the map $\mathcal{P}$ maps to the set $Q_r=\{w\in C(J;\mathbb{W}): \left\|x\right\|_{C(J;\mathbb{W})}\le r\}$ into itself for some $r>0$. For this, let us estimate
\begin{align*}
    \|(\mathcal{P}w)(t)\|_{\mathbb{W}}&\le\|\mathscr{G}_{\alpha,\beta}^\nu(t)\zeta\|_{\mathbb{W}}+\int_{0}^{t}\|\mathscr{G}_{\alpha,\beta}^\nu(t-s)\left[\mathrm{B}u(s)+f(s,w(s))\right]\|_{\mathbb{W}}\mathrm{d}s\nonumber\\&\le N\|\zeta\|_{\mathbb{W}}+NM\int_0^t\|u(s)\|_{\mathbb{U}}\mathrm{d}s+N\int_0^t \gamma(s)\mathrm{d}s\nonumber\\&\le N\|\zeta\|_{\mathbb{W}}+NMT^{1/2}\|u\|_{\mathrm{L}^2(J;\mathbb{U})}+N\|\gamma\|_{\mathrm{L}^1(J;\mathbb{R}^+)}=r.
\end{align*}
From the above inequality, we infer that there exists an $r>0$ such that $\mathcal{P}(Q_r)\subseteq Q_r$. 
\vskip 0.1in 
		\noindent\textbf{Step (2): } In this step, we claim that the operator $\mathcal{P}$ is continuous. To accomplish the claim, we consider a sequence $\{{w}^n\}^\infty_{n=1}\subseteq Q_r$ such that ${w}^n\rightarrow {w}\ \mbox{in}\ Q_r,$ that is,
		$$\lim\limits_{n\rightarrow \infty}\left\|w^n-w\right\|_{C(J;\mathbb{W})}=0.$$
Using Assumption \textbf{\textit{(F1)}}-\textbf{\textit{(F2)}} along with Lebesgue's dominate convergence theorem, we obtain 
        \begin{align*}
	\left\|(\mathcal{P}w^{n})(t)-(\mathcal{P}w)(t)\right\|_{\mathbb{W}}&\leq\left\|\int_{0}^{t}\mathscr{G}_{\alpha,\beta}^\nu(t-s)\left[f(s, w^n(s))-f(s,w(s))\right]\mathrm{d}s\right\|_{\mathbb{W}}\nonumber\\&\le N\int_{0}^{t}\left\|f(s,w^{n}(s))-f(s,w(s))\right\|_{\mathbb{W}}\mathrm{d}s\nonumber\\&\to 0 \ \text{ as }\ n\to\infty,
\end{align*}
for each $t\in J$. Hence, the map $\mathcal{P}$ is continuous.
\vskip 0.1in 
		\noindent\textbf{Step (3): } Finally, we show that the operator $\mathcal{P}$ is compact. To complete this, we first prove that $\mathcal{P}(Q_r)$ is equicontinuous. To proceed this, let $0\le t_1\le t_2\le T$ and any $w\in Q_r$, we calculate
        \begin{align*}
	&\left\|(\mathcal{P}w)(t_2)-(\mathcal{P}w)(t_1)\right\|_{\mathbb{W}}\nonumber\\&\leq\left\|\left[\mathscr{G}_{\alpha,\beta}^\nu(t_2)-\mathscr{G}_{\alpha,\beta}^\nu(t_1)\right]\zeta\right\|_{\mathbb{W}}+\left\|\int_{t_{1}}^{t_{2}}\mathscr{G}_{\alpha,\beta}^\nu(t_2-s)\left[\mathrm{B}u(s)+f(s,w(s))\right]\mathrm{d}s\right\|_{\mathbb{W}}\nonumber\\&\quad+\left\|\int_{0}^{t_{1}}\left[\mathscr{G}_{\alpha,\beta}^\nu(t_2-s)-\mathscr{G}_{\alpha,\beta}^\nu(t_1-s)\right]\left[\mathrm{B}u(s)+f(s,w(s))\right]\mathrm{d}s\right\|_{\mathbb{W}}\nonumber\\&\leq\left\|\left[\mathscr{G}_{\alpha,\beta}^\nu(t_2)-\mathscr{G}_{\alpha,\beta}^\nu(t_1)\right]\zeta\right\|_{\mathbb{W}}+NM(t_2-t_1)^{1/2}\|u\|_{\mathrm{L}^2([t_1,t_2];\mathbb{U})}+N\int_{t_1}^{t_2}\gamma(s)\mathrm{d}s\nonumber\\&\quad+\int_{0}^{t_{1}}\left\|\left[\mathscr{G}_{\alpha,\beta}^\nu(t_2-s)-\mathscr{G}_{\alpha,\beta}^\nu(t_1-s)\right]\left[\mathrm{B}u(s)+f(s,w(s))\right]\right\|_{\mathbb{W}}\mathrm{d}s.
\end{align*}
	If $t_1=0$, then we get
\begin{align*}
	\lim_{t_2\to 0^+}\left\|(\mathcal{P}w)(t_2)-(\mathcal{P}w)(t_1)\right\|_{\mathbb{W}}=0,\ \mbox{ uniformly for}\  w\in Q_r.
\end{align*}
For given $\epsilon>0$, let us take  $\epsilon<t_1<T$, we have
\begin{align}\label{e4.8}
	&\left\|(\mathcal{P}w)(t_2)-(\mathcal{P}w (t_1)\right\|_{\mathbb{W}} \nonumber\\&\leq\left\|\left[\mathscr{G}_{\alpha,\beta}^\nu(t_2)-\mathscr{G}_{\alpha,\beta}^\nu(t_1)\right]\zeta\right\|_{\mathbb{W}}+NM(t_2-t_1)^{1/2}\|u\|_{\mathrm{L}^2([t_1,t_2];\mathbb{U})}+N\int_{t_1}^{t_2}\gamma(s)\mathrm{d}s\nonumber\\&\quad+\int_{0}^{t_{1-\epsilon}}\left\|\left[\mathscr{G}_{\alpha,\beta}^\nu(t_2-s)-\mathscr{G}_{\alpha,\beta}^\nu(t_1-s)\right]\left[\mathrm{B}u(s)+f(s,w(s))\right]\right\|_{\mathbb{W}}\mathrm{d}s\nonumber\\&\quad+\int_{t_1-\epsilon}^{t_{1}}\left\|\left[\mathscr{G}_{\alpha,\beta}^\nu(t_2-s)-\mathscr{G}_{\alpha,\beta}^\nu(t_1-s)\right]\left[\mathrm{B}u(s)+f(s,w(s))\right]\right\|_{\mathbb{W}}\mathrm{d}s\nonumber\\&\leq\left\|\left[\mathscr{G}_{\alpha,\beta}^\nu(t_2)-\mathscr{G}_{\alpha,\beta}^\nu(t_1)\right]\zeta\right\|_{\mathbb{W}}+NM(t_2-t_1)^{1/2}\|u\|_{\mathrm{L}^2([t_1,t_2];\mathbb{U})}+N\int_{t_1}^{t_2}\gamma(s)\mathrm{d}s\nonumber\\&\quad+\sup_{s\in[0,t_1-\epsilon]}\left\|\mathscr{G}_{\alpha,\beta}^\nu(t_2-s)-\mathscr{G}_{\alpha,\beta}^\nu(t_1-s)\right\|_{\mathcal{L}(\mathbb{W})}\int_{0}^{t_{1}-\epsilon}\left\|\left[\mathrm{B}u(s)+f(s,w(s))\right]\right\|_{\mathbb{W}}\mathrm{d}s\nonumber\\&\quad+2NM\epsilon^{1/2}\|u\|_{\mathrm{L}^2([t_1-\epsilon, t_1];\mathbb{U})}+2N\int_{t_1-\epsilon}^{t_{1}}\gamma(s)\mathrm{d}s.
\end{align} 
Using the continuity of $\mathscr{G}_{\alpha,\beta}^\nu(t)$ for $t>0$ under the uniform operator topology and  the fact that $\epsilon$ can be chosen arbitrarily small, the right hand side of the expression \eqref{e4.8} converges to zero as $|t_2-t_1| \rightarrow 0$. Consequently, $\mathcal{P}(Q_r)$ is equicontinuous.

Furthermore, we assert that for each $t\in J$, the set $\mathfrak{A}(t)=\{(\mathcal{P}w)(t):w\in Q_r\}$ is relatively compact. When $t=0$, it is straightforward to verify the set  $\mathfrak{A}(t)$  is relatively compact in $Q_r$. Now for a fixed  $ 0<t\leq T$ and let $\eta$ be given with $ 0<\eta<t$. We define
\begin{align*}
	(\mathcal{P}^{\eta}w)(t)=&\mathscr{G}_{\alpha,\beta}^\nu(\eta)\Big[\mathscr{G}_{\alpha,\beta}^\nu(t-\eta)\zeta+\int_{0}^{t-\eta}\mathscr{G}_{\alpha,\beta}^\nu(t-s-\eta)\left[\mathrm{B}u(s)+f(s,w(s))\right]\mathrm{d}s\Big].
\end{align*}
	Thus, the set  $\mathfrak{A}_{\eta}(t)=\{(\mathcal{P}^{\eta}w)(t):w\in Q_r\}$ is relatively compact in $\mathbb{W}$, which follows from the compactness of the operator $\mathscr{G}_{\alpha,\beta}^\nu(\eta)$ (since $\mathrm{R}(\lambda_0,\mathrm{A})$ is compact for some $\lambda_0\in\rho(\mathrm{A})$). As a result, there exist finitely many elements $ w_{i}$'s, for $i=1,\dots, n $ in $ \mathbb{W} $ such that 
		\begin{align*}
	\mathfrak{A}_{\eta}(t) \subset \bigcup_{i=1}^{n}\mathcal{S}_{w_i}(\varepsilon/2),
	\end{align*}
	for some $\varepsilon>0$. Using Lemma \ref{lem2.6}, we can choose small $\eta>0$ such that 
	\begin{align*}
		&\left\|(\mathcal{P}w)(t)-(\mathcal{P}^{\eta}w)(t)\right\|_{\mathbb{W}}\nonumber\\&\le\left\|\left[\mathscr{G}_{\alpha,\beta}^\nu(t)-\mathscr{G}_{\alpha,\beta}^\nu(\eta)\mathscr{G}_{\alpha,\beta}^\nu(t-\eta)\right]\zeta\right\|_{\mathbb{W}}+\int_{t-\eta}^{t}\|\mathscr{G}_{\alpha,\beta}^\nu(t-s)\left[\mathrm{B}u(s)+f(s,w(s))\right]\|_{\mathbb{W}}\mathrm{d}s\nonumber\\&\quad+\int_{0}^{t-\eta}\|\left[\mathscr{G}_{\alpha,\beta}^\nu(t-s)-\mathscr{G}_{\alpha,\beta}^\nu(\eta)\mathscr{G}_{\alpha,\beta}^\nu(t-s-\eta)\right]\left[\mathrm{B}u(s)+f(s,w(s))\right]\|_{\mathbb{W}}\mathrm{d}s\nonumber\\&\le\left\|\left[\mathscr{G}_{\alpha,\beta}^\nu(t)-\mathscr{G}_{\alpha,\beta}^\nu(\eta)\mathscr{G}_{\alpha,\beta}^\nu(t-\eta)\right]\zeta\right\|_{\mathbb{W}}+NM\eta^{1/2}\|u\|_{\mathrm{L}^{2}([t,t-\eta];\mathbb{U})}+N\int_{t-\eta}^{t}\gamma(s)\mathrm{d}s\\&\quad+\int_{0}^{t-\eta}\|\left[\mathscr{G}_{\alpha,\beta}^\nu(t-s)-\mathscr{G}_{\alpha,\beta}^\nu(\eta)\mathscr{G}_{\alpha,\beta}^\nu(t-s-\eta)\right]\left[\mathrm{B}u(s)+f(s,w(s))\right]\|_{\mathbb{W}}\mathrm{d}s\le \frac{\varepsilon}{2}.
	\end{align*}
	Therefore $$\mathfrak{A}(t)\subset \bigcup_{i=1}^{n}\mathcal{S}_{w_i}(\varepsilon ).$$
	Thus, the fact implies that the set $\mathfrak{A}(t)$ is relatively compact in $ \mathbb{W}$ for each $t\in [0,T]$.
	Consequently, invoking the Arzelà-Ascoli theorem, we deduce the compactness of the operator  $ \mathcal{P}$. 
	
    Finally, by utilizing the \emph{Schauder fixed point theorem}, we conclude that the operator $\mathcal{P}$  admits a fixed point in $Q_{r}$, which is a mild solution of the system \eqref{SEq}.
\end{proof}
We now replace the condition \textbf{\textit{(F2)}} on the nonlinear function $f(\cdot,\cdot)$ by the following weaker condition:
\begin{itemize}
    \item [\textbf{\textit{(F3)}}] For each $r>0$, there exists a positive function $\gamma_r\in\mathrm{L}^1(J;\mathbb{R}^+)$ such that 
			$$\|f(t,w)\|_{\mathbb{W}}\le\gamma_r(t) \ \mbox{a.e.}\ t\in J \ \mbox{for all}\ w\in\mathbb{W} \ \mbox{with}\ \|w\|_{\mathbb{W}}\le r.$$
\end{itemize}
\begin{theorem}\label{thm2.10}
    Assume conditions \textbf{\textit{(F1)}} and \textbf{\textit{(F3)}} are satisfied. In addition, if there exists $\tilde{r}>0$ such that
   \begin{align}\label{c1}
    N\|\zeta\|_{\mathbb{W}}+NMT^{1/2}\|u\|_{\mathrm{L}^2(J;\mathbb{U})}+N\|\gamma_{\tilde{r}}\|_{\mathrm{L}^1(J;\mathbb{R}^+)}\le \tilde{r},
   \end{align}
    then for each $u\in\mathrm{L}^{2}(J;\mathbb{U})$, the system \eqref{SEq} has at least one mild solution on $J$.
\end{theorem}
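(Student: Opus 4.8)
The plan is to reuse the fixed-point argument from the proof of Theorem \ref{thm2.9}, replacing the global bound provided by \textbf{\textit{(F2)}} with the local bound \textbf{\textit{(F3)}} and invoking the extra hypothesis \eqref{c1} to close the invariance step. For a fixed $u\in\mathrm{L}^{2}(J;\mathbb{U})$, I would work with the same operator $\mathcal{P}:C(J;\mathbb{W})\to C(J;\mathbb{W})$,
\begin{align*}
(\mathcal{P}w)(t)=\mathscr{G}_{\alpha,\beta}^\nu(t)\zeta+\int_{0}^{t}\mathscr{G}_{\alpha,\beta}^\nu(t-s)\left[\mathrm{B}u(s)+f(s,w(s))\right]\mathrm{d}s,\quad t\in J,
\end{align*}
whose fixed points are exactly the mild solutions of \eqref{SEq}.

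The only genuinely new point is the self-mapping step, and this is precisely where \eqref{c1} enters. I would take the radius $\tilde{r}>0$ furnished by \eqref{c1} and set $Q_{\tilde{r}}=\{w\in C(J;\mathbb{W}):\|w\|_{C(J;\mathbb{W})}\le \tilde{r}\}$. For $w\in Q_{\tilde{r}}$ one has $\|w(s)\|_{\mathbb{W}}\le\tilde{r}$ for all $s\in J$, so \textbf{\textit{(F3)}} gives $\|f(s,w(s))\|_{\mathbb{W}}\le\gamma_{\tilde{r}}(s)$ for a.e.\ $s$. Feeding this bound into the same chain of estimates as in Step (1) of Theorem \ref{thm2.9} yields
\begin{align*}
\|(\mathcal{P}w)(t)\|_{\mathbb{W}}\le N\|\zeta\|_{\mathbb{W}}+NMT^{1/2}\|u\|_{\mathrm{L}^2(J;\mathbb{U})}+N\|\gamma_{\tilde{r}}\|_{\mathrm{L}^1(J;\mathbb{R}^+)},
\end{align*}
whose right-hand side is $\le\tilde{r}$ exactly by \eqref{c1}. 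Hence $\mathcal{P}(Q_{\tilde{r}})\subseteq Q_{\tilde{r}}$. Under \textbf{\textit{(F2)}} this invariance was automatic because the dominating function was fixed in advance, whereas under \textbf{\textit{(F3)}} the dominating function $\gamma_{\tilde{r}}$ depends on the radius, so \eqref{c1} is the substitute that guarantees a compatible radius exists.

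Once the invariant ball $Q_{\tilde{r}}$ is fixed, the continuity of $\mathcal{P}$ and its compactness follow \emph{verbatim} from Steps (2) and (3) of Theorem \ref{thm2.9}. The key observation is that all estimates there are performed over a fixed ball, and on $Q_{\tilde{r}}$ the term $f(s,w(s))$ is uniformly dominated by the single $\mathrm{L}^1$ function $\gamma_{\tilde{r}}$; this domination is all that is required to justify Lebesgue's dominated convergence theorem in the continuity argument and to control the integral terms in the equicontinuity estimate \eqref{e4.8}. The relative compactness of $\mathfrak{A}(t)=\{(\mathcal{P}w)(t):w\in Q_{\tilde{r}}\}$ is obtained as before, comparing $\mathcal{P}$ with its truncation $\mathcal{P}^{\eta}$ via Lemma \ref{lem2.6} and using the compactness of $\mathscr{G}_{\alpha,\beta}^\nu(\eta)$. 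The Arzelà--Ascoli theorem then gives compactness of $\mathcal{P}$ on $Q_{\tilde{r}}$, and the Schauder fixed point theorem delivers a fixed point, i.e.\ a mild solution. I anticipate no real obstacle beyond the bookkeeping in the invariance step: the entire cost of weakening \textbf{\textit{(F2)}} to \textbf{\textit{(F3)}} is absorbed into the quantitative assumption \eqref{c1}.
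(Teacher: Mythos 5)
Your proposal is correct and follows exactly the paper's own argument: use \eqref{c1} together with Step (1) of Theorem \ref{thm2.9} to obtain $\mathcal{P}(Q_{\tilde{r}})\subseteq Q_{\tilde{r}}$, then repeat the remaining steps with $\gamma_{\tilde{r}}$ in place of $\gamma$. Your additional remark that the domination by the single $\mathrm{L}^1$ function $\gamma_{\tilde{r}}$ on the fixed ball is all that the continuity and compactness steps require is a correct and slightly more explicit justification than the paper gives.
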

\begin{proof}
It follows immediately from condition \eqref{c1} and the proof of Step I in Theorem \ref{thm2.9} that the operator $\mathcal{P}$ maps to the set $Q_{\tilde{r}}=\{w\in C(J;\mathbb{W}): \left\|x\right\|_{C(J;\mathbb{W})}\le \tilde{r}\}$ into itself. The remaining of the proof can be completed by proceeding the same steps as in the proof of Theorem \ref{thm2.9}.
\end{proof}
\section{controllability of systems}\label{Linear}\setcounter{equation}{0}
The main objective of this section is to investigate the problem of approximate controllability of linear and semilinear control systems. For the rest of this paper, we assume that $\mathbb{W}$ is a super-reflexive Banach space (see Definition 3, \cite{JRC92}) and $\mathbb{U}$ is a separable Hilbert space (identified with its own dual), unless otherwise specified.
\subsection{Controllability operators} Let us define the following operators:
\begin{equation}\label{Copt}
	\left\{
	\begin{aligned}
		\mathrm{P}_Tu&:=\int_0^T\mathscr{G}_{\alpha,\beta}^\nu(T-s)\mathrm{B}u(t)\mathrm{d}t,\\
		\Upsilon_{0}^{T}&:=\int^{T}_{0}\mathscr{G}_{\alpha,\beta}^\nu(T-t)\mathrm{B}\mathrm{B}^{*}\mathscr{G}_{\alpha,\beta}^\nu(T-t)^{*}\mathrm{d}t=\mathrm{P}_T(\mathrm{P}_T^*),\\
		\mathrm{R}(\lambda,\Upsilon_{0}^{T})&:=(\lambda \mathrm{I}+\Upsilon_{0}^{T}\mathscr{J})^{-1},\ \lambda > 0, 
	\end{aligned}
	\right.
\end{equation}
where  $\mathrm{B}^{*}$ and $\mathscr{G}_{\alpha,\beta}^\nu(T-t)^*$ denote the adjoint operators of $\mathrm{B}$ and $\mathscr{G}_{\alpha,\beta}^\nu(T-t),$ respectively. The map $\mathscr{J}:\mathbb{W}  \rightarrow 2^{\mathbb{W}^*}$  denotes the \emph{duality mapping,} which is defined by $$\mathscr{J}[w]=\{w^*\in \mathbb{W}^*:\langle w^*, w\rangle=\|w\|_{\mathbb{W}}^{2}=\|w^*\|_{\mathbb{W}^*}^{2}\}, \ \text{ for all } \ w\in \mathbb{W}.$$ 

Now we highlight some key discussions and results related with these operators.
\begin{rem}\label{r1}
\begin{itemize}
		\item[(i)] It is evident that the operator $\Upsilon_{0}^{T}:\mathbb{W}^*\to\mathbb{W}$ is nonnegative and symmetric and the operator $\mathrm{R}(\lambda,\Upsilon_{0}^{T})$ is nonlinear. However, if $\mathbb{W}$ is a Hilbert space identified by its own dual, then the duality mapping $\mathscr{J}$ simplifies to the identity operator $\mathrm{I}$. In this case, the resolvent operator $\mathrm{R}(\lambda,\Upsilon_{0}^{T}):=(\lambda \mathrm{I}+\Upsilon_{0}^{T})^{-1},\ \lambda > 0$ is linear.  
		\item [(ii)] As $\mathbb{W}$ is a super-reflexive Banach space, then $\mathbb{W}$ can be renormed in such a way that $\mathbb{W}$ becomes uniformly smooth (cf. \cite[Proposition 5.2]{DGZ93}). Thus, the dual space $\mathbb{W}^*$ is uniformly convex by Remark \ref{r2} (see Appendix \ref{ap}). Therefore, by \cite[Proposition 32.22]{ZE90}, the mapping $\mathscr{J}$ is uniformly continuous on every bounded subsets of $\mathbb{W}$.
\item [(iii)] Since $\mathbb{W}$ is a super-reflexive Banach space, then $\mathbb{W}$ is reflexive. Then $\mathbb{W}$ can be renormed such that $\mathbb{W}$ and $\mathbb{W}^*$ become strictly convex (cf. \cite{IJM1967}). As a consequence, the mapping $\mathscr{J}$ becomes single-valued. In addition, the mapping $\mathcal{J}$  satisfies the demicontinuity property also (see \cite{Sp1978}), that is,  $$w_n\to w\ \text{ in }\ \mathbb{W}\implies \mathscr{J}[w_n] \xrightharpoonup{w} \mathscr{J}[w]\ \text{ in } \ \mathbb{W}^*\ \text{ as }\ n\to\infty.$$
	\end{itemize}
\end{rem}
If the mapping $\mathscr{J}$ is single-valued, we say that $\mathscr{J}$ is normalized duality mapping.

The following lemma is useful. 
\begin{lem}\label{lem2.16} Let $\mathbb{U}$ be a separable Hilbert space. For $\lambda>0$, the map $(\lambda\mathrm{I}+\Upsilon_{0}^{T}\mathscr{J})$ is invertible. In addition, for any given $y\in\mathbb{W}$ the following estimate
\begin{align}\label{ee1}
\left\|\lambda\mathrm{R}(\lambda,\Upsilon_{0}^{T})y]\right\|_{\mathbb{W}}\leq\left\|y\right\|_{\mathbb{W}},
	\end{align}
    holds.
\end{lem}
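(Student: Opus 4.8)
The plan is to establish invertibility by reducing the equation $\lambda z+\Upsilon_{0}^{T}\mathscr{J}[z]=y$ to a monotone operator equation posed on $\mathbb{W}^{*}$, and then to obtain the norm estimate by pairing the defining relation with a suitable functional. Throughout I would work in the renormed space furnished by Remark \ref{r1}, so that $\mathbb{W}$ and $\mathbb{W}^{*}$ are reflexive, smooth and strictly convex; in particular the normalized duality mapping $\mathscr{J}\colon\mathbb{W}\to\mathbb{W}^{*}$ is single-valued, bijective, and satisfies $\mathscr{J}^{-1}=\mathscr{J}_{*}$, where $\mathscr{J}_{*}\colon\mathbb{W}^{*}\to\mathbb{W}$ denotes the duality mapping of $\mathbb{W}^{*}$ (using $\mathbb{W}^{**}=\mathbb{W}$).

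First I would address invertibility. Fix $y\in\mathbb{W}$ and put $v=\mathscr{J}[z]$, so that $z=\mathscr{J}_{*}[v]$; the equation $\lambda z+\Upsilon_{0}^{T}\mathscr{J}[z]=y$ becomes $F(v):=\lambda\mathscr{J}_{*}[v]+\Upsilon_{0}^{T}v=y$, where $F\colon\mathbb{W}^{*}\to\mathbb{W}=(\mathbb{W}^{*})^{*}$. I would then verify that $F$ satisfies the hypotheses of the Minty--Browder theorem (see \cite{ZE90}) on the reflexive space $\mathbb{W}^{*}$: the map $\mathscr{J}_{*}$ is monotone and $\Upsilon_{0}^{T}$ is linear and nonnegative (Remark \ref{r1}(i)), so their sum is monotone; $F$ is demicontinuous, since $\mathscr{J}_{*}$ is continuous on bounded sets and $\Upsilon_{0}^{T}\in\mathcal{L}(\mathbb{W}^{*};\mathbb{W})$; and $F$ is coercive, because $\langle v,F(v)\rangle=\lambda\|v\|_{\mathbb{W}^{*}}^{2}+\langle v,\Upsilon_{0}^{T}v\rangle\ge\lambda\|v\|_{\mathbb{W}^{*}}^{2}$, whence $\langle v,F(v)\rangle/\|v\|_{\mathbb{W}^{*}}\to\infty$ as $\|v\|_{\mathbb{W}^{*}}\to\infty$. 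The Minty--Browder theorem then gives surjectivity of $F$, while the strict monotonicity of $\mathscr{J}_{*}$ (strict convexity of $\mathbb{W}^{*}$) gives injectivity. Consequently $F$ is a bijection, and undoing the substitution via $z=\mathscr{J}_{*}[v]$ shows that $\lambda\mathrm{I}+\Upsilon_{0}^{T}\mathscr{J}$ is invertible, so that $\mathrm{R}(\lambda,\Upsilon_{0}^{T})$ is well defined.

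For the estimate, set $z=\mathrm{R}(\lambda,\Upsilon_{0}^{T})y$, so that $\lambda z+\Upsilon_{0}^{T}\mathscr{J}[z]=y$. Pairing this identity with $\mathscr{J}[z]\in\mathbb{W}^{*}$ and using $\langle\mathscr{J}[z],z\rangle=\|z\|_{\mathbb{W}}^{2}$, the nonnegativity $\langle\mathscr{J}[z],\Upsilon_{0}^{T}\mathscr{J}[z]\rangle\ge0$, and $\|\mathscr{J}[z]\|_{\mathbb{W}^{*}}=\|z\|_{\mathbb{W}}$, I obtain
\begin{align*}
\lambda\|z\|_{\mathbb{W}}^{2}\le\langle\mathscr{J}[z],\lambda z+\Upsilon_{0}^{T}\mathscr{J}[z]\rangle=\langle\mathscr{J}[z],y\rangle\le\|z\|_{\mathbb{W}}\,\|y\|_{\mathbb{W}}.
\end{align*}
Dividing by $\|z\|_{\mathbb{W}}$ when $z\neq0$ (the case $z=0$ being trivial) yields $\lambda\|z\|_{\mathbb{W}}\le\|y\|_{\mathbb{W}}$, which, since $\lambda\mathrm{R}(\lambda,\Upsilon_{0}^{T})y=\lambda z$, is exactly \eqref{ee1}.

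The main obstacle is the surjectivity step. One must carefully justify that the substitution $v=\mathscr{J}[z]$ is legitimate, i.e. that the renorming of Remark \ref{r1} makes $\mathscr{J}$ bijective with $\mathscr{J}^{-1}=\mathscr{J}_{*}$, and that $F$ genuinely meets the demicontinuity and coercivity hypotheses of the Minty--Browder theorem; the monotonicity and the final norm bound are then routine consequences of the nonnegativity and symmetry of $\Upsilon_{0}^{T}$ together with the defining identities of the duality mappings.
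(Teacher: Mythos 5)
Your proof is correct, and it is essentially the argument the paper relies on: the paper does not prove Lemma \ref{lem2.16} itself but defers to \cite[Lemma 4.1]{JDE2020}, whose proof is exactly this reduction of $\lambda z+\Upsilon_{0}^{T}\mathscr{J}[z]=y$ to the monotone, demicontinuous, coercive operator $\lambda\mathscr{J}_{*}+\Upsilon_{0}^{T}$ on the reflexive space $\mathbb{W}^{*}$ followed by a Minty--Browder surjectivity argument, with the estimate \eqref{ee1} obtained by pairing the defining identity with $\mathscr{J}[z]$ and using the nonnegativity of $\Upsilon_{0}^{T}$. No gaps; the only point worth recording explicitly is that the renorming must simultaneously make $\mathbb{W}$ and $\mathbb{W}^{*}$ strictly convex so that both $\mathscr{J}$ and $\mathscr{J}_{*}$ are single-valued with $\mathscr{J}^{-1}=\mathscr{J}_{*}$, which Remark \ref{r1} provides.
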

A proof of the above lemma can be carried out in a manner similar to the proof of \cite[Lemma 4.1]{JDE2020}.

The similar results mentioned in the above lemma can also be true if the state space $\mathbb{W}$ is a separable reflexive Banach space (see Lemma 2.2, \cite{SIAM2003}).

The following lemma implies that the operator $\mathrm{R}(\lambda,\Upsilon_{0}^{T})$ is continuous in every bounded subset of a super-reflexive Banach space $\mathbb{W}$.
\begin{lem}\label{lem2.17}
	The operator $\mathrm{R}(\lambda,\Upsilon_{0}^{T}):\mathbb{W}\to\mathbb{W},\ \lambda > 0$  is uniformly continuous in every bounded subset of $\mathbb{W}$.
\end{lem}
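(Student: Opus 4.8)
The plan is to characterize uniform continuity on a bounded set by a sequential contradiction argument and then reduce everything to the defining resolvent identity, using only the monotonicity of the duality map $\mathscr{J}$ together with the nonnegativity of $\Upsilon_{0}^{T}$ and its factorization $\Upsilon_{0}^{T}=\mathrm{P}_T\mathrm{P}_T^{*}$. Fix $\lambda>0$ and a bounded set $\{y\in\mathbb{W}:\|y\|_{\mathbb{W}}\le\rho\}$. For $y\in\mathbb{W}$ write $w=\mathrm{R}(\lambda,\Upsilon_{0}^{T})y$, which by Lemma \ref{lem2.16} is the unique solution of $\lambda w+\Upsilon_{0}^{T}\mathscr{J}[w]=y$; here $\mathscr{J}$ is single-valued by Remark \ref{r1}(iii). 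The estimate \eqref{ee1} gives $\|w\|_{\mathbb{W}}\le\lambda^{-1}\|y\|_{\mathbb{W}}$, so $\mathrm{R}(\lambda,\Upsilon_{0}^{T})$ maps the ball of radius $\rho$ into the ball of radius $\rho/\lambda$.

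Suppose, to the contrary, that $\mathrm{R}(\lambda,\Upsilon_{0}^{T})$ is not uniformly continuous on this ball. Then there exist $\varepsilon_0>0$ and sequences $\{y_1^n\},\{y_2^n\}$ with $\|y_i^n\|_{\mathbb{W}}\le\rho$, $\|y_1^n-y_2^n\|_{\mathbb{W}}\to 0$, yet $\|w_1^n-w_2^n\|_{\mathbb{W}}\ge\varepsilon_0$, where $w_i^n=\mathrm{R}(\lambda,\Upsilon_{0}^{T})y_i^n$. Subtracting the two resolvent equations yields
\[
\lambda(w_1^n-w_2^n)+\Upsilon_{0}^{T}\bigl(\mathscr{J}[w_1^n]-\mathscr{J}[w_2^n]\bigr)=y_1^n-y_2^n .
\]
Now pair this $\mathbb{W}$-valued identity with $\mathscr{J}[w_1^n]-\mathscr{J}[w_2^n]\in\mathbb{W}^{*}$. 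The first term on the left equals $\lambda\langle\mathscr{J}[w_1^n]-\mathscr{J}[w_2^n],w_1^n-w_2^n\rangle\ge 0$ by monotonicity of the duality map, and the second equals $\|\mathrm{P}_T^{*}(\mathscr{J}[w_1^n]-\mathscr{J}[w_2^n])\|_{\mathrm{L}^2}^{2}\ge 0$ by the factorization and nonnegativity of $\Upsilon_{0}^{T}$. The right-hand side is bounded by $\|\mathscr{J}[w_1^n]-\mathscr{J}[w_2^n]\|_{\mathbb{W}^{*}}\,\|y_1^n-y_2^n\|_{\mathbb{W}}\le 2\lambda^{-1}\rho\,\|y_1^n-y_2^n\|_{\mathbb{W}}\to 0$, where I use $\|\mathscr{J}[w]\|_{\mathbb{W}^{*}}=\|w\|_{\mathbb{W}}$ and the boundedness of the $w_i^n$.

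Since both left-hand terms are nonnegative and their sum tends to $0$, each tends to $0$; in particular $\|\mathrm{P}_T^{*}(\mathscr{J}[w_1^n]-\mathscr{J}[w_2^n])\|_{\mathrm{L}^2}\to 0$, whence $\|\Upsilon_{0}^{T}(\mathscr{J}[w_1^n]-\mathscr{J}[w_2^n])\|_{\mathbb{W}}\le\|\mathrm{P}_T\|\,\|\mathrm{P}_T^{*}(\mathscr{J}[w_1^n]-\mathscr{J}[w_2^n])\|_{\mathrm{L}^2}\to 0$. Returning to the subtracted identity, $\lambda(w_1^n-w_2^n)=(y_1^n-y_2^n)-\Upsilon_{0}^{T}(\mathscr{J}[w_1^n]-\mathscr{J}[w_2^n])\to 0$, so $w_1^n-w_2^n\to 0$, contradicting $\|w_1^n-w_2^n\|_{\mathbb{W}}\ge\varepsilon_0$. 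This establishes uniform continuity on every bounded subset.

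The step I expect to require the most care is recognizing that one should \emph{not} try to deduce the conclusion directly from $\langle\mathscr{J}[w_1^n]-\mathscr{J}[w_2^n],w_1^n-w_2^n\rangle\to 0$, which is the usual hard step and would force an appeal to the uniform convexity of $\mathbb{W}$ and the modulus of convexity. Instead, the nonnegativity term is used only to control $\Upsilon_{0}^{T}(\mathscr{J}[w_1^n]-\mathscr{J}[w_2^n])$, and the convergence $w_1^n-w_2^n\to 0$ is then read off from the resolvent equation itself. The remaining subtleties, namely single-valuedness and solvability of the resolvent equation, are already supplied by Remark \ref{r1}(iii) and Lemma \ref{lem2.16}.
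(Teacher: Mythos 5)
Your proof is correct, but it takes a genuinely different route from the paper. The paper does not argue directly: it observes that $\mathbb{W}^*$ is uniformly convex (via Remarks \ref{r1} and \ref{r2}, i.e.\ super-reflexivity plus renorming) and then defers to the proof of Lemma 2.24 in \cite{SAAK2024}, which rests on the uniform continuity of the duality map $\mathscr{J}$ on bounded sets — a property that genuinely needs the uniform smoothness/convexity machinery. Your argument bypasses all of that: after subtracting the two resolvent identities and pairing with $\mathscr{J}[w_1^n]-\mathscr{J}[w_2^n]$, you only invoke (i) single-valuedness of $\mathscr{J}$ (Remark \ref{r1}(iii)), (ii) the elementary monotonicity estimate $\langle\mathscr{J}[x]-\mathscr{J}[y],x-y\rangle\ge(\|x\|-\|y\|)^2\ge 0$, and (iii) the factorization $\Upsilon_0^T=\mathrm{P}_T\mathrm{P}_T^*$ to convert the quadratic term into $\|\mathrm{P}_T^*(\mathscr{J}[w_1^n]-\mathscr{J}[w_2^n])\|_{\mathrm{L}^2}^2$; the conclusion $w_1^n-w_2^n\to 0$ is then read off from the resolvent equation rather than from a modulus-of-convexity inequality. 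What your approach buys is self-containedness and weaker hypotheses (strict convexity of the renormed $\mathbb{W}$ and $\mathbb{W}^*$ suffices, rather than uniform convexity of $\mathbb{W}^*$); what the paper's approach buys is brevity and consistency with the cited literature, where the same lemma is reused. The only points to make explicit if you write this up are that boundedness of the set is what controls $\|\mathscr{J}[w_1^n]-\mathscr{J}[w_2^n]\|_{\mathbb{W}^*}\le 2\rho/\lambda$ on the right-hand side, and that $\mathrm{P}_T^*$ maps $\mathbb{W}^*$ into $\mathrm{L}^2(J;\mathbb{U})$ because $\mathbb{U}$ is a Hilbert space identified with its dual; both are already implicit in your write-up.
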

In view of Remark \ref{r1} and Remark \ref{r2}, it is clear that the dual space $\mathbb{W}^*$ is uniformly convex. Hence, a proof is proceeding to a similar manner of the proof of \cite[Lemma 2.24]{SAAK2024}.
\subsection{Optimal control problem and approximate controllability}\label{sublinear}
This subsection first concentrates on an optimal control problem associated with the linear control system corresponding to \eqref{SEq}. Then we discuss some important results related to the approximate controllability of the linear control problem. To do this, we consider a linear regulator problem, and the objective is to minimize a cost functional defined as follows:
\begin{equation}\label{CF}
	\mathscr{H}(w,u)=\left\|w(T)-\zeta_1\right\|^{2}_{\mathbb{W}}+\lambda\int^{T}_{0}\left\|u(t)\right\|^{2}_{\mathbb{U}}\mathrm{d}t,
\end{equation}
where $\lambda >0, \zeta_1\in\mathbb{W}$ and $w(\cdot)$ is a mild solution of the system
\begin{equation}\label{LEq2}
	\left\{
	\begin{aligned}
		w'(t)&=\mathrm{A}w(t)+\int_{0}^{t}\kappa(t-s)\mathrm{A}w(s)\mathrm{d}s+\mathrm{B}u(t),\ t\in(0,T],\\
		w(0)&=\zeta,
	\end{aligned}
	\right.
\end{equation}
with control $u\in\mathcal{U}_{\mathrm{ad}}=\mathrm{L}^2(J;\mathbb{U})$ (class of \emph{admissible controls}). It is easy to infer that $\mathrm{B}u\in\mathrm{L}^1(J;\mathbb{W})$. Then, by Proposition \ref{pre2.7}, the system \eqref{LEq2} has a unique mild solution $w\in \mathrm{C}(J;\mathbb{W}) $ given by
\begin{align*}
	w(t)= \mathscr{G}_{\alpha,\beta}^\nu(t)\zeta+\int^{t}_{0}\mathscr{G}_{\alpha,\beta}^\nu(t-s)\mathrm{B}u(s)\mathrm{d}s,\ t\in J.
\end{align*}
The \emph{admissible class} of the pair is defined as $$\mathcal{A}_{\mathrm{ad}}:=\big\{(w,u) :w\ \text{is a unique mild solution of}\ \eqref{LEq2} \ \text{associated with the control}\ u\in\mathcal{U}_{\mathrm{ad}}\big\}.$$ For any $u \in \mathcal{U}_{\mathrm{ad}}$, there exists a unique mild solution of the system \eqref{LEq2}. Consequently, the set $\mathcal{A}_{\mathrm{ad}}$ is nonempty.

In the next theorem we give the existence of an optimal pair for the minimization problem associated with the cost functional defined in \eqref{CF}. The proof of the theorem employs a technique similar to that presented in \cite{MTM-20}.
\begin{theorem}\label{exi}
	For any $\zeta\in\mathbb{W}$, the problem 
    \begin{align}\label{opt}
	\min_{ (w,u) \in \mathcal{A}_{\mathrm{ad}}}  \mathscr{H}(w,u),
\end{align}
possesses a unique optimal pair $(w^0,u^0)\in\mathcal{A}_{\mathrm{ad}}$.
\end{theorem}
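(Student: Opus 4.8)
The plan is to use the direct method of the calculus of variations, exploiting that $\mathscr{H}$ is coercive and strictly convex in the control variable. First I would observe that since $\mathscr{H}(w,u)\ge 0$ for every admissible pair and $\mathcal{A}_{\mathrm{ad}}\neq\emptyset$ (the control $u\equiv 0$ is admissible), the infimum $m:=\inf_{(w,u)\in\mathcal{A}_{\mathrm{ad}}}\mathscr{H}(w,u)$ is a finite nonnegative number. Choosing a minimizing sequence $\{(w^n,u^n)\}\subset\mathcal{A}_{\mathrm{ad}}$ with $\mathscr{H}(w^n,u^n)\to m$, the elementary bound $\lambda\|u^n\|^2_{\mathrm{L}^2(J;\mathbb{U})}\le\mathscr{H}(w^n,u^n)$ shows that $\{u^n\}$ is bounded in $\mathrm{L}^2(J;\mathbb{U})$. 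Since $\mathrm{L}^2(J;\mathbb{U})$ is a Hilbert space, after passing to a subsequence (not relabelled) there exists $u^0\in\mathrm{L}^2(J;\mathbb{U})$ with $u^n\rightharpoonup u^0$ weakly.

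Next I would pass to the limit in the state equation. Each $w^n$ is the mild solution associated with $u^n$, so $w^n(T)=\mathscr{G}_{\alpha,\beta}^\nu(T)\zeta+\mathrm{P}_T u^n$ with $\mathrm{P}_T$ the bounded linear operator from \eqref{Copt}. A bounded linear operator is continuous from the weak topology of $\mathrm{L}^2(J;\mathbb{U})$ to the weak topology of $\mathbb{W}$, so $\mathrm{P}_Tu^n\rightharpoonup\mathrm{P}_Tu^0$ and hence $w^n(T)\rightharpoonup w^0(T)$ in $\mathbb{W}$, where $w^0$ denotes the unique (by Proposition \ref{pre2.7}) mild solution driven by $u^0$. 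In particular $(w^0,u^0)\in\mathcal{A}_{\mathrm{ad}}$. Compactness of $\mathscr{G}_{\alpha,\beta}^\nu(T)$ in fact upgrades this to strong convergence of $w^n(T)$, but weak convergence already suffices for the argument.

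With convergence in hand, lower semicontinuity finishes existence. The map $v\mapsto\|v-\zeta_1\|^2_{\mathbb{W}}$ is convex and norm-continuous, hence weakly lower semicontinuous on $\mathbb{W}$, so $\|w^0(T)-\zeta_1\|^2_{\mathbb{W}}\le\liminf_n\|w^n(T)-\zeta_1\|^2_{\mathbb{W}}$; likewise the squared Hilbert norm is weakly lower semicontinuous, giving $\|u^0\|^2_{\mathrm{L}^2(J;\mathbb{U})}\le\liminf_n\|u^n\|^2_{\mathrm{L}^2(J;\mathbb{U})}$. Adding these yields $\mathscr{H}(w^0,u^0)\le\liminf_n\mathscr{H}(w^n,u^n)=m$, and since $(w^0,u^0)$ is admissible the reverse inequality $\mathscr{H}(w^0,u^0)\ge m$ holds trivially; thus $(w^0,u^0)$ is an optimal pair.

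Uniqueness follows from strict convexity. Because $u\mapsto w(T)$ is affine, $u\mapsto\|w(T)-\zeta_1\|^2_{\mathbb{W}}$ is convex, while $u\mapsto\lambda\|u\|^2_{\mathrm{L}^2(J;\mathbb{U})}$ is strictly convex (the squared norm of a Hilbert space is strictly convex, by the parallelogram law). Their sum $u\mapsto\mathscr{H}(w,u)$ is therefore strictly convex, so it admits at most one minimizer $u^0$; as the state is uniquely determined by the control through Proposition \ref{pre2.7}, the optimal pair $(w^0,u^0)$ is unique. I expect the only genuinely delicate point to be the passage to the limit in the state equation, specifically justifying weak continuity of $u\mapsto w(T)$ so that $(w^0,u^0)$ remains admissible, whereas coercivity, lower semicontinuity, and strict convexity are routine.
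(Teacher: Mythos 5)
Your argument is correct and follows the same overall strategy as the paper (the direct method applied to a minimizing sequence), but it handles the passage to the limit differently and, in one respect, more economically. The paper extracts weak limits of both $\{u^n\}$ and $\{w^n\}$ and then invokes the compactness of the convolution operator $(Qg)(\cdot)=\int_0^{\cdot}\mathscr{G}_{\alpha,\beta}^{\nu}(\cdot-s)g(s)\,\mathrm{d}s$ (Lemma \ref{lem2.12}) to upgrade $\mathrm{B}u^{n_j}\rightharpoonup \mathrm{B}u^0$ to strong convergence of the trajectories in $C(J;\mathbb{W})$, and then appeals to sequential weak lower semicontinuity of $\mathscr{H}$ on the product space. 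You instead observe that the cost depends on the state only through $w(T)=\mathscr{G}_{\alpha,\beta}^{\nu}(T)\zeta+\mathrm{P}_Tu$, that the bounded linear operator $\mathrm{P}_T$ from \eqref{Copt} is weak-to-weak continuous, and that $v\mapsto\|v-\zeta_1\|_{\mathbb{W}}^2$ is convex and norm-continuous, hence weakly lower semicontinuous; this removes any reliance on compactness of the resolvent family and on the cited semicontinuity propositions, at the price of identifying $w^0$ only through Proposition \ref{pre2.7} rather than as a uniform limit of the $w^n$. Your uniqueness argument is also sharper than the paper's: the paper asserts uniqueness from convexity of $\mathscr{H}$, linearity of the constraint, and convexity of $\mathcal{U}_{\mathrm{ad}}$, which as stated only yields convexity of the set of minimizers; your observation that $u\mapsto\lambda\|u\|^2_{\mathrm{L}^2(J;\mathbb{U})}$ is strictly convex on the Hilbert space $\mathrm{L}^2(J;\mathbb{U})$ while $u\mapsto\|w(T)-\zeta_1\|^2_{\mathbb{W}}$ is convex (the control-to-terminal-state map being affine) is exactly the missing ingredient that forces the minimizing control, and hence the pair, to be unique.
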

\begin{proof}
     Let us define 
    $$R := \inf \limits _{u \in \mathcal{U}_{\mathrm{ad}}} \mathscr{H}(w, u).$$ 
    Since \(0 \leq R < +\infty\), there exists a minimizing sequence \(\{u^n\}_{n=1}^{\infty} \subset \mathcal{U}_{\mathrm{ad}}\) such that 
    $$\lim_{n \to \infty} \mathscr{H}(w^n, u^n) = R,$$ 
    where \((w^n, u^n) \in \mathcal{A}_{\mathrm{ad}}\) for each \(n \in \mathbb{N}\). For each \(n\), the function \(w^n(\cdot)\) is given by 
    \begin{align}\label{eq3.6}
        w^n(t) = \mathscr{G}_{\alpha, \beta}^\nu(t)\zeta + \int_0^t \mathscr{G}_{\alpha, \beta}^\nu(t-s) \mathrm{B}u^n(s) \, \mathrm{d}s, \quad t \in J.
    \end{align}
    Since \(0 \in \mathcal{U}_{\mathrm{ad}}\), we may assume without loss of generality that \(\mathscr{H}(w^n, u^n) \leq \mathscr{H}(w, 0)\), where \((w, 0) \in \mathcal{A}_{\mathrm{ad}}\). This implies
    \begin{align*}
        \left\|w^n(T) - \zeta_1\right\|_{\mathbb{W}}^2 + \lambda \int_0^T \|u^n(t)\|_{\mathbb{U}}^2 \, \mathrm{d}t 
        \leq \left\|w(T) - \zeta_1\right\|_{\mathbb{W}}^2 
        \leq 2\left(\|w(T)\|_{\mathbb{W}}^2 + \|\zeta_1\|_{\mathbb{W}}^2\right) < +\infty.
    \end{align*}
	This estimate confirms the existence of a constant $L > 0$, large enough, such that
	\begin{align}\label{e3.7}\int_0^T \|u^n(t)\|^2_{\mathbb{U}} \mathrm{d} t \leq  L< +\infty 
		.\end{align} 
        Using \eqref{eq3.6}, we compute
	\begin{align*}
\|w^n(t)\|_{\mathbb{W}}&\leq\|\mathscr{G}_{\alpha,\beta}^\nu(t)\zeta\|_{\mathbb{W}}+\int_0^t\|\mathscr{G}_{\alpha,\beta}^\nu(t-s)\mathrm{B}u^n(s)\|_{\mathbb{W}}\mathrm{d}s \nonumber\\&\leq\|\mathscr{G}_{\alpha,\beta}^\nu(t)\|_{\mathcal{L}(\mathbb{W})}\|\zeta\|_{\mathbb{W}}+\int_0^t\|\mathscr{G}_{\alpha,\beta}^\nu(t-s)\|_{\mathcal{L}(\mathbb{W})}\|\mathrm{B}\|_{\mathcal{L}(\mathbb{U};\mathbb{W})}\|u^n(s)\|_{\mathbb{U}}\mathrm{d}s\nonumber\\&\leq N\|\zeta\|_{\mathbb{W}}+NMt^{1/2}\left(\int_0^t\|u^n(s)\|_{\mathbb{U}}^2\mathrm{d}s\right)^{1/2}\nonumber\\&\leq N\|\zeta\|_{\mathbb{W}}+KMt^{1/2}L^{1/2}<+\infty, \ \mbox{for all}\ t\in J.
	\end{align*}
	 The above relation implies that the sequence $\{w^n\}_{n=1}^{\infty}$ is bounded in the space $\mathrm{L}^{2}(J;\mathbb{W})$. Then by the Banach-Alaoglu theorem, we can extract a subsequence $\{w^{n_j}\}_{j=1}^{\infty}$ of $\{w^n\}_{n=1}^{\infty}$ such that 
	\begin{align*}
		w^{n_j}\xrightharpoonup{w}w^0\ \text{ in }\mathrm{L}^{2}(J;\mathbb{W})\ \text{ as }\ j\to\infty. 
	\end{align*}
	Similarly, from the estimate \eqref{e3.7}, the sequence $\{u^n\}_{n=1}^{\infty}\subset\mathrm{L}^2(J;\mathbb{U})$ is uniformly bounded. Once again by the Banach-Alaoglu theorem, there exists a subsequence  $\{u^{n_j}\}_{j=1}^{\infty}$ of $\{u^n\}_{n=1}^{\infty}$ such that 
	\begin{align*}
		u^{n_j}\xrightharpoonup{w}u^0\ \text{ in }\mathrm{L}^2(J;\mathbb{U}) \ \text{ as } \ j\to\infty. 
	\end{align*}
    Also we have
	\begin{align}\label{e3.8}
		\mathrm{B}	u^{n_j}\xrightharpoonup{w}\mathrm{B}u^0\ \text{ in }\mathrm{L}^2(J;\mathbb{W})\ \text{ as } \ j\to\infty. 
	\end{align}
	From the weak convergence \eqref{e3.8} and the compactness of the operator $(Qg)(\cdot)= \int_{0}^{\cdot}\mathscr{G}_{\alpha,\beta}^\nu(\cdot-s)g(s)\mathrm{d}s : \mathrm{L}^{2}(J;\mathbb{W}) \rightarrow \mathrm{C}(J;\mathbb{W})$ (Lemma \ref{lem2.12}), we have that
	\begin{align*}
		&	\int_0^t\mathscr{G}_{\alpha,\beta}^\nu(t-s)\mathrm{B}u^{n_j}(s)\mathrm{d}s\to\int_0^t\mathscr{G}_{\alpha,\beta}^\nu(t-s)\mathrm{B}u^0(s)\mathrm{d}s\ \mbox{in}\ C(J;\mathbb{W}) \ \text{ as } \ j\to\infty. 
	\end{align*}
	 Moreover, we obtain
	\begin{align*}
		\sup_{t\in J}\|w^{n_j}(t)-w^0(t)\|_{\mathbb{W}}&=\sup_{t\in J}\left\|\int_0^t\mathscr{G}_{\alpha,\beta}^\nu(t-s)\mathrm{B}u^{n_j}(s)\mathrm{d}s-\int_0^t\mathscr{G}_{\alpha,\beta}^\nu(t-s)\mathrm{B}u^0(s)\mathrm{d}s\right\|_{\mathbb{W}}\nonumber\\&\to 0\ \text{ as }\ j\to\infty, 
	\end{align*}
	where 
	\begin{align*}
		w^{0}(t)=\mathscr{G}_{\alpha,\beta}^\nu(t)\zeta+\int^{t}_{0}\mathscr{G}_{\alpha,\beta}^\nu(t-s)\mathrm{B}u^0(s)\mathrm{d}s, \ t\in J.
	\end{align*}
    Thus, $w^{0}(\cdot)$ is the unique mild solution of the system \eqref{LEq2} corresponding with the control $u^0$. As $w^0(\cdot)$ is a unique mild solution of \eqref{LEq2}, therefore the whole sequence $\{w^n\}_{n=1}^{\infty}$ converges to  $w^0$. The fact that  $u^0\in\mathcal{U}_{\mathrm{ad}}$ immediately implies that $(w^0,u^0)\in\mathcal{A}_{\mathrm{ad}}$.

    Finally, we claim that  $(w^0,u^0)$ is a minimizer, that is, \emph{$R=\mathscr{H}(w^0,u^0)$}. Observe that the cost functional $\mathscr{H}(\cdot,\cdot)$, as defined in \eqref{CF}, is both convex and continuous on $\mathrm{L}^2(J;\mathbb{W}) \times \mathrm{L}^2(J;\mathbb{U})$ (cf. Proposition III.1.6 and III.1.10,  \cite{CP1983}). These properties ensure that  $\mathscr{H}(\cdot,\cdot)$ is sequentially weakly lower semi-continuous (cf. Proposition II.4.5, \cite{CP1983}). Specifically, for a sequence
	$$(w^n,u^n)\xrightharpoonup{w}(w^0,u^0)\ \text{ in }\ \mathrm{L}^2(J;\mathbb{W}) \times  \mathrm{L}^2(J;\mathbb{U}) \ \mbox{as}\ n\to\infty,$$
	we have 
	\begin{align*}
		\mathscr{H}(w^0,u^0) \leq  \liminf \limits _{n\rightarrow \infty} \mathscr{H}(w^n,u^n).
	\end{align*}
	Consequently, we deduce 
	\begin{align*}R \leq \mathscr{H}(w^0,u^0) \leq  \liminf \limits _{n\rightarrow \infty} \mathscr{H}(w^n,u^n)=  \lim \limits _{n\rightarrow \infty} \mathscr{H}(w^n,u^n) = R.\end{align*}
	Hence, $(w^0,u^0)$ is a minimizer of the problem \eqref{opt}. The uniqueness arguments readily follows from the facts the cost functional defined in \eqref{CF} is convex, the constraint \eqref{LEq2} is linear and $\mathcal{U}_{\mathrm{ad}}=\mathrm{L}^2(J;\mathbb{U})$ is convex space. Hence proof is complete.
\end{proof}
Before presenting the expression of the optimal control ${u}$, we first note that the differentiability of the mapping $p(w)=\frac{1}{2}\|w\|_{\mathbb{W}}^2$, is explicitly given by 
$$\langle\partial_w p(w),z\rangle=\frac{1}{2}\frac{\mathrm{d}}{\mathrm{d}\varepsilon}\|w+\varepsilon z\|_{\mathbb{W}}^2\Big|_{\varepsilon=0}=\langle\mathscr{J}[w],z\rangle,$$ for $z\in\mathbb{W}$, where $\partial_w p(w)$ denotes the G\^ateaux derivative of $p$ at $w\in\mathbb{W}$, (cf.  \cite{SPV2001}).

The explicit expression of the optimal control $u$ is provided in the following lemma.
\begin{lem}\label{lem3.2}
	The optimal control ${u}$ that minimize the cost functional \eqref{CF} is given by
	\begin{align*}
		{u}(t)=\mathrm{B}^{*}\mathscr{G}_{\alpha,\beta}^\nu(T-t)^*\mathscr{J}\left[\mathrm{R}(\lambda,\Upsilon_0^{T})l(w(\cdot))\right],\ t\in J,
	\end{align*}
	where
	\begin{align*}
		l(w(\cdot))=\zeta_1-\mathscr{G}_{\alpha,\beta}^\nu(T)\zeta.
	\end{align*}
\end{lem}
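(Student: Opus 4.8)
The plan is to exploit that the admissible class $\mathcal{U}_{\mathrm{ad}}=\mathrm{L}^2(J;\mathbb{U})$ is the entire space and that $\mathscr{H}$ is convex, so the unique optimal pair supplied by Theorem \ref{exi} is completely characterized by the vanishing of the first variation of $\mathscr{H}$ in every direction. First I would record that the end-state map $u\mapsto w(T)$ is affine: with $\mathrm{P}_Tu=\int_0^T\mathscr{G}_{\alpha,\beta}^\nu(T-s)\mathrm{B}u(s)\mathrm{d}s$, the mild solution formula gives $w(T)=\mathscr{G}_{\alpha,\beta}^\nu(T)\zeta+\mathrm{P}_Tu$, whose linear part $\mathrm{P}_T$ is independent of $u$. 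Hence for any direction $v\in\mathrm{L}^2(J;\mathbb{U})$ the perturbed end state is $w_\varepsilon(T)=w(T)+\varepsilon\,\mathrm{P}_Tv$. Differentiating $\mathscr{H}(w_\varepsilon,u+\varepsilon v)$ at $\varepsilon=0$, using the stated G\^ateaux derivative $\langle\partial_w p(w),z\rangle=\langle\mathscr{J}[w],z\rangle$ of $p(w)=\tfrac12\|w\|_{\mathbb{W}}^2$ for the terminal term and the Hilbert structure of $\mathbb{U}$ for the control term, and dividing by the common factor $2$, I obtain the optimality condition $\langle\mathscr{J}[w(T)-\zeta_1],\mathrm{P}_Tv\rangle+\lambda\,\langle u,v\rangle_{\mathrm{L}^2(J;\mathbb{U})}=0$ for all $v$.

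Next I would pass $\mathrm{P}_T$ to its adjoint. A direct computation of the $\mathbb{W}^*$--$\mathbb{W}$ pairing shows $(\mathrm{P}_T^*\phi)(s)=\mathrm{B}^*\mathscr{G}_{\alpha,\beta}^\nu(T-s)^*\phi$ for $\phi\in\mathbb{W}^*$ (here $\mathbb{U}$ is identified with its dual). The optimality condition thus reads $\int_0^T\big\langle\mathrm{B}^*\mathscr{G}_{\alpha,\beta}^\nu(T-s)^*\mathscr{J}[w(T)-\zeta_1]+\lambda\,u(s),\,v(s)\big\rangle\mathrm{d}s=0$ for every $v\in\mathrm{L}^2(J;\mathbb{U})$, and by the fundamental lemma of the calculus of variations this forces the pointwise identity $u(s)=-\tfrac1\lambda\,\mathrm{B}^*\mathscr{G}_{\alpha,\beta}^\nu(T-s)^*\mathscr{J}[w(T)-\zeta_1]$ for a.e. $s\in J$.

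To recast this in the asserted closed form, I would reinsert this expression for $u$ into $w(T)=\mathscr{G}_{\alpha,\beta}^\nu(T)\zeta+\mathrm{P}_Tu$ and use $\Upsilon_0^T=\mathrm{P}_T\mathrm{P}_T^*$ to get $w(T)=\mathscr{G}_{\alpha,\beta}^\nu(T)\zeta-\tfrac1\lambda\,\Upsilon_0^T\mathscr{J}[w(T)-\zeta_1]$. Subtracting $\zeta_1$ and setting $y:=w(T)-\zeta_1$ yields the implicit equation $(\lambda\mathrm{I}+\Upsilon_0^T\mathscr{J})y=-\lambda\, l(w(\cdot))$, where $l(w(\cdot))=\zeta_1-\mathscr{G}_{\alpha,\beta}^\nu(T)\zeta$. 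Inverting with $\mathrm{R}(\lambda,\Upsilon_0^T)=(\lambda\mathrm{I}+\Upsilon_0^T\mathscr{J})^{-1}$, which is well-defined by Lemma \ref{lem2.16}, and invoking the degree-one homogeneity of the single-valued duality map $\mathscr{J}$ (and hence of $\mathrm{R}(\lambda,\Upsilon_0^T)$), I obtain $y=-\lambda\,\mathrm{R}(\lambda,\Upsilon_0^T)l(w(\cdot))$. Applying $\mathscr{J}$ once more gives $\mathscr{J}[w(T)-\zeta_1]=-\lambda\,\mathscr{J}\big[\mathrm{R}(\lambda,\Upsilon_0^T)l(w(\cdot))\big]$; substituting this into the pointwise formula cancels the factors $-1/\lambda$ and $-\lambda$ and produces precisely the claimed expression for $u$.

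The steps demanding the most care are the two appearances of the duality map. First, the identification of the terminal variation with $\mathscr{J}$ relies on the G\^ateaux differentiability of $p$, which is available because $\mathbb{W}$ has been renormed to be smooth and $\mathscr{J}$ is single-valued (Remark \ref{r1}); without single-valuedness the first-order condition would have to be phrased as an inclusion. Second, the homogeneity argument that lets the scalar $-\lambda$ be pulled through $\mathrm{R}(\lambda,\Upsilon_0^T)$ and then through $\mathscr{J}$ must be verified directly from the definition $\mathscr{J}[w]=\{w^*:\langle w^*,w\rangle=\|w\|_{\mathbb{W}}^2=\|w^*\|_{\mathbb{W}^*}^2\}$, since $\mathscr{J}$ and $\mathrm{R}(\lambda,\Upsilon_0^T)$ are genuinely nonlinear and this step cannot appeal to linearity.
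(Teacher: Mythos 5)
Your proof is correct and follows essentially the same route that the paper outsources to \cite{MCRF2021}: compute the first variation of $\mathscr{H}$ using the G\^ateaux derivative $\langle\mathscr{J}[w],\cdot\rangle$ of $\tfrac12\|\cdot\|_{\mathbb{W}}^2$, identify $\mathrm{P}_T^*$, and solve the resulting implicit equation $(\lambda\mathrm{I}+\Upsilon_0^{T}\mathscr{J})(w(T)-\zeta_1)=-\lambda\, l(w(\cdot))$ via Lemma \ref{lem2.16} together with the degree-one homogeneity of $\mathscr{J}$. The identity $w(T)-\zeta_1=-\lambda\mathrm{R}(\lambda,\Upsilon_0^{T})l(w(\cdot))$ that you obtain along the way is precisely the feedback relation the paper reuses in \eqref{4.35} for the semilinear problem, which corroborates your signs and scalings.
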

This lemma can be proven by adapting the technique used in \cite[Lemma 3.4]{MCRF2021}.

Subsequently, we present the approximate controllability result of the linear control system \eqref{LEq2} by using the above control. We first define the problem of approximate controllability of the control system.
\begin{Def}
    The semilinear system \eqref{SEq} (or linear system \eqref{LEq2}) is approximately controllable over $J$, if for any given initial value $\zeta\in\mathbb{W}$ and any desired state $\zeta_1\in\mathbb{W}$, and for every $\epsilon>0$, we can find a mild solution of \eqref{SEq} corresponding to a suitable control $u\in\mathrm{L}^2(J;\mathbb{U})$ such that $$\|w(T)-\zeta_1\|_{\mathbb{W}}<\epsilon.$$
\end{Def}
\begin{lem}\label{lem3.3}
	The linear control system \eqref{LEq2} is approximately controllable on $J$ if and only if 
    $$\lim_{\lambda\to 0}\lambda\mathrm{R}(\lambda,\Upsilon_{0}^{T})=0, \ \mbox{in the norm topology of}\ \mathbb{W}.$$
\end{lem}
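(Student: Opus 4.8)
The plan is to reduce the approximate controllability of \eqref{LEq2} to the asymptotic behaviour, as $\lambda\to0^+$, of the terminal error $w_\lambda(T)-\zeta_1$ produced by the explicit optimal control of Lemma \ref{lem3.2}. First I would fix $\zeta,\zeta_1\in\mathbb{W}$, write $l:=\zeta_1-\mathscr{G}_{\alpha,\beta}^\nu(T)\zeta$, and feed the optimal control
$$u_\lambda(t)=\mathrm{B}^{*}\mathscr{G}_{\alpha,\beta}^\nu(T-t)^*\mathscr{J}\left[\mathrm{R}(\lambda,\Upsilon_0^{T})l\right]$$
into the mild-solution formula for \eqref{LEq2}. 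Using $\mathrm{B}u_\lambda(s)=\mathrm{B}\mathrm{B}^*\mathscr{G}_{\alpha,\beta}^\nu(T-s)^*\mathscr{J}[\mathrm{R}(\lambda,\Upsilon_0^T)l]$ together with the definition of $\Upsilon_0^T$ in \eqref{Copt}, the convolution integral collapses to $\Upsilon_0^T\mathscr{J}[\mathrm{R}(\lambda,\Upsilon_0^T)l]$, so that $w_\lambda(T)=\mathscr{G}_{\alpha,\beta}^\nu(T)\zeta+\Upsilon_0^T\mathscr{J}[\mathrm{R}(\lambda,\Upsilon_0^T)l]$. Writing $y_\lambda:=\mathrm{R}(\lambda,\Upsilon_0^T)l$ and applying the relation $(\lambda\mathrm{I}+\Upsilon_0^T\mathscr{J})y_\lambda=l$ coming from the definition of the resolvent in \eqref{Copt}, I obtain $\Upsilon_0^T\mathscr{J}[y_\lambda]=l-\lambda y_\lambda$, and hence the central identity
$$w_\lambda(T)-\zeta_1=-\lambda\mathrm{R}(\lambda,\Upsilon_0^T)l.$$

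The sufficiency direction is then immediate: assuming $\lambda\mathrm{R}(\lambda,\Upsilon_0^T)\to0$ in the norm topology of $\mathbb{W}$, the central identity gives $\|w_\lambda(T)-\zeta_1\|_{\mathbb{W}}=\|\lambda\mathrm{R}(\lambda,\Upsilon_0^T)l\|_{\mathbb{W}}\to0$, so for any $\epsilon>0$ a sufficiently small $\lambda$ furnishes a control $u_\lambda\in\mathrm{L}^2(J;\mathbb{U})$ with $\|w_\lambda(T)-\zeta_1\|_{\mathbb{W}}<\epsilon$, i.e. \eqref{LEq2} is approximately controllable.

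For the necessity direction I would argue through the minimality of $u_\lambda$. Fix an arbitrary $h\in\mathbb{W}$ and run the construction with $\zeta=0$ and $\zeta_1=h$, so that $l=h$ and $w_\lambda(T)=\mathrm{P}_Tu_\lambda$. By approximate controllability, for a given $\epsilon>0$ there is a control $\bar u\in\mathrm{L}^2(J;\mathbb{U})$ whose associated state $\bar w$ satisfies $\|\bar w(T)-h\|_{\mathbb{W}}<\epsilon$. Since $u_\lambda$ minimizes the cost functional $\mathscr{H}$ of \eqref{CF} (Theorem \ref{exi} and Lemma \ref{lem3.2}), the comparison $\mathscr{H}(w_\lambda,u_\lambda)\le\mathscr{H}(\bar w,\bar u)$ yields $\|w_\lambda(T)-h\|_{\mathbb{W}}^2\le\epsilon^2+\lambda\|\bar u\|_{\mathrm{L}^2(J;\mathbb{U})}^2$. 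As $\|\bar u\|$ is independent of $\lambda$, letting $\lambda\to0^+$ gives $\limsup_{\lambda\to0^+}\|w_\lambda(T)-h\|_{\mathbb{W}}^2\le\epsilon^2$, and since $\epsilon$ is arbitrary, $\|w_\lambda(T)-h\|_{\mathbb{W}}\to0$. Invoking the central identity once more, $\|\lambda\mathrm{R}(\lambda,\Upsilon_0^T)h\|_{\mathbb{W}}\to0$ for every $h\in\mathbb{W}$, which is exactly the asserted convergence.

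The main obstacle, and the step to handle with care, is the derivation of the central identity in a genuine Banach-space setting, where $\mathscr{J}$ is only the (single-valued) duality mapping and $\mathrm{R}(\lambda,\Upsilon_0^T)=(\lambda\mathrm{I}+\Upsilon_0^T\mathscr{J})^{-1}$ is nonlinear: the collapse of the convolution to $\Upsilon_0^T\mathscr{J}[\cdot]$ and the inversion of $(\lambda\mathrm{I}+\Upsilon_0^T\mathscr{J})$ must be justified via Lemma \ref{lem2.16} (invertibility together with the bound \eqref{ee1}) and the single-valuedness recorded in Remark \ref{r1}. The estimate \eqref{ee1} moreover keeps $\{\lambda\mathrm{R}(\lambda,\Upsilon_0^T)h\}$ bounded, which makes the necessity argument clean; everything else is bookkeeping with the mild-solution representation.
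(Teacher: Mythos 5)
Your argument is correct and is essentially the approach the paper relies on: the paper omits a proof and refers to a variational argument of Mahmudov type (via the LQR problem of Theorem \ref{exi} and the optimal control of Lemma \ref{lem3.2}), and your central identity $w_\lambda(T)-\zeta_1=-\lambda\mathrm{R}(\lambda,\Upsilon_0^T)l$ is exactly the computation the paper itself carries out in \eqref{4.35} for the semilinear case. The sufficiency step and the necessity step via minimality of $u_\lambda$ are both sound, with the invertibility and bound of Lemma \ref{lem2.16} covering the only delicate point in the Banach-space setting.
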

A proof of the above lemma can be obtained through a straightforward modification of Theorem 3.2, as presented in \cite{NAHS2021}.

We now discuss some other characteristics of the approximate controllability of the linear control problem \eqref{LEq2}.
\begin{lem}\label{lem3.4}
    The system \eqref{LEq2} is approximately controllable on $J$ if and only if
$$\mathrm{B}^*\mathscr{G}_{\alpha,\beta}^\nu(T-t)^*w^*=0,\ t\in J  \implies w^*=0, \ \mbox{for}\ w\in\mathbb{W}^*.$$
\end{lem}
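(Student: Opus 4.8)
The plan is to prove Lemma \ref{lem3.4} by connecting the abstract criterion from Lemma \ref{lem3.3} to the dual (adjoint) characterization, exploiting the structure of the operator $\Upsilon_0^T = \mathrm{P}_T \mathrm{P}_T^*$. Since $\mathbb{U}$ is a separable Hilbert space identified with its dual, the essential observation is that for any $w^*\in\mathbb{W}^*$ we have the identity
\begin{align*}
\langle \Upsilon_0^T w^*, w^*\rangle = \int_0^T \|\mathrm{B}^*\mathscr{G}_{\alpha,\beta}^\nu(T-t)^* w^*\|_{\mathbb{U}}^2\,\mathrm{d}t.
\end{align*}
This shows that $\langle \Upsilon_0^T w^*, w^*\rangle = 0$ is equivalent to $\mathrm{B}^*\mathscr{G}_{\alpha,\beta}^\nu(T-t)^* w^* = 0$ for a.e.\ $t\in J$, and by strong continuity of the resolvent family (established in the earlier lemmas) this holds for all $t\in J$. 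Hence the kernel condition in the statement is precisely the statement that $\Upsilon_0^T$ is injective on $\mathbb{W}^*$ in the sense that its associated quadratic form has trivial null set.

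Next I would link this to the resolvent limit in Lemma \ref{lem3.3}. The approach is to show the chain of equivalences: approximate controllability $\iff \lim_{\lambda\to 0}\lambda\mathrm{R}(\lambda,\Upsilon_0^T) = 0$ (Lemma \ref{lem3.3}) $\iff$ the kernel condition. For the second equivalence, I would argue as follows. Fix $y\in\mathbb{W}$ and set $y_\lambda := \lambda\mathrm{R}(\lambda,\Upsilon_0^T)y$, so that $y_\lambda + \Upsilon_0^T\mathscr{J}[y_\lambda]/\lambda \cdot \lambda = y$, i.e.\ $y_\lambda + \Upsilon_0^T\mathscr{J}[y_\lambda] = y$ after rescaling appropriately through the definition $\mathrm{R}(\lambda,\Upsilon_0^T) = (\lambda\mathrm{I}+\Upsilon_0^T\mathscr{J})^{-1}$. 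Pairing the resolvent identity $\lambda y_\lambda + \Upsilon_0^T\mathscr{J}[y_\lambda] = y$ against $\mathscr{J}[y_\lambda]$ and using the definition of the duality map gives
\begin{align*}
\lambda\|y_\lambda\|_{\mathbb{W}}^2 + \langle \Upsilon_0^T\mathscr{J}[y_\lambda],\mathscr{J}[y_\lambda]\rangle = \langle y,\mathscr{J}[y_\lambda]\rangle \le \|y\|_{\mathbb{W}}\|y_\lambda\|_{\mathbb{W}},
\end{align*}
where I use the bound $\|y_\lambda\|_{\mathbb{W}}\le\|y\|_{\mathbb{W}}$ from Lemma \ref{lem2.16}. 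This yields uniform boundedness of $\{\mathscr{J}[y_\lambda]\}$ and control on the quadratic form term.

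The heart of the argument is then the following: if the kernel condition holds, I would show $y_\lambda\to 0$ as $\lambda\to 0$ by a compactness-plus-uniqueness scheme. Since $\mathbb{W}$ is super-reflexive, $\{\mathscr{J}[y_\lambda]\}$ is bounded in the uniformly convex dual $\mathbb{W}^*$ (Remark \ref{r1}), so along a subsequence $\mathscr{J}[y_{\lambda}]\xrightharpoonup{w} w^*$. Passing to the limit in the quadratic-form inequality forces $\langle\Upsilon_0^T w^*, w^*\rangle = 0$, whence by the first paragraph $\mathrm{B}^*\mathscr{G}_{\alpha,\beta}^\nu(T-t)^* w^* = 0$ on $J$, and the kernel hypothesis gives $w^* = 0$. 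Feeding $w^*=0$ back shows $\|y_\lambda\|_{\mathbb{W}}\to 0$, which is the resolvent limit condition. For the converse direction, if the kernel condition fails there exists $w^*\ne 0$ with $\mathrm{B}^*\mathscr{G}_{\alpha,\beta}^\nu(T-t)^* w^* = 0$; choosing $y$ in the direction aligned with $w^*$ via the duality map produces a nonvanishing limit, contradicting $\lim_{\lambda\to 0}\lambda\mathrm{R}(\lambda,\Upsilon_0^T) = 0$.

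The main obstacle I anticipate is the nonlinearity of the resolvent $\mathrm{R}(\lambda,\Upsilon_0^T)$ coming from the duality map $\mathscr{J}$, which prevents a purely Hilbert-space spectral argument. The delicate point is justifying the passage to the limit in the weak convergence of $\mathscr{J}[y_\lambda]$: one must invoke the demicontinuity and uniform continuity properties of $\mathscr{J}$ recorded in Remark \ref{r1} to transfer between convergence of $y_\lambda$ and of $\mathscr{J}[y_\lambda]$, and to conclude that the weak limit of the duality images is indeed the duality image of the limit. This is exactly where super-reflexivity and the uniform smoothness/convexity renorming are essential, and it is the step that would require the most care rather than routine estimation.
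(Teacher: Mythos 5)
The paper states Lemma \ref{lem3.4} without proof because the intended argument is the standard two--line one, which your proposal bypasses: approximate controllability of \eqref{LEq2} is equivalent to density in $\mathbb{W}$ of the range of $\mathrm{P}_T$ (the term $\mathscr{G}_{\alpha,\beta}^\nu(T)\zeta$ is a fixed translate), and by Hahn--Banach a subspace is dense iff its annihilator is trivial; since
\begin{align*}
\langle w^*,\mathrm{P}_T u\rangle=\int_0^T\langle \mathrm{B}^*\mathscr{G}_{\alpha,\beta}^\nu(T-t)^*w^*,u(t)\rangle_{\mathbb{U}}\,\mathrm{d}t,
\end{align*}
the annihilator of $\mathcal{R}(\mathrm{P}_T)$ is exactly $\{w^*:\mathrm{B}^*\mathscr{G}_{\alpha,\beta}^\nu(T-\cdot)^*w^*\equiv 0\}$. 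This requires neither Lemma \ref{lem3.3} nor any property of the duality map, and it is what Remark \ref{rem3.4} implicitly records through the quadratic--form identity you also write down. Your detour through the resolvent condition is legitimate in principle, but it makes the lemma depend on Lemma \ref{lem3.3} (itself only cited) and on the geometry of $\mathbb{W}$, neither of which is needed.

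More importantly, as written your key step fails because of a scaling error. With $y_\lambda=\lambda\mathrm{R}(\lambda,\Upsilon_0^T)y$ and $\mathscr{J}$ positively homogeneous, the correct identity is $\lambda y_\lambda+\Upsilon_0^T\mathscr{J}[y_\lambda]=\lambda y$, not $=y$. Pairing your unscaled version with $\mathscr{J}[y_\lambda]$ only gives $\langle\Upsilon_0^T\mathscr{J}[y_\lambda],\mathscr{J}[y_\lambda]\rangle\le\|y\|_{\mathbb{W}}\|y_\lambda\|_{\mathbb{W}}$, i.e.\ mere boundedness of the quadratic form, so ``passing to the limit forces $\langle\Upsilon_0^Tw^*,w^*\rangle=0$'' does not follow from what you wrote. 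With the corrected identity one obtains $\langle\Upsilon_0^T\mathscr{J}[y_\lambda],\mathscr{J}[y_\lambda]\rangle\le\lambda\|y\|_{\mathbb{W}}^2\to0$, and then weak lower semicontinuity of the convex continuous form $w^*\mapsto\int_0^T\|\mathrm{B}^*\mathscr{G}_{\alpha,\beta}^\nu(T-t)^*w^*\|_{\mathbb{U}}^2\,\mathrm{d}t$ along a weak-$*$ convergent subsequence of the bounded family $\{\mathscr{J}[y_\lambda]\}$ yields $\langle\Upsilon_0^Tw^*,w^*\rangle=0$; the kernel hypothesis gives $w^*=0$ and then $\|y_\lambda\|_{\mathbb{W}}^2\le\langle y,\mathscr{J}[y_\lambda]\rangle\to0$, exactly as in the proof of Theorem \ref{thm3.11}. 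Note you do not need the weak limit of $\mathscr{J}[y_\lambda]$ to be $\mathscr{J}$ of the limit of $y_\lambda$ --- any weak-$*$ limit point suffices --- so the difficulty you flag in your final paragraph is not where the real issue lies. For the converse you also need surjectivity of $\mathscr{J}$ (available since $\mathbb{W}$ is reflexive with a strictly convex renorming) to produce $y$ with $\mathscr{J}[y]=w^*$; one then checks directly that $\mathrm{R}(\lambda,\Upsilon_0^T)y=y/\lambda$, so $\lambda\mathrm{R}(\lambda,\Upsilon_0^T)y=y\not\to0$.
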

\begin{rem}\label{rem3.4}
	If the operator $\Upsilon_{0}^{T}$ is positive, then the linear system \eqref{LEq2}  is approximately controllable, and  the converse is also true. The positivity of $\Upsilon_{0}^{T}$ is equivalent to $$ \langle w^*, \Upsilon_{0}^{T}w^*\rangle=0\implies w^*=0.$$ Further we have 
	\begin{align*}
		\langle w^*, \Upsilon_{0}^{T}w^*\rangle =\int_{0}^T\left\|\mathrm{B}^*\mathscr{G}_{\alpha,\beta}^\nu(T-t)^*w^*\right\|_{\mathbb{U}}^2\mathrm{d}t.
	\end{align*}
	From this fact and Lemma \ref{lem3.4}, it is infer that the approximate controllability of the linear system \eqref{LEq2} is equivalent to the operator $\Upsilon_{0}^{T}$ is positive. 
\end{rem}
\subsection{Semilinear control problem} In this subsection, we demonstrate sufficient conditions of the approximate controllability for the semilinear system \eqref{SEq}. To this end, we first derive the existence of a mild solution of the system \eqref{SEq} with the control given by 
\begin{align}\label{Cot}
	u^{\lambda}(t)=u^{\lambda}(t;w)=\mathrm{B}^{*}\mathscr{G}_{\alpha,\beta}^\nu(T-t)^*\mathscr{J}\left[\mathrm{R}(\lambda,\Upsilon_{0}^{T})k(w(\cdot))\right],\ t\in J,
\end{align}
with 
\begin{align}\label{e4.2}
	k(w(\cdot))=\zeta_1-\mathscr{G}_{\alpha,\beta}^\nu(T)\zeta-\int_{0}^{T}\mathscr{G}_{\alpha,\beta}^\nu(t-s)f(s,w(s))\mathrm{d}s,
\end{align}
where $\lambda>0$, $\zeta_1\in\mathbb{W}$.
\begin{theorem}\label{thm4.1}
	If Assumptions \textbf{\textit{(F1)}}-\textbf{\textit{(F2)}} are satisfied, then for every $\lambda > 0$ and any $\zeta_1 \in \mathbb{W}$, the system \eqref{SEq} with control \eqref{Cot} admits at least one mild solution.
\end{theorem}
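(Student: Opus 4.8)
The plan is to recast the problem as a fixed-point equation and invoke the \emph{Schauder fixed point theorem}, mirroring the scheme of Theorem \ref{thm2.9} but with the control \eqref{Cot} now depending on the unknown $w$. For fixed $\lambda>0$, I would define an operator $\Phi_\lambda:C(J;\mathbb{W})\to C(J;\mathbb{W})$ by
\begin{align*}
(\Phi_\lambda w)(t)=\mathscr{G}_{\alpha,\beta}^\nu(t)\zeta+\int_0^t\mathscr{G}_{\alpha,\beta}^\nu(t-s)\mathrm{B}u^{\lambda}(s;w)\,\mathrm{d}s+\int_0^t\mathscr{G}_{\alpha,\beta}^\nu(t-s)f(s,w(s))\,\mathrm{d}s,
\end{align*}
where $u^{\lambda}(\cdot;w)$ is given by \eqref{Cot}. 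A fixed point of $\Phi_\lambda$ is precisely a mild solution of \eqref{SEq} driven by the control \eqref{Cot}, so it suffices to produce one.

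First I would establish that $\Phi_\lambda$ maps a ball $Q_r$ into itself. The crucial observation is that, thanks to Assumption \textbf{\textit{(F2)}}, the control is uniformly bounded independently of $w$: using $\|\mathscr{J}[z]\|_{\mathbb{W}^*}=\|z\|_{\mathbb{W}}$, the resolvent estimate $\|\mathrm{R}(\lambda,\Upsilon_{0}^{T})z\|_{\mathbb{W}}\le\lambda^{-1}\|z\|_{\mathbb{W}}$ extracted from Lemma \ref{lem2.16}, and the bound $\|k(w(\cdot))\|_{\mathbb{W}}\le\|\zeta_1\|_{\mathbb{W}}+N\|\zeta\|_{\mathbb{W}}+N\|\gamma\|_{\mathrm{L}^1(J;\mathbb{R}^+)}$, I obtain
\begin{align*}
\|u^{\lambda}(t;w)\|_{\mathbb{U}}\le\frac{MN}{\lambda}\left(\|\zeta_1\|_{\mathbb{W}}+N\|\zeta\|_{\mathbb{W}}+N\|\gamma\|_{\mathrm{L}^1(J;\mathbb{R}^+)}\right)=:\rho,
\end{align*}
for every $w\in C(J;\mathbb{W})$. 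Substituting this together with \textbf{\textit{(F2)}} into the defining formula yields, exactly as in Step (1) of Theorem \ref{thm2.9}, a radius $r>0$ (depending on $\lambda$) with $\Phi_\lambda(Q_r)\subseteq Q_r$.

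The main obstacle is the continuity of $\Phi_\lambda$, since the control \eqref{Cot} passes through the two nonlinear maps $\mathscr{J}$ and $\mathrm{R}(\lambda,\Upsilon_{0}^{T})$. I would argue as follows: if $w^n\to w$ in $Q_r$, then \textbf{\textit{(F1)}}, \textbf{\textit{(F2)}} and the dominated convergence theorem give $k(w^n(\cdot))\to k(w(\cdot))$ in $\mathbb{W}$. Since this sequence stays bounded, the uniform continuity of $\mathrm{R}(\lambda,\Upsilon_{0}^{T})$ on bounded sets (Lemma \ref{lem2.17}) yields $\mathrm{R}(\lambda,\Upsilon_{0}^{T})k(w^n(\cdot))\to\mathrm{R}(\lambda,\Upsilon_{0}^{T})k(w(\cdot))$ strongly, and then the uniform continuity of $\mathscr{J}$ on bounded subsets (Remark \ref{r1}(ii)) gives $\mathscr{J}[\mathrm{R}(\lambda,\Upsilon_{0}^{T})k(w^n(\cdot))]\to\mathscr{J}[\mathrm{R}(\lambda,\Upsilon_{0}^{T})k(w(\cdot))]$ in $\mathbb{W}^*$. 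Hence $u^{\lambda}(\cdot;w^n)\to u^{\lambda}(\cdot;w)$ uniformly on $J$, and combining this with the convergence of the nonlinear integral term (as in Step (2) of Theorem \ref{thm2.9}) shows $\Phi_\lambda w^n\to\Phi_\lambda w$ in $C(J;\mathbb{W})$.

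Finally, I would prove that $\Phi_\lambda$ is compact by the same two-part argument as in Step (3) of Theorem \ref{thm2.9}: equicontinuity of $\Phi_\lambda(Q_r)$ via the uniform operator continuity of $\mathscr{G}_{\alpha,\beta}^\nu(t)$ for $t>0$ (Lemma \ref{lem2.7}), and relative compactness of $\{(\Phi_\lambda w)(t):w\in Q_r\}$ for each $t$ through the truncation operator $\mathcal{P}^{\eta}$ together with the compactness of $\mathscr{G}_{\alpha,\beta}^\nu(\eta)$ (Lemma \ref{lem2.6}). Because the control term is merely an additional summand $\int_0^t\mathscr{G}_{\alpha,\beta}^\nu(t-s)\mathrm{B}u^{\lambda}(s;w)\,\mathrm{d}s$ with $u^{\lambda}(\cdot;w)$ uniformly bounded in $\mathrm{L}^2(J;\mathbb{U})$ by $\rho$, it is handled precisely as the control term there (equivalently, by the compactness of the operator $Q$ of Lemma \ref{lem2.12}). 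The Arzel\`a--Ascoli theorem then delivers the compactness of $\Phi_\lambda$, and the \emph{Schauder fixed point theorem} provides a fixed point in $Q_r$, which is the desired mild solution of \eqref{SEq} with control \eqref{Cot}.
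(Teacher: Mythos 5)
Your proposal is correct and follows essentially the same route as the paper: the same fixed-point operator, the same invariant-ball estimate via $\|\lambda\mathrm{R}(\lambda,\Upsilon_0^T)y\|_{\mathbb{W}}\le\|y\|_{\mathbb{W}}$ and the bound on $k(w(\cdot))$, the same compactness argument, and the Schauder fixed point theorem. The only (harmless) deviation is in the continuity step: you invoke the uniform continuity of $\mathscr{J}$ on bounded sets (Remark \ref{r1}(ii)) to get strong convergence of $\mathscr{J}[\mathrm{R}(\lambda,\Upsilon_0^T)k(w^n(\cdot))]$ directly, whereas the paper uses only demicontinuity of $\mathscr{J}$ and then upgrades the resulting weak convergence to the needed strong convergence via the compactness of $\mathscr{G}_{\alpha,\beta}^\nu(T-t)^*$; both arguments rest on facts the paper has already established.
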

\begin{proof}
    For each $\lambda>0$, let define an operator $\mathcal{P}^{\lambda}:C(J;\mathbb{W})\to C(J;\mathbb{W})$ such that
		\begin{align}\label{op1}
			(\mathcal{P}^{\lambda}w)(t)=\mathscr{G}_{\alpha,\beta}^\nu(t)\zeta+\int_{0}^{t}\mathscr{G}_{\alpha,\beta}^\nu(t-s)\left[\mathrm{B}u^{\lambda}(s)+f(s,w(s))\right]\mathrm{d}s, t\in J.
		\end{align}
It is evident that the existence of a mild solution for the system \eqref{SEq} is equivalent to the operator $\mathcal{P}^\lambda$ admitting a fixed point. Similar as Theorem \ref{thm2.9}, the proof is given in several steps.  
\vskip 0.1in 
\noindent\textbf{Step (1): } For each $\lambda>0$, there exists $r_{\lambda}$ such that $\mathcal{P}^{\lambda}(Q_{r_{\lambda}})\subseteq Q_{r_{\lambda}}$, where the set $Q_{r_{\lambda}}:=\{w\in C(J;\mathbb{W}): \left\|x\right\|_{C(J;\mathbb{W})}\le r_{\lambda}\}$. Let us compute
\begin{align}\label{e3.10}
	\left\|(\mathcal{P}^{\lambda}w)(t)\right\|_\mathbb{W}&\le N\|\zeta\|_{\mathbb{W}}+N\int_{0}^{t}\|f(s,w(s)\|_{\mathbb{W}}\mathrm{d}s+\left\|\int_{0}^{t}\mathscr{G}_{\alpha,\beta}^\nu(t-s)\mathrm{B}u^{\lambda}(s)\mathrm{d}s\right\|_{\mathbb{W}}\nonumber\\&\le N\|\zeta\|_{\mathbb{W}}+N\|\gamma\|_{\mathrm{L}^1(J;\mathbb{R}^+)}+\left\|\int_{0}^{t}\mathscr{G}_{\alpha,\beta}^\nu(t-s)\mathrm{B}u^{\lambda}(s)\mathrm{d}s\right\|_{\mathbb{W}}.
\end{align}
Using the properties of duality mapping, we estimate 
\begin{align}\label{e3.11}
	\left\|\int_{0}^{t}\mathscr{G}_{\alpha,\beta}^\nu(t-s)\mathrm{B}u^{\lambda}(s)\mathrm{d}s\right\|_{\mathbb{W}}&=\left\|\int_{0}^{t}\mathscr{G}_{\alpha,\beta}^\nu(t-s)\mathrm{B}\mathrm{B}^{*}\mathscr{G}_{\alpha,\beta}^\nu(T-t)^*\mathscr{J}\left[\mathrm{R}(\lambda,\Upsilon_{0}^{T})k(w(\cdot))\right]\right\|_{\mathbb{W}}\nonumber\\&\le\frac{(NM)^{2}T}{\lambda}\left\|\mathscr{J}\left[\lambda\mathrm{R}(\lambda,\Upsilon_{0}^{T})k(w(\cdot))\right]\right\|_{\mathbb{W}^{*}}\nonumber\\&= \frac{(NM)^{2}T}{\lambda}\left\|\lambda\mathrm{R}(\lambda,\Upsilon_{0}^{T})k(w(\cdot))\right\|_{\mathbb{W}}\nonumber\\&\le \frac{(NM)^{2}T}{\lambda}\|k(w(\cdot))\|_{\mathbb{W}}\nonumber\\&\le \frac{(NM)^{2}T}{\lambda}\left[\|\zeta_1\|_{\mathbb{W}}+N\|\zeta\|_{\mathbb{W}}+N\|\gamma\|_{\mathrm{L}^1(J;\mathbb{R}^+)}\right].
\end{align}
 Combining the estimates \eqref{e3.10} and \eqref{e3.11}, we obtain
\begin{align*}
	\left\|(\mathcal{P}^{\lambda}w)(t)\right\|_\mathbb{W}&\le N\|\zeta\|_{\mathbb{W}}+N\|\gamma\|_{\mathrm{L}^1(J;\mathbb{R}^+)}+ \frac{(NM)^{2}T}{\lambda}\left[\|\zeta_1\|_{\mathbb{W}}+N\|\zeta\|_{\mathbb{W}}+N\|\gamma\|_{\mathrm{L}^1(J;\mathbb{R}^+)}\right]=r_{\lambda}.
\end{align*}
From this we can conclude that for each $\lambda>0$, there is an  $r_{\lambda}>0$ such that $ \mathcal{P}^{\lambda}(Q_{r_{\lambda}})\subseteq Q_{r_{\lambda}}$.
\vskip 0.1in 
\noindent\textbf{Step (2): } We claim that the map $ \mathcal{P}^{\lambda}$ is continuous. To demonstrate this claim, let us consider a sequence $\{{w}^n\}^\infty_{n=1}\subseteq Q_r$ such that ${w}^n\rightarrow {w}\ \mbox{in}\ Q_r,$ that is,
$$\lim\limits_{n\rightarrow \infty}\left\|w^n-w\right\|_{C(J;\mathbb{W})}=0.$$
Using Assumption \textbf{\textit{(F1)}}-\textbf{\textit{(F2)}} together with Lebesgue's dominate convergence theorem,  we deduce that
\begin{align*}
	\left\|k(w^{n}(\cdot))-k(w(\cdot))\right\|_{\mathbb{W}}&\le \left\|\int^{T}_{0}\mathscr{G}_{\alpha,\beta}^\nu(T-s)\left[f(s, w^n(s))-f(s,w(s))\right]\mathrm{d}s\right\|_{\mathbb{W}}\nonumber\\&\le N\int^{T}_{0}\left\|f(s, w^n(s))-f(s,w(s))\right\|_{\mathbb{W}}\mathrm{d}s\to 0 \ \mbox{as}\ n\to\infty.
\end{align*}
Using the uniform continuous property of the mapping $\mathrm{R}(\lambda,\Upsilon_{0}^{T}):\mathbb{W}\to\mathbb{W}$ on every bounded subset of $\mathbb{W}$ (see Lemma \ref{lem2.17}), we arrive
$$\mathrm{R}(\lambda,\Upsilon_{0}^{T})k(w^{n}(\cdot))\to\mathrm{R}(\lambda,\Upsilon_{0}^{T})k(w(\cdot))\ \mbox{in}\  \mathbb{W} \ \mbox{as}\ n\to\infty.$$ Since the duality mapping $\mathscr{J}:\mathbb{W}\to\mathbb{W}^{*}$  is demicontinuous, then we have
\begin{align*}
	\mathscr{J}\left[\mathrm{R}(\lambda,\Upsilon_{0}^{T})k(w^{n}(\cdot))\right]\xrightharpoonup{w}\mathscr{J}\left[\mathrm{R}(\lambda,\Upsilon_{0}^{T})k(w(\cdot))\right]\ \text{ in }\ \mathbb{W}^{*} \ \text{as} \ n\to\infty.
\end{align*}
	Since the operator $\mathscr{G}_{\alpha,\beta}^\nu(t)$ is compact for each $t > 0$, it follows that the adjoint operator $\mathscr{G}_{\alpha,\beta}^\nu(t)^*$ is also compact for each $t > 0$. Consequently, by combining the weak convergence with the compactness of the operator $\mathscr{G}_{\alpha,\beta}^\nu(\cdot)^*$, we can readily conclude
\begin{align}\label{e4.4}
	\left\|\mathscr{G}_{\alpha,\beta}^\nu(T-t)^*\mathscr{J}\left[\mathrm{R}(\lambda,\Upsilon_{0}^{T})k(w^n(\cdot))\right]\!\!-\!\mathscr{G}_{\alpha,\beta}^\nu(T-t)^*\mathscr{J}\left[\mathrm{R}(\lambda,\Upsilon_{0}^{T})k(w(\cdot))\right]\right\|_{\mathbb{W^*}}\!\!\!\!\!\to 0 \ \text{as} \ n\to\infty,
\end{align}
for all $t\in [0,T)$. Using \eqref{Cot} and \eqref{e4.4}, it follows
\begin{align}\label{4.4.9}
	&\left\|(u^{\lambda})^{n}(t)-u^{\lambda}(t)\right\|_{\mathbb{U}}\to 0 \ \text{ as } \ n\to\infty, \text{ uniformly for all }\ t\in [0,T).
\end{align}
By \eqref{4.4.9}, Assumption  \textbf{\textit{(F1)}}-\textbf{\textit{(F2)}} and Lebesgue's dominate convergence theorem, we obtain 
\begin{align*}
	&\left\|(\mathcal{P}^{\lambda}w^{n})(t)-(\mathcal{P}^{\lambda}w)(t)\right\|_{\mathbb{W}}\nonumber\\&\leq\left\|\int_{0}^{t}\mathscr{G}_{\alpha,\beta}^\nu(t-s)\mathrm{B}\left[(u^{\lambda})^n(s)-u^{\lambda}(s)\right]\mathrm{d}s\right\|_{\mathbb{W}}+\left\|\int_{0}^{t}\mathscr{G}_{\alpha,\beta}^\nu(t-s)\left[f(s, w^n(s))-f(s,w(s))\right]\mathrm{d}s\right\|_{\mathbb{W}}\nonumber\\&\leq NMT\esssup_{t\in J}\left\|(u^{\lambda})^n(t)-u^{\lambda}(t)\right\|_{\mathbb{U}} +N\int_{0}^{t}\left\|\left[f(s,w^n(s))-f(s,w(s))\right]\right\|_{\mathbb{W}}\mathrm{d}s\nonumber\\&\to 0 \ \text{ as }\ n\to\infty,
\end{align*}
for each $t\in J$. Therefore, the map $\mathcal{P}^{\lambda}$ is continuous.
\vskip 0.1in 
\noindent\textbf{Step (3): } We now demonstrate that the operator $ \mathcal{P}^{\lambda}$ is compact for $\lambda>0$. By applying the estimate
\begin{align*}
    \|u^{\lambda}(t)\|_{\mathbb{U}}\le \frac{NM}{\lambda}\left[\|\zeta_1\|_{\mathbb{W}}+N\|\zeta\|_{\mathbb{W}}+N\|\gamma\|_{\mathrm{L}^1(J;\mathbb{R}^+)}\right],
\end{align*}
for all $t\in J$ and following the approach outlined in Step (3) of the proof of Theorem \ref{thm2.9} with the necessary modification, we can establish that $ \mathcal{P}^{\lambda}$ is compact.

Further, by applying the \emph{Schauder fixed point theorem}, we deduce that the operator $\mathcal{P}^{\lambda}$ has a fixed point in $Q_{r(\lambda)}$ for every $\lambda > 0$. Hence, the proof is completed. 
\end{proof}
We now turn our attention to investigating the approximate controllability of the semilinear system \eqref{SEq}, as presented in the theorem below.
\begin{theorem}\label{thm3.11}
  If Assumption \textbf{\textit{(F1)}}-\textbf{\textit{(F2)}} are verified and the linear system \eqref{LEq2} is approximately controllable, then the system \eqref{SEq} is approximately controllable. 
\end{theorem}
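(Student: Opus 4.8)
The plan is to feed the family of controls $u^\lambda$ from \eqref{Cot} into the semilinear system, use the mild solutions $w^\lambda$ furnished by Theorem \ref{thm4.1}, and prove that $w^\lambda(T)\to\zeta_1$ in $\mathbb{W}$ as $\lambda\to 0^+$; by the definition of approximate controllability this is exactly what is required. I would begin by evaluating the mild solution \eqref{Seq1} with control \eqref{Cot} at $t=T$. Writing $g^\lambda:=\mathrm{R}(\lambda,\Upsilon_0^T)k(w^\lambda(\cdot))$ and using that the factor $\mathscr{J}[g^\lambda]$ does not depend on the integration variable, the definition of $\Upsilon_0^T$ in \eqref{Copt} gives $\int_0^T\mathscr{G}_{\alpha,\beta}^\nu(T-s)\mathrm{B}u^\lambda(s)\mathrm{d}s=\Upsilon_0^T\mathscr{J}[g^\lambda]$. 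Since $g^\lambda=(\lambda\mathrm{I}+\Upsilon_0^T\mathscr{J})^{-1}k(w^\lambda(\cdot))$, we have $\Upsilon_0^T\mathscr{J}[g^\lambda]=k(w^\lambda(\cdot))-\lambda g^\lambda$, and substituting this together with \eqref{e4.2} into \eqref{Seq1} cancels the terms $\mathscr{G}_{\alpha,\beta}^\nu(T)\zeta$, $\zeta_1$ and $\int_0^T\mathscr{G}_{\alpha,\beta}^\nu(T-s)f(s,w^\lambda(s))\mathrm{d}s$, leaving the key identity
\begin{align}\label{sketch-id}
    w^\lambda(T)-\zeta_1=-\lambda g^\lambda=-a^\lambda,\qquad a^\lambda:=\lambda\,\mathrm{R}(\lambda,\Upsilon_0^T)k(w^\lambda(\cdot)).
\end{align}

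Next I would pin down the limit of the data $k(w^\lambda)$. By \textbf{\textit{(F2)}} the set $\{f(\cdot,w^\lambda(\cdot))\}$ is dominated by the fixed $\gamma\in\mathrm{L}^1(J;\mathbb{R}^+)$, hence uniformly integrable, so by the Dunford--Pettis theorem there are a sequence $\lambda_n\downarrow 0$ and a limit $f^*\in\mathrm{L}^1(J;\mathbb{W})$ with $f(\cdot,w^{\lambda_n}(\cdot))\rightharpoonup f^*$ in $\mathrm{L}^1(J;\mathbb{W})$. Setting $p:=\zeta_1-\mathscr{G}_{\alpha,\beta}^\nu(T)\zeta-\int_0^T\mathscr{G}_{\alpha,\beta}^\nu(T-s)f^*(s)\mathrm{d}s$, I would show $k(w^{\lambda_n}(\cdot))\to p$ \emph{strongly} in $\mathbb{W}$. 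This is the first delicate point: the convolution against $\mathscr{G}_{\alpha,\beta}^\nu$ must upgrade weak $\mathrm{L}^1$-convergence to norm convergence. I would split $[0,T]=[0,T-\delta]\cup[T-\delta,T]$, control the tail uniformly in $n$ by the uniform integrability of $\gamma$, and on $[0,T-\delta]$ invoke the compactness of $\mathscr{G}_{\alpha,\beta}^\nu(t)$ for $t>0$ (Lemma \ref{lem2.6}), which converts the weak convergence into a strong one.

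It then remains to prove $a^{\lambda_n}\to 0$. From Lemma \ref{lem2.16}, $\|a^\lambda\|_{\mathbb{W}}\le\|k(w^\lambda(\cdot))\|_{\mathbb{W}}$, so $\{a^{\lambda_n}\}$ is bounded. Using the positive homogeneity $\mathscr{J}[g^\lambda]=\lambda^{-1}\mathscr{J}[a^\lambda]$ in the resolvent relation of \eqref{sketch-id} yields $\Upsilon_0^T\mathscr{J}[a^\lambda]=\lambda\bigl(k(w^\lambda(\cdot))-a^\lambda\bigr)$, whose right-hand side tends to $0$ strongly; consequently $\langle\mathscr{J}[a^{\lambda_n}],\Upsilon_0^T\mathscr{J}[a^{\lambda_n}]\rangle\le\|a^{\lambda_n}\|_{\mathbb{W}}\,\|\Upsilon_0^T\mathscr{J}[a^{\lambda_n}]\|_{\mathbb{W}}\to 0$. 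Since $\mathbb{W}$ is reflexive and $\{\mathscr{J}[a^{\lambda_n}]\}$ is bounded in $\mathbb{W}^*$, any weak limit $\eta$ of a subsequence satisfies $\langle\eta,\Upsilon_0^T\eta\rangle\le\liminf\langle\mathscr{J}[a^{\lambda_n}],\Upsilon_0^T\mathscr{J}[a^{\lambda_n}]\rangle=0$ by weak lower semicontinuity of the convex map $w^*\mapsto\langle w^*,\Upsilon_0^T w^*\rangle=\int_0^T\|\mathrm{B}^*\mathscr{G}_{\alpha,\beta}^\nu(T-t)^*w^*\|_{\mathbb{U}}^2\mathrm{d}t$; the positivity of $\Upsilon_0^T$, which is equivalent to the assumed approximate controllability of the linear system \eqref{LEq2} by Remark \ref{rem3.4}, then forces $\eta=0$, so $\mathscr{J}[a^{\lambda_n}]\rightharpoonup 0$ in $\mathbb{W}^*$. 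Finally, writing $\|a^\lambda\|_{\mathbb{W}}^2=\langle\mathscr{J}[a^\lambda],a^\lambda\rangle$, using $a^\lambda=k(w^\lambda(\cdot))-\lambda^{-1}\Upsilon_0^T\mathscr{J}[a^\lambda]$ and $\langle\mathscr{J}[a^\lambda],\Upsilon_0^T\mathscr{J}[a^\lambda]\rangle\ge 0$, I obtain
\begin{align}\label{sketch-bd}
    \|a^{\lambda_n}\|_{\mathbb{W}}^2\le\langle\mathscr{J}[a^{\lambda_n}],k(w^{\lambda_n}(\cdot))\rangle=\langle\mathscr{J}[a^{\lambda_n}],p\rangle+\langle\mathscr{J}[a^{\lambda_n}],k(w^{\lambda_n}(\cdot))-p\rangle.
\end{align}
The first pairing tends to $0$ since $\mathscr{J}[a^{\lambda_n}]\rightharpoonup 0$, and the second is bounded by $\|a^{\lambda_n}\|_{\mathbb{W}}\|k(w^{\lambda_n}(\cdot))-p\|_{\mathbb{W}}\to 0$; hence $\|a^{\lambda_n}\|_{\mathbb{W}}\to 0$ and, by \eqref{sketch-id}, $w^{\lambda_n}(T)\to\zeta_1$. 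Because the limit is $\zeta_1$ independently of the extracted subsequence and of $f^*$, a standard subsequence argument upgrades this to $\lim_{\lambda\to 0^+}\|w^\lambda(T)-\zeta_1\|_{\mathbb{W}}=0$, i.e.\ the desired approximate controllability.

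The main obstacle is the pair of weak-to-strong upgrades: passing from weak $\mathrm{L}^1$-convergence of $f(\cdot,w^{\lambda_n})$ to norm convergence of $k(w^{\lambda_n})$ in the second step (handled by tail-splitting and the compactness of the resolvent), and extracting genuine norm convergence $a^{\lambda_n}\to 0$ from the merely weak information $\mathscr{J}[a^{\lambda_n}]\rightharpoonup 0$ in the third. The latter is precisely where the super-reflexivity of $\mathbb{W}$, the monotonicity and demicontinuity of the duality map $\mathscr{J}$ (Remark \ref{r1}), and the positivity of $\Upsilon_0^T$ (Remark \ref{rem3.4}) become indispensable, since the nonlinearity of $\mathrm{R}(\lambda,\Upsilon_0^T)$ rules out the simpler linear splitting available in the Hilbert-space case.
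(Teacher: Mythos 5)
Your proposal is correct and follows essentially the same route as the paper's proof: the same terminal identity $w^{\lambda}(T)-\zeta_1=-\lambda\mathrm{R}(\lambda,\Upsilon_0^T)k(w^{\lambda}(\cdot))$, the Dunford--Pettis extraction with the compactness-based upgrade to strong convergence of $k(w^{\lambda_n}(\cdot))$ (this is exactly Corollary \ref{cor1}), and the final squeeze $\|z^{\lambda_n}\|_{\mathbb{W}}^2\le|\langle k(w^{\lambda_n}(\cdot)),\mathscr{J}[z^{\lambda_n}]\rangle|$ combined with the positivity of $\Upsilon_0^T$ from Remark \ref{rem3.4}. The only cosmetic difference is that you identify the weak limit of $\mathscr{J}[a^{\lambda_n}]$ as zero via weak lower semicontinuity of the convex form $w^*\mapsto\langle w^*,\Upsilon_0^T w^*\rangle$, whereas the paper passes through the convergence $\Upsilon_0^T\mathscr{J}[z^{\lambda_j}]\to\Upsilon_0^T q^*$ and the terminal identity; both are valid and equivalent in substance.
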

\begin{proof}
    From Theorem \ref{thm4.1}, it follows that system \eqref{SEq} has a mild solution, say, $w^{\lambda}\in Q_{r(\lambda)}$, for each $\lambda>0$ and $\zeta_1\in\mathbb{W}$, which is given by 
    \begin{align*}
        w^{\lambda}(t)=\mathscr{G}_{\alpha,\beta}^\nu(t)\zeta+\int_{0}^{t}\mathscr{G}_{\alpha,\beta}^\nu(t-s)\left[\mathrm{B}u^{\lambda}(s)+f(s, w^{\lambda}(s))\right]\mathrm{d}s,
    \end{align*}
    for each $t\in J$, with the control 
    \begin{align*}
	u^{\lambda}(t)=\mathrm{B}^{*}\mathscr{G}_{\alpha,\beta}^\nu(T-t)^*\mathscr{J}\left[\mathrm{R}(\lambda,\Upsilon_{0}^{T})k( w^{\lambda}(\cdot))\right],\ t\in J,
\end{align*}
where
\begin{align*}
	k( w^{\lambda}(\cdot))=\zeta_1-\mathscr{G}_{\alpha,\beta}^\nu(T)\zeta-\int_{0}^{T}\mathscr{G}_{\alpha,\beta}^\nu(t-s)f(s, w^{\lambda}(s))\mathrm{d}s.
\end{align*}
Let us compute 
	\begin{align}\label{4.35}
		w^{\lambda}(T)&=\mathscr{G}_{\alpha,\beta}^\nu(T)\zeta+\int_{0}^{T}\mathscr{G}_{\alpha,\beta}^\nu(T-s)\left[\mathrm{B}u^{\lambda}(s)+f(s,w^{\lambda}(s))\right]\mathrm{d}s\nonumber\\&=\mathscr{G}_{\alpha,\beta}^\nu(T)\zeta+\int_{0}^{T}\mathscr{G}_{\alpha,\beta}^\nu(T-s)f(s,w^{\lambda}(s))\mathrm{d}s\nonumber\\&\quad+\int_{0}^{T}\mathscr{G}_{\alpha,\beta}^\nu(T-s)\mathrm{B}\mathrm{B}^*\mathscr{G}_{\alpha,\beta}^\nu(T-s)^*\mathscr{J}\left[\mathrm{R}(\lambda,\Upsilon_{0}^{T})k(w^\lambda(\cdot))\right]\mathrm{d}s\nonumber\\&=\mathscr{G}_{\alpha,\beta}^\nu(T)\zeta+\int_{0}^{T}\mathscr{G}_{\alpha,\beta}^\nu(T-s)f(s,w^{\lambda}(s))\mathrm{d}s+\Upsilon_{0}^{T}\mathscr{J}\left[\mathrm{R}(\lambda,\Upsilon_{0}^{T})k(w^\lambda(\cdot))\right]\nonumber\\&=\zeta_1-\lambda\mathrm{R}(\lambda,\Upsilon_{0}^{T})k(w^\lambda(\cdot)).
	\end{align}	
    By Assumptions  \textbf{\textit{(F1)}}-\textbf{\textit{(F2)}}, we have
	\begin{align}
		\int_{0}^{T}\left\|f(s,w^{\lambda_j}(s))\right\|_{\mathbb{W}}\mathrm{d}s&\le \int_{0}^{T}\gamma(s)\mathrm{d} s<+\infty,\quad j\in\mathbb{N}. \nonumber
	\end{align}
	This fact guarantees that the sequence $\{f(\cdot, w^{\lambda_{j}}(\cdot))\}_{j=1}^{\infty}$ is uniformly integrable. Consequently, by using the Dunford-Pettis theorem, we extract a subsequence of $ \{f(\cdot, w^{\lambda_{j}}(\cdot))\}_{j=1}^{\infty}$, which we denote again as $ \{f(\cdot, w^{\lambda_{j}}(\cdot))\}_{j=1}^{\infty}$ such that
	\begin{align}\label{wc}
		f(\cdot,w^{\lambda_{j}}(\cdot))\xrightharpoonup{w}f(\cdot) \ \mbox{in}\ \mathrm{L}^1(J;\mathbb{W}) \ \mbox{as}\  \lambda_j\to 0^+ \ (j\to\infty).
	\end{align}
	Next, we evaluate
	\begin{align}\label{e4}
		&\left\|k(w^{\lambda_{j}}(\cdot))-q\right\|_{\mathbb{W}}\nonumber\\&\le\left\|\int_{0}^{T}\mathscr{G}_{\alpha,\beta}^\nu(T-s)\left[f(s,w^{\lambda_j}(s))-f(s)\right]\mathrm{d}s\right\|_{\mathbb{W}}\to 0\ \mbox{as}\ \lambda_j\to0^+ \ (j\to\infty), 
	\end{align}
	where 
	\begin{align*}
		q =\zeta_1-\mathscr{G}_{\alpha,\beta}^\nu(T)\zeta-\int_{0}^{T}\mathscr{G}_{\alpha,\beta}^\nu(T-s)f(s)\mathrm{d}s.
	\end{align*}
	The estimates \eqref{e4} goes to zero using the above weak convergences together with Corollary \ref{cor1}. 

    Let $z^{\lambda_j}=w^{\lambda_j}(T)-\zeta_{1}$ for each $j\in\mathbb{N}$ and 
    $$\|z^{\lambda_j}\|_{\mathbb{W}}\le \|k(w^{\lambda_{j}})\|_{\mathbb{W}}\le \|\zeta_1\|_{\mathbb{W}}+N\|\zeta\|_{\mathbb{W}}+N\|\gamma\|_{\mathrm{L}^1(J;\mathbb{R}^+)}<\infty.$$
    This fact implies that the sequence $\{z^{\lambda_j}\}_{j=1}^{\infty}$ is uniformly bounded in $\mathbb{W}$. Since the space $\mathbb{W}$ is reflexive, by the Banach-Alaoglu theorem, there exists a subsequence, still denoted by $\{z^{\lambda_j}\}_{j=1}^{\infty},$ such that
    $$\mathscr{J}z^{\lambda_j}\xrightharpoonup{w}q^* \ \mbox{in}\ \mathbb{W}^* \  \mbox{as}\ \lambda_j\to 0^+ (j\to \infty).$$
    Consequently, we have $$\Upsilon_{0}^{T}\mathscr{J}w^{\lambda_j}\to\Upsilon_{0}^{T}q^*\ \mbox{in}\ \mathbb{W} \ \  \mbox{as}\ \lambda_j\to 0^+ (j\to \infty).$$
    Using the equality \eqref{4.35} and the above convergence, we obtain that $\Upsilon_{0}^{T}q^*=0$. Further, using Remark \ref{rem3.4}, we get $q^*=0$. Once again by equality \eqref{4.35}, we arrive 
    $$\lambda_j\|z^{\lambda_j}\|^{2}+\langle \Upsilon_{0}^{T}\mathscr{J}z^{\lambda_j}, \mathscr{J}z^{\lambda_j} \rangle=-\lambda_j\langle k(w^{\lambda_j}(\cdot)), \mathscr{J}z^{\lambda_j}\rangle,$$
    which implies that
    $$\|z^{\lambda_j}\|_{\mathbb{W}}\le|\langle k(w^{\lambda_j}(\cdot)), \mathscr{J}z^{\lambda_j} \rangle|\le|\langle k(w^{\lambda_j}(\cdot))-q, \mathscr{J}z^{\lambda_j}\rangle|+|\langle q, \mathscr{J}z^{\lambda_j} \rangle|\to 0, j\to\infty, $$
    and $z^{\lambda_j}\to 0$ as $j\to\infty$. Hence the proof is complete.
    \end{proof}
    \begin{theorem}\label{thm4.2}
	If Assumptions \textbf{\textit{(F1)}} and \textbf{\textit{(F3)}} are satisfied, then for every $\lambda > 0$ and any $\zeta_1 \in \mathbb{W}$, the system \eqref{SEq} with the control \eqref{Cot} admits at least one mild solution. Provided, there exists $\tilde{r}_{\lambda}>0$ such that 
    \begin{align}\label{c2}
        N\|\zeta\|_{\mathbb{W}}+N\|\gamma_{\tilde{r}_{\lambda}}\|_{\mathrm{L}^1(J;\mathbb{R}^+)}+ \frac{(NM)^{2}T}{\lambda}\left[\|\zeta_1\|_{\mathbb{W}}+N\|\zeta\|_{\mathbb{W}}+N\|\gamma_{\tilde{r}_{\lambda}}\|_{\mathrm{L}^1(J;\mathbb{R}^+)}\right]\le \tilde{r}_{\lambda}.
    \end{align}
\end{theorem}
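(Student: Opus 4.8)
The plan is to mirror, essentially verbatim, the fixed-point argument of Theorem \ref{thm4.1}, replacing the global bound \textbf{\textit{(F2)}} by the ball-dependent bound \textbf{\textit{(F3)}} and invoking the hypothesis \eqref{c2} to secure the self-mapping property. For each $\lambda>0$ I would work with the same operator $\mathcal{P}^{\lambda}$ defined in \eqref{op1}, but now seek a fixed point in the closed ball $Q_{\tilde{r}_{\lambda}}=\{w\in C(J;\mathbb{W}):\|w\|_{C(J;\mathbb{W})}\le\tilde{r}_{\lambda}\}$, with $\tilde{r}_{\lambda}$ furnished by \eqref{c2}.

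First I would establish that $\mathcal{P}^{\lambda}(Q_{\tilde{r}_{\lambda}})\subseteq Q_{\tilde{r}_{\lambda}}$. For $w\in Q_{\tilde{r}_{\lambda}}$ one has $\|w(s)\|_{\mathbb{W}}\le\tilde{r}_{\lambda}$ for all $s\in J$, so \textbf{\textit{(F3)}} yields $\|f(s,w(s))\|_{\mathbb{W}}\le\gamma_{\tilde{r}_{\lambda}}(s)$. Repeating the estimates \eqref{e3.10} and \eqref{e3.11} of Step (1) of Theorem \ref{thm4.1} with $\gamma$ replaced by $\gamma_{\tilde{r}_{\lambda}}$, I obtain
\[
\|(\mathcal{P}^{\lambda}w)(t)\|_{\mathbb{W}}\le N\|\zeta\|_{\mathbb{W}}+N\|\gamma_{\tilde{r}_{\lambda}}\|_{\mathrm{L}^1(J;\mathbb{R}^+)}+\frac{(NM)^{2}T}{\lambda}\left[\|\zeta_1\|_{\mathbb{W}}+N\|\zeta\|_{\mathbb{W}}+N\|\gamma_{\tilde{r}_{\lambda}}\|_{\mathrm{L}^1(J;\mathbb{R}^+)}\right],
\]
whose right-hand side is exactly the left-hand side of \eqref{c2}, hence bounded above by $\tilde{r}_{\lambda}$. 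This is the single point where \eqref{c2} enters and is the only new ingredient relative to Theorem \ref{thm4.1}; the condition \eqref{c2} is designed precisely so that this estimate closes.

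The remaining two steps carry over with no essential change. For continuity of $\mathcal{P}^{\lambda}$, any sequence $w^n\to w$ in $Q_{\tilde{r}_{\lambda}}$ remains in the ball, so \textbf{\textit{(F3)}} supplies the dominating function $\gamma_{\tilde{r}_{\lambda}}$ required to apply Lebesgue's dominated convergence theorem; the argument of Step (2) of Theorem \ref{thm4.1}---continuity of $k(\cdot)$, uniform continuity of $\mathrm{R}(\lambda,\Upsilon_{0}^{T})$ (Lemma \ref{lem2.17}), demicontinuity of $\mathscr{J}$, and compactness of $\mathscr{G}_{\alpha,\beta}^\nu(\cdot)^{*}$---then applies unchanged. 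For compactness of $\mathcal{P}^{\lambda}$, the control obeys the uniform bound $\|u^{\lambda}(t)\|_{\mathbb{U}}\le\frac{NM}{\lambda}\left[\|\zeta_1\|_{\mathbb{W}}+N\|\zeta\|_{\mathbb{W}}+N\|\gamma_{\tilde{r}_{\lambda}}\|_{\mathrm{L}^1(J;\mathbb{R}^+)}\right]$ on $Q_{\tilde{r}_{\lambda}}$, and the equicontinuity and relative-compactness argument of Step (3) of Theorem \ref{thm2.9}---using Lemma \ref{lem2.6} together with the compactness of $\mathscr{G}_{\alpha,\beta}^\nu(\eta)$---goes through verbatim with $\gamma_{\tilde{r}_{\lambda}}$ in place of $\gamma$.

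Finally, since $Q_{\tilde{r}_{\lambda}}$ is closed, bounded and convex and $\mathcal{P}^{\lambda}$ is a continuous compact self-map, the \emph{Schauder fixed point theorem} produces a fixed point, which is the desired mild solution. I do not anticipate any genuine obstacle: the entire content of the theorem localizes in the self-mapping estimate above, and everything else is inherited from the proofs of Theorems \ref{thm4.1} and \ref{thm2.9}. Accordingly, I would write the proof as a short reduction---deduce the invariance of $Q_{\tilde{r}_{\lambda}}$ from \eqref{c2} and Step (1) of Theorem \ref{thm4.1}, then point to the unchanged remaining steps---exactly as Theorem \ref{thm2.10} was deduced from Theorem \ref{thm2.9}.
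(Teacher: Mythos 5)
Your proposal matches the paper's proof exactly: the paper likewise deduces the invariance $\mathcal{P}^{\lambda}(Q_{\tilde{r}_{\lambda}})\subseteq Q_{\tilde{r}_{\lambda}}$ from condition \eqref{c2} together with Step (1) of Theorem \ref{thm4.1}, and then declares the remaining steps identical to those of Theorem \ref{thm4.1}. Your identification of the self-mapping estimate as the sole new ingredient, and the substitution of $\gamma_{\tilde{r}_{\lambda}}$ for $\gamma$ throughout, is precisely the intended argument.
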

\begin{proof}
    It directly follows from condition \eqref{c2} and the argument in Step I of Theorem \ref{thm4.1} that the operator $\mathcal{P}^{\lambda}$ maps to the set $Q_{\tilde{r_{\lambda}}}=\{w\in C(J;\mathbb{W}): \left\|x\right\|_{C(J;\mathbb{W})}\le \tilde{r_{\lambda}}\}$ into itself. The rest of the proof proceeds analogously to the steps in the proof of Theorem \ref{thm4.1}.
\end{proof}
\begin{rem}
    Note that if we replace Assumption \textbf{\textit{(F2)}} by \textbf{\textit{(F3)}} in Theorem \ref{thm3.11}, then by the above theorem for each $\lambda>0$, there exists a mild solution, say, $w^{\lambda}(\cdot)$, corresponding to the control given in \eqref{Cot}. Subsequently, we have
    \begin{align*}
        \|w^{\lambda}(t)\|_{\mathbb{W}}\le  N\|\zeta\|_{\mathbb{W}}+N\|\gamma_{r}\|_{\mathrm{L}^1(J;\mathbb{R}^+)}+ \frac{(NM)^{2}T}{\lambda}\left[\|\zeta_1\|_{\mathbb{W}}+N\|\zeta\|_{\mathbb{W}}+N\|\gamma_{r}\|_{\mathrm{L}^1(J;\mathbb{R}^+)}\right]
    \end{align*}
    Due to the $\frac{1}{\lambda}$-term in the right-hand side of the above expression, the sequence $\{f(\cdot, w^{\lambda_{j}}(\cdot))\}_{j=1}^{\infty}$ is fail to be uniformly integrable, consequently, we cannot conclude the weak convergence estimate given in \eqref{wc}.
    \end{rem}
    In the process, we now present another result of the approximate controllability of the system \eqref{SEq} under assumptions \textbf{\textit{(F1)}} and \textbf{\textit{(F3)}} on the nonlinear term $f(\cdot,\cdot)$. We also assume that the following
    \begin{itemize}
    \item [\textbf{\textit{(F)}}] There exists a constant $\tilde{L}\ge 1$ such that 
            $$\|F(t)F(T)^{-1}\|_{\mathcal{L}(\mathcal{R}(F(T)),\mathbb{W})}\le\tilde{L}.$$
            Here, for each $t\in J$, the operator $F(t):\mathbb{W}^*\to\mathbb{W}$ is defined as
            \begin{align}\label{op11}
                F(t)w^*=\int^{t}_{0}\mathscr{G}_{\alpha,\beta}^\nu(t-s)\mathrm{B}\mathrm{B}^{*}\mathscr{G}_{\alpha,\beta}^\nu(T-s)^{*}w^*\mathrm{d}s,
            \end{align}
            and $\mathcal{R}(F(T))$ represents the range of $F(T)$. Note that $F(T)=\Upsilon_{0}^{T}$.
    \end{itemize}
    Note that if the linear control system is approximately controllable in $J$, then by Remark 4.2, \cite{JDE2020}, the operator $F(T)$ is injective. Consequently, the operators $F(T)^{-1}:\mathcal{R}(F(T))\to\mathbb{W}^*$ and $F(t)F(T)^{-1}:\mathcal{R}(F(T))\to\mathbb{W}$ are well defined.
     \begin{theorem}\label{thm4.4}
	 Assume that \textbf{\textit{(F1)}}, \textbf{\textit{(F3)}} and  \textbf{\textit{(F)}} are satisfied and the linear system \eqref{LEq2} is approximately controllable. If there exists $\tilde{r}>0$ such that 
    \begin{align}\label{cc4}
        N\|\zeta\|_{\mathbb{W}}+N\|\gamma_{\tilde{r}}\|_{\mathrm{L}^1(J;\mathbb{R}^+)}+ 2\tilde{L}\left[\|\zeta_1\|_{\mathbb{W}}+N\|\zeta\|_{\mathbb{W}}+N\|\gamma_{\tilde{r}}\|_{\mathrm{L}^1(J;\mathbb{R}^+)}\right]\le \tilde{r},
    \end{align}
    then the system \eqref{SEq} is approximately controllable.
    \end{theorem}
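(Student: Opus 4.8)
The plan is to mirror the proof of Theorem~\ref{thm3.11}, the only genuine difficulty being that under the weaker growth condition \textbf{\textit{(F3)}} the a priori bound on $w^\lambda$ produced in Theorem~\ref{thm4.1} carries the factor $\frac{(NM)^2T}{\lambda}$, which (as noted in the Remark following Theorem~\ref{thm4.2}) destroys the uniform integrability of $\{f(\cdot,w^{\lambda_j}(\cdot))\}$. Assumption~\textbf{\textit{(F)}} is precisely what is needed to replace this $\lambda$-dependent factor by the constant $2\tilde L$, thereby restoring a bound on $w^\lambda$ that is uniform in $\lambda$. First I would establish existence of a mild solution $w^\lambda\in Q_{\tilde r}$ with $\tilde r$ independent of $\lambda$: running the Schauder argument of Theorem~\ref{thm4.1} verbatim (continuity and compactness of $\mathcal P^\lambda$ are unchanged), the only point that must be revisited is the self-map estimate of Step~(1), where I would use \eqref{cc4} in place of \eqref{c2}.

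The heart of the argument is to re-estimate the control contribution. The key observation is the identity
\begin{align*}
\int_0^t \mathscr{G}_{\alpha,\beta}^\nu(t-s)\mathrm{B}u^\lambda(s)\,\mathrm{d}s
= F(t)\mathscr{J}\!\left[\mathrm{R}(\lambda,\Upsilon_0^T)k(w(\cdot))\right],
\end{align*}
which is immediate from \eqref{Cot} and the definition \eqref{op11} of $F(t)$. Writing $\phi:=\mathscr{J}[\mathrm{R}(\lambda,\Upsilon_0^T)k(w(\cdot))]$ and factoring $F(t)=\bigl(F(t)F(T)^{-1}\bigr)F(T)$ together with $F(T)=\Upsilon_0^T$, Assumption~\textbf{\textit{(F)}} yields $\|F(t)\phi\|_{\mathbb{W}}\le\tilde L\,\|\Upsilon_0^T\phi\|_{\mathbb{W}}$; this is legitimate because approximate controllability of \eqref{LEq2} makes $F(T)=\Upsilon_0^T$ injective, so that $F(T)^{-1}$ is well defined on $\mathcal R(F(T))$ and $F(T)^{-1}F(T)\phi=\phi$, while $\Upsilon_0^T\phi=F(T)\phi\in\mathcal R(F(T))$ trivially. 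The resolvent identity for $\mathrm{R}(\lambda,\Upsilon_0^T)=(\lambda\mathrm{I}+\Upsilon_0^T\mathscr{J})^{-1}$ then gives $\Upsilon_0^T\phi=k(w(\cdot))-\lambda\mathrm{R}(\lambda,\Upsilon_0^T)k(w(\cdot))$, whence by Lemma~\ref{lem2.16},
\begin{align*}
\left\|\int_0^t \mathscr{G}_{\alpha,\beta}^\nu(t-s)\mathrm{B}u^\lambda(s)\,\mathrm{d}s\right\|_{\mathbb{W}}
&\le 2\tilde L\,\|k(w(\cdot))\|_{\mathbb{W}}\\
&\le 2\tilde L\bigl[\|\zeta_1\|_{\mathbb{W}}+N\|\zeta\|_{\mathbb{W}}+N\|\gamma_{\tilde r}\|_{\mathrm{L}^1(J;\mathbb{R}^+)}\bigr],
\end{align*}
using \textbf{\textit{(F3)}} for $w\in Q_{\tilde r}$. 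Together with \eqref{cc4} this closes Step~(1) with $\tilde r$ independent of $\lambda$.

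With the family $\{w^\lambda\}$ now bounded in $C(J;\mathbb{W})$ uniformly in $\lambda$, \textbf{\textit{(F3)}} gives $\|f(s,w^\lambda(s))\|_{\mathbb{W}}\le\gamma_{\tilde r}(s)$ for a.e.\ $s$ and all $\lambda$, so $\{f(\cdot,w^{\lambda_j}(\cdot))\}$ is uniformly integrable and the weak convergence \eqref{wc} is recovered via the Dunford--Pettis theorem. From here the argument of Theorem~\ref{thm3.11} applies unchanged: the representation $w^\lambda(T)=\zeta_1-\lambda\mathrm{R}(\lambda,\Upsilon_0^T)k(w^\lambda(\cdot))$ from \eqref{4.35}, the strong convergence $k(w^{\lambda_j}(\cdot))\to q$ obtained through Corollary~\ref{cor1}, and the positivity of $\Upsilon_0^T$ from Remark~\ref{rem3.4} combine to force $z^{\lambda_j}=w^{\lambda_j}(T)-\zeta_1\to0$, which is the asserted approximate controllability. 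I expect the main obstacle to be the second paragraph, namely justifying the factorization $F(t)=(F(t)F(T)^{-1})F(T)$ and confirming that the relevant argument lies in $\mathcal R(F(T))$ so that Assumption~\textbf{\textit{(F)}} can legitimately be invoked; once the $\lambda$-uniform control bound is in hand, the remainder is essentially a transcription of Theorem~\ref{thm3.11}.
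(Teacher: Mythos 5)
Your proposal is correct and follows essentially the same route as the paper's proof: the self-map estimate is obtained by rewriting the control term as $F(t)\mathscr{J}[\mathrm{R}(\lambda,\Upsilon_0^T)k(w(\cdot))]$, factoring through $F(t)F(T)^{-1}$ and using Assumption \textbf{\textit{(F)}} together with the identity $\Upsilon_0^T\mathscr{J}\mathrm{R}(\lambda,\Upsilon_0^T)k=(\lambda\mathrm{I}+\Upsilon_0^T\mathscr{J}-\lambda\mathrm{I})\mathrm{R}(\lambda,\Upsilon_0^T)k$ and Lemma \ref{lem2.16} to get the $\lambda$-independent bound $2\tilde L\|k(w(\cdot))\|_{\mathbb{W}}$, after which uniform integrability of $\{f(\cdot,w^{\lambda_j}(\cdot))\}$ is restored and the argument of Theorem \ref{thm3.11} applies verbatim. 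This matches the paper's proof step for step.
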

    \begin{proof}
         For each $\lambda>0$, we keep the operator $\mathcal{P}^{\lambda}:C(J;\mathbb{W})\to C(J;\mathbb{W})$ is same as in the proof of Theorem \ref{thm4.1}, that is,
		\begin{align*}
        (\mathcal{P}^{\lambda}w)(t)=\mathscr{G}_{\alpha,\beta}^\nu(t)\zeta+\int_{0}^{t}\mathscr{G}_{\alpha,\beta}^\nu(t-s)\left[\mathrm{B}u^{\lambda}(s)+f(s,w(s))\right]\mathrm{d}s, t\in J.
		\end{align*}
We now prove that, for all $\lambda>0$, there is an $\tilde{r}$ such that $\mathcal{P}^{\lambda}(Q_{\tilde{r}})\subseteq Q_{\tilde{r}}$, where the set $Q_{\tilde{r}}:=\{w\in C(J;\mathbb{W}): \left\|x\right\|_{C(J;\mathbb{W})}\le \tilde{r}\}$. Let us estimate
\begin{align}\label{ee3.10}
	\left\|(\mathcal{P}^{\lambda}w)(t)\right\|_\mathbb{W}&\le N\|\zeta\|_{\mathbb{W}}+N\int_{0}^{t}\|f(s,w(s)\|_{\mathbb{W}}\mathrm{d}s+\left\|\int_{0}^{t}\mathscr{G}_{\alpha,\beta}^\nu(t-s)\mathrm{B}u^{\lambda}(s)\mathrm{d}s\right\|_{\mathbb{W}}\nonumber\\&\le N\|\zeta\|_{\mathbb{W}}+N\|\gamma_{\tilde{r}}\|_{\mathrm{L}^1(J;\mathbb{R}^+)}+\left\|\int_{0}^{t}\mathscr{G}_{\alpha,\beta}^\nu(t-s)\mathrm{B}u^{\lambda}(s)\mathrm{d}s\right\|_{\mathbb{W}}.
\end{align}
Using Assumption \textbf{\textit{(F)}} along with the estimate \eqref{ee1}, we obtain 
\begin{align}\label{ee3.11}
	\left\|\int_{0}^{t}\mathscr{G}_{\alpha,\beta}^\nu(t-s)\mathrm{B}u^{\lambda}(s)\mathrm{d}s\right\|_{\mathbb{W}}&=\left\|\int_{0}^{t}\mathscr{G}_{\alpha,\beta}^\nu(t-s)\mathrm{B}\mathrm{B}^{*}\mathscr{G}_{\alpha,\beta}^\nu(T-t)^*\mathscr{J}\left[\mathrm{R}(\lambda,\Upsilon_{0}^{T})k(w(\cdot))\right]\right\|_{\mathbb{W}}\nonumber\\&=\left\|F(t)\mathscr{J}\left[\mathrm{R}(\lambda,\Upsilon_{0}^{T})k(w(\cdot))\right]\right\|_{\mathbb{W}}\nonumber\\&=\left\|F(t)F(T)^{-1}\Upsilon_{0}^{T}\mathscr{J}\left[\mathrm{R}(\lambda,\Upsilon_{0}^{T})k(w(\cdot))\right]\right\|_{\mathbb{W}}\nonumber\\&\le\tilde{L}\left\|\Upsilon_{0}^{T}\mathscr{J}\left[\mathrm{R}(\lambda,\Upsilon_{0}^{T})k(w(\cdot))\right]\right\|_{\mathbb{W}}\nonumber\\&=\tilde{L}\left\|(\lambda\mathrm{I}+\Upsilon_{0}^{T}\mathscr{J}-\lambda\mathrm{I})\mathrm{R}(\lambda,\Upsilon_{0}^{T})k(w(\cdot))\right\|_{\mathbb{W}}\nonumber\\&\le 2\tilde{L}\left\|k(w(\cdot))\right\|_{\mathbb{W}} \nonumber\\&\le 2\tilde{L}\left[\|\zeta_1\|_{\mathbb{W}}+N\|\zeta\|_{\mathbb{W}}+N\|\gamma_{\tilde{r}}\|_{\mathrm{L}^1(J;\mathbb{R}^+)}\right].
    \end{align}
 Combining the estimates \eqref{ee3.10} and \eqref{ee3.11}, we have
\begin{align}\label{ee3}
	\left\|(\mathcal{P}^{\lambda}w)(t)\right\|_\mathbb{W}&\le N\|\zeta\|_{\mathbb{W}}+N\|\gamma_{\tilde{r}}\|_{\mathrm{L}^1(J;\mathbb{R}^+)}+ 2\tilde{L}\left[\|\zeta_1\|_{\mathbb{W}}+N\|\zeta\|_{\mathbb{W}}+N\|\gamma_{\tilde{r}}\|_{\mathrm{L}^1(J;\mathbb{R}^+)}\right].
\end{align}
By the above inequality and the estimate \eqref{cc4}, we deduce that, for all $\lambda>0$, there exists $\tilde{r}$ (independent from $\lambda$) such that $\mathcal{P}^{\lambda}(Q_{\tilde{r}})\subseteq Q_{\tilde{r}}$. 

The remaining proof of the existence of a mild solution for the system \eqref{SEq} proceeds analogously to the steps in the proof of Theorem \ref{thm4.1}. Moreover, Assumptions \textbf{\textit{(F1)}} and \textbf{\textit{(F3)}}, along with the fact \eqref{ee3}, ensure that the sequence $\{f(\cdot, w^{\lambda_{j}}(\cdot))\}_{j=1}^{\infty}$ is uniformly integrable. Consequently, by following a similar approach to the proof of Theorem \ref{thm3.11}, we can show that the system \eqref{SEq} is approximately controllable
    \end{proof}
    We now discuss the approximate controllability of the system \eqref{SEq} within the framework of a general Banach space $\mathbb{W}$.
\begin{theorem}\label{thm3.15}
		If Assumptions \textbf{\textit{(F1)}}-\textbf{\textit{(F2)}} (or \textbf{\textit{(F3)}}) are satisfied and the linear system \eqref{LEq2} is approximately controllable on $[0,t]$ for all $0<t\le T$, then the system \eqref{SEq} is approximately controllable.
	\end{theorem}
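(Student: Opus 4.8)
The plan is to establish that the attainable set $\{w(T): w \text{ is a mild solution of } \eqref{SEq} \text{ for some } u\in\mathrm{L}^2(J;\mathbb{U})\}$ is dense in $\mathbb{W}$. Fix $\zeta_1\in\mathbb{W}$ and $\epsilon>0$; by the definition of approximate controllability it suffices to produce one control $u$ whose mild solution satisfies $\|w(T)-\zeta_1\|_{\mathbb{W}}<\epsilon$. The guiding idea is to replace the continuous control selection used in the super-reflexive setting (the resolvent $\mathrm{R}(\lambda,\Upsilon_0^{T})$ of Theorems \ref{thm3.11} and \ref{thm4.4}, which is unavailable in a general Banach space) by a control supported on a short \emph{terminal} interval $[T-\delta,T]$, on which the nonlinearity contributes a uniformly negligible amount.

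First I would fix a mild solution $w_0$ of the uncontrolled problem (control $\equiv 0$) on $[0,T-\delta]$, which exists by Theorem \ref{thm2.9}, and freeze the history on $[0,T-\delta]$. For a control $u$ supported on $[T-\delta,T]$, splitting the integral in \eqref{Seq1} at $T-\delta$ gives the decomposition
\[
w(T)=\Phi(T)+\int_{T-\delta}^{T}\mathscr{G}_{\alpha,\beta}^\nu(T-s)\mathrm{B}u(s)\,\mathrm{d}s+\int_{T-\delta}^{T}\mathscr{G}_{\alpha,\beta}^\nu(T-s)f(s,w(s))\,\mathrm{d}s,
\]
where $\Phi(T)=\mathscr{G}_{\alpha,\beta}^\nu(T)\zeta+\int_{0}^{T-\delta}\mathscr{G}_{\alpha,\beta}^\nu(T-s)f(s,w_0(s))\,\mathrm{d}s$ is a \emph{fixed} vector, independent of $u$. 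By Assumption \textbf{\textit{(F2)}} the last term is bounded by $N\int_{T-\delta}^{T}\gamma(s)\,\mathrm{d}s$ uniformly in $u$, so, since $\gamma\in\mathrm{L}^1(J)$, I choose $\delta>0$ small enough that this bound is $<\epsilon/2$.

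The decisive point is that the middle term ranges over a dense subset of $\mathbb{W}$. Indeed, the change of variables $\tau=s-(T-\delta)$ identifies $\{\int_{T-\delta}^{T}\mathscr{G}_{\alpha,\beta}^\nu(T-s)\mathrm{B}u(s)\,\mathrm{d}s:u\}$ with the reachable set at time $\delta$ of the linear system \eqref{LEq2} from the origin; since \eqref{LEq2} is approximately controllable on $[0,\delta]$ by hypothesis, this set is dense in $\mathbb{W}$ (equivalently, by the characterization of Lemma \ref{lem3.4} applied on $[0,\delta]$). Hence I can select $u$ with $\big\|\Phi(T)+\int_{T-\delta}^{T}\mathscr{G}_{\alpha,\beta}^\nu(T-s)\mathrm{B}u(s)\,\mathrm{d}s-\zeta_1\big\|_{\mathbb{W}}<\epsilon/2$, and combining with the uniform tail bound yields $\|w(T)-\zeta_1\|_{\mathbb{W}}<\epsilon$. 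To make this rigorous, for the chosen $u$ one solves the Volterra equation $w(t)=\Phi(t)+\int_{T-\delta}^{t}\mathscr{G}_{\alpha,\beta}^\nu(t-s)[\mathrm{B}u(s)+f(s,w(s))]\,\mathrm{d}s$ on $[T-\delta,T]$ by a localized Schauder argument as in Theorem \ref{thm2.9} (using the compactness of $\mathscr{G}_{\alpha,\beta}^\nu$, Lemma \ref{lem2.6}), and glues this to $w_0$ to obtain a genuine mild solution of \eqref{SEq} on $J$.

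The main obstacle is precisely the decoupling of the control from the nonlinear feedback, which in a general Banach space cannot be routed through the duality-mapping/resolvent machinery of the super-reflexive case. Localizing the control to $[T-\delta,T]$ circumvents this: because the terminal nonlinear contribution is uniformly small, the control is only required to approximate the \emph{fixed} vector $\zeta_1-\Phi(T)$, so the semilinear problem reduces to linear approximate controllability on a short interval — which is exactly why the hypothesis is imposed on every $[0,t]$, $0<t\le T$, and not merely on $[0,T]$. A secondary technical point is the well-posedness with prescribed history and, under Assumption \textbf{\textit{(F3)}} in place of \textbf{\textit{(F2)}}, the a priori bound on $\|w\|_{\mathbb{W}}$ needed to invoke $\gamma_r$ in place of $\gamma$ in the tail estimate; this case is more delicate and is handled through the a priori bound furnished by Theorem \ref{thm2.10} (cf. condition \eqref{c1}) before shrinking $\delta$.
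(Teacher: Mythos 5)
The paper does not actually write out a proof of Theorem \ref{thm3.15}: it only remarks that the result follows by adapting the technique of \cite[Theorem 4.18]{EECT2025}, and your terminal-interval localization is exactly that technique, so under \textbf{\textit{(F1)}}--\textbf{\textit{(F2)}} your argument is a correct and complete substitute for the omitted proof. The three ingredients all check out: the tail $\int_{T-\delta}^{T}\mathscr{G}_{\alpha,\beta}^\nu(T-s)f(s,w(s))\,\mathrm{d}s$ is bounded by $N\int_{T-\delta}^{T}\gamma(s)\,\mathrm{d}s$ uniformly in the control, hence can be made smaller than $\epsilon/2$ by absolute continuity of the Lebesgue integral; the set $\big\{\int_{T-\delta}^{T}\mathscr{G}_{\alpha,\beta}^\nu(T-s)\mathrm{B}u(s)\,\mathrm{d}s : u\big\}$ is, after the shift $\tau=s-(T-\delta)$, the reachable set from the origin at time $\delta$ of the linear system, which is dense precisely because that system is approximately controllable on $[0,\delta]$ (take $\zeta=0$ in the definition, or apply Lemma \ref{lem3.4} on $[0,\delta]$); and gluing $w_0$ to the solution of the Volterra equation on $[T-\delta,T]$ does yield a genuine mild solution of \eqref{SEq} in the sense of \eqref{Seq1}. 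The gluing step is in fact the right way to organize this, since mild solutions are not unique under Carath\'eodory hypotheses and one cannot simply apply Theorem \ref{thm2.9} on all of $J$ and expect the restriction to $[0,T-\delta]$ to coincide with the prechosen $w_0$.

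The one place where your write-up is genuinely incomplete is the \textbf{\textit{(F3)}} alternative, and you are right to flag it as delicate: there the tail estimate uses $\gamma_r$ with $r$ an a priori bound on $\|w\|_{C([T-\delta,T];\mathbb{W})}$, but $r$ depends on the chosen control $u$ through a condition of the type \eqref{c1}, $u$ depends on $\delta$ (it must approximate $\zeta_1-\Phi(T)$, and $\Phi(T)$ itself moves with $\delta$), while $\delta$ must be chosen after $r$ so that $N\int_{T-\delta}^{T}\gamma_r(s)\,\mathrm{d}s<\epsilon/2$. Breaking this circle requires an additional quantitative hypothesis that neither the statement of Theorem \ref{thm3.15} nor the paper's (absent) proof records; you should either impose an explicit analogue of \eqref{c1} valid uniformly as $\delta$ decreases, or restrict the claim accordingly. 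For the \textbf{\textit{(F1)}}--\textbf{\textit{(F2)}} case nothing is missing.
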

    The theorem can be proved by adapting the technique used in \cite[Theorem 4.18]{EECT2025}.

    \section{Application}\label{sec5}\setcounter{equation}{0}
	In this section, we investigate the approximate controllability of a heat equation with memory that demonstrates the applicability of our developed results. The analysis is carried out within the functional framework of the state space $\mathrm{L}^p([0,\pi];\mathbb{R})$ for $p\in[2,\infty)$, and the control space $\mathrm{L}^2([0,\pi];\mathbb{R}),$ as discussed in \cite{MTM-20}.
	\begin{Ex}\label{ex1} Consider the following control system:
		\begin{equation}\label{57}
			\left\{
			\begin{aligned}
				\frac{\partial}{\partial t}w(t,\xi)&=\frac{\partial^2w(t,\xi)}{\partial \xi^2}+ \frac{\alpha}{\Gamma(\nu)}\int_{0}^{t}e^{-\beta(t-s)}(t-s)^{\nu-1}\frac{\partial^2w(s,\xi)}{\partial \xi^2}\mathrm{d}s+\eta(t,\xi)\\&\quad+h(t,w(t,\xi)), \ t\in (0,T], \xi\in[0,\pi], \\
				&w(t,0)=w(t,\pi)=0, \  t\in J=[0,T],\\
				&w(0,\xi)=w_{0}(\xi),\ \xi\in[0,\pi],
			\end{aligned}
			\right.
		\end{equation}
		where $\alpha>0, \beta\ge 0, 0<\nu<1$ and $w_0:[0,\pi]\to\mathbb{R}$ is an appropriate function. The function $\eta: J\times[0,\pi]\to\mathbb{R}$ is square integrable in $t$ and $\xi$. 
	\end{Ex}
    \noindent\textbf{Case 1:} Take the state space $\mathbb{W}_p=\mathrm{L}^p([0,\pi];\mathbb{R})$ for $p\in[2,\infty)$ and the control space $\mathbb{U}=\mathrm{L}^2([0,\pi];\mathbb{R})$. Note that the space $\mathbb{W}_p$ for $p\in[2,\infty)$ is super-reflexive and its dual space $\mathbb{W}_p^*=\mathrm{L}^{\frac{p}{p-1}}([0,\pi];\mathbb{R})$ is uniform convex.
	\vskip 0.1 cm
	\noindent\textbf{Step 1:} \emph{Resolvent operator.} We now define the operator
	$\mathrm{A}_{p}:D(\mathrm{A}_p)\subset \mathbb{W}_p\to\mathbb{W}_p$ as
	$$\mathrm{A}_pg=g'',\ \ D(\mathrm{A}_p)=\mathrm{W}^{2,p}([0,\pi];\mathbb{R})\cap\mathrm{W}^{1,p}_0([0,\pi];\mathbb{R}).$$
	The spectrum of the operator is given by $\{-m^2: m\in\mathbb{N}\}$. Thus, the operator $\mathrm{A}_p$ can be written as 
	\begin{align}\label{OPA}
		\mathrm{A}_pg=\sum_{m=1}^{\infty}-m^2\langle g, \nu_m \rangle \nu_m,\ g\in D(\mathrm{A}_p),
	\end{align}
	where $ \nu_m(\xi)=\sqrt{\frac{2}{\pi}}\sin(m\xi)$ and $\langle g, v_m \rangle:=\int_{0}^{\pi}g(\xi)v_m(\xi)\mathrm{d}\xi$. The domain $D(\mathrm{A}_p)$ is dense in $\mathbb{W}_p$ because $\mathrm{C}_0^{\infty}([0,\pi];\mathbb{R})\subset D(\mathrm{A}_p)$. Next, for all $\lambda\neq -m^2, m\in\mathbb{N}$, let us consider the following Sturm-Liouville equation :
\begin{equation*}
\left\{
\begin{aligned}
\left(\lambda\mathrm{I}-\mathrm{A}_p\right)g(\xi)&=h(\xi), \ 0<\xi<\pi,\\
g(0)=g(\pi)&=0.
\end{aligned}
\right.
\end{equation*}
The above equation can be written as
\begin{align}\label{511}
\left(\lambda\mathrm{I}-\Delta\right)g(\xi)&=h(\xi),
\end{align}
where $\Delta g(\xi)=g''(\xi)$. Multiplying $g|g|^{p-2}$ both sides in equation \eqref{511} and then integrating over the interval $[0,\pi]$, we obtain
\begin{align*}
\lambda\int_0^{\pi}|g(\xi)|^p\mathrm{d}\xi&+(p-1)\int_0^{\pi}|g(\xi)|^{p-2}|g'(\xi)|^2\mathrm{d}\xi=\int_0^{\pi}h(\xi)g(\xi)|g(\xi)|^{p-2}\mathrm{d}\xi.
\end{align*}
Applying H\"older's inequality, we obtain
\begin{align*}
\lambda\int_0^{\pi}|g(\xi)|^p\mathrm{d}\xi&\leq \left(\int_0^{\pi}|g(\xi)|^p\mathrm{d}\xi\right)^{\frac{p-1}{p}}\left(\int_0^{\pi}|h(\xi)|^p\mathrm{d}\xi\right)^{\frac{1}{p}}.
\end{align*}
Hence, we have
\begin{align*}
\|\mathrm{R}(\lambda,\mathrm{A}_p)h\|_{\mathrm{L}^p}=\|g\|_{\mathrm{L}^p}\leq\frac{1}{\lambda}\|h\|_{\mathrm{L}^p},
\end{align*}
so that we get 
\begin{align*}
\|\mathrm{R}(\lambda,\mathrm{A}_p)\|_{\mathcal{L}(\mathrm{L}^p)}\leq\frac{1}{\lambda}, \ \mbox{for all}\ \lambda>0.
\end{align*}
For complex values of $\lambda$, similar estimates can be established by multiplying \eqref{511} by $\overline{g}|g|^{p-2}$, performing integration by parts over $(0,\pi)$,  separating the real and imaginary components, and subsequently combining them. Thus, the operator $\mathrm{A}_p$ is the infinitesimal generator of strongly continuously semigroup. Also the operator $\mathrm{A}_p$ is sectorial of angle $\theta$ for every $0<\theta<\frac{\pi}{2}$ (see \cite{RP2021}). Therefore, the abstract form of the linear system corresponding to \eqref{57} has a resolvent family $\mathscr{G}_{\alpha,\beta,p}^\nu(\cdot)$ on $\mathbb{W}_p$. Moreover, the resolvent operator $\mathrm{R}(\lambda,\mathrm{A}_p)$ is compact for some $\lambda\in\rho(\mathrm{A}_p)$ (see application section, \cite{MTM-20}). Hence, the resolvent family $\mathscr{G}_{\alpha,\beta,p}^\nu(\cdot)$ is compact for all $t>0$.

Since $\mathrm{A}_p$ is a  sectorial operator, by \cite[Corollary 5.4]{RP2021}, the resolvent family $\mathscr{G}_{\alpha,\beta,p}^\nu(t)$ can be given explicitly by
\begin{equation*}
    \mathscr{G}_{\alpha,\beta,p}^\nu(t)=e^{-\beta t}\sum_{m=1}^\infty\sum_{k=0}^\infty(\beta-\lambda_m)^kt^k E_{\nu+1,k+1}^{k+1}(-\alpha\lambda_m t^{\nu+1}),\;\;t\ge 0,
\end{equation*}
with $\lambda_m=m^2,$ where for any $q,r,\gamma>0$
\begin{align*}
E_{q,r}^{\gamma}(z):=\sum_{j=0}^\infty \frac{\Gamma(\gamma+j)z^j}{j!\Gamma(\gamma)\Gamma(qj+r)}, \;\;\;z\in\mathbb{C},
\end{align*}
denotes the generalized Mittag-Leffler function. Since
\begin{equation*}
    \frac{d^n }{dz^n}[z^{r-1}E_{q,r}^{\gamma}(az^q)]=z^{r-n-1}E_{q,r-n}^{\gamma}(az^q),
\end{equation*}
(see for instance \cite[Theorem 11.1]{Ha-Ma-Sa-11}). By the general Leibniz rule for the $n$-derivative, we compute
\begin{align*}
    \frac{d^n \mathscr{G}_{\alpha,\beta,p}^\nu(t)}{dt^n}=e^{-\beta t}\sum_{m=1}^\infty \sum_{j=0}^n \sum_{k=j}^\infty {n\choose j}(-\beta)^{n-j} (\beta-\lambda_m)^kt^{k-j} E_{\nu+1,k+1-j}^{k+1}(-\alpha\lambda_m t^{\nu+1}),\;\;t\ge 0.
\end{align*}
Then, for any $g\in D(\mathrm{A}_p)$, we get 
\begin{align*}
    \frac{d^n \mathscr{G}_{\alpha,\beta,p}^\nu(t)}{dt^n}g=e^{-\beta t}\sum_{m=1}^\infty \sum_{j=0}^n \sum_{k=j}^\infty {n\choose j}(-\beta)^{n-j} (\beta-\lambda_m)^kt^{k-j} E_{\nu+1,k+1-j}^{k+1}(-\alpha\lambda_m t^{\nu+1})\langle g, \nu_m \rangle \nu_m,
\end{align*}
for all $t\ge 0.$ This implies that
\begin{align*}
    \frac{d^n \mathscr{G}_{\alpha,\beta,p}^\nu(0)}{dt^n}g=\sum_{m=1}^\infty \sum_{j=0}^n  {n\choose j}(-\beta)^{n-j} (\beta-\lambda_m)^j\langle g, \nu_m \rangle \nu_m=\sum_{m=1}^\infty (-\lambda_m)^n\langle g, \nu_m \rangle \nu_m=\mathrm{A}_p^n g,
\end{align*}
for all $g\in D(\mathrm{A}_p).$

We now estimate $\|\mathscr{G}_{\alpha,\beta,p}^\nu(t)\|$ for any $t\geq 0.$  Since $\{\nu_m:m\in\mathbb{N}\}$ is an orthonormal basis, from definition of $\mathscr{G}_{\alpha,\beta,p}^\nu(t)$ it follows that
\begin{align*}
\|\mathscr{G}_{\alpha,\beta,p}^\nu(t)g\|^2= e^{-\beta t}\sum_{m=1}^\infty\sum_{k=0}^\infty\frac{(\beta-m^2)^kt^k}{\Gamma(k+1)}\sum_{j=0}^\infty \frac{(-\alpha t^{\nu+1} m^2)^j}{\Gamma(j+1)}\frac{\Gamma(k+j+1)}{\Gamma(j\nu+k+j+1)}|\langle g, \nu_m \rangle|^2,
\end{align*}
for $g\in \mathbb{W}_p.$
From \cite[Inequality (4.4)]{Bu-Is-86}, it follows that 
\begin{equation*}
    \frac{\Gamma(k+j+1)}{\Gamma(j\nu+k+j+1)}<\frac{1}{\Gamma(\nu j +1)},
\end{equation*}
and by \cite[Corollary 2]{Qi-02}, we obtain that
\begin{equation*}
\frac{1}{\Gamma(\nu j +1)}<\frac{1}{e^{\gamma\mu j}}<1,
\end{equation*}
where $\gamma=0.5772...$ is the Euler's constant. This implies that
\begin{align*}
\|\mathscr{G}_{\alpha,\beta,p}^\nu(t)g\|^2&\leq e^{-\beta t}\sum_{m=1}^\infty\sum_{k=0}^\infty\frac{(\beta-m^2)^kt^k}{\Gamma(k+1)}\sum_{j=0}^\infty \frac{(-\alpha t^{\nu+1} m^2)^j}{\Gamma(j+1)}|\langle g, \nu_m \rangle|^2\\
&=\sum_{m=1}^\infty e^{-m^2t(1+\alpha t^\nu)}|\langle g, \nu_m \rangle|^2.
\end{align*}
Since $\alpha>0,$ the Bessel inequality implies that $\|\mathscr{G}_{\alpha,\beta,p}^\nu(t)g\|\leq \|g\|.$ Therefore, $\|\mathscr{G}_{\alpha,\beta,p}^\nu(t)\|=\sup_{\|g\|\leq 1}\|\mathscr{G}_{\alpha,\beta,p}^\nu(t)g\|\leq 1$ for any $t\geq 0.$ 
\vskip 0.1 cm
\noindent\textbf{Step 2:} \emph{Functional setting and approximate controllability.} 
	Let us take $w(t)(\xi):=w(t,\xi)$ for $t\in J $ and $ \xi\in[0,\pi]$ and the operator $\mathrm{B}:\mathbb{U}\to\mathbb{W}_p$ is defined as  $$\mathrm{B}u(t)(\xi):=\eta(t,\xi)=\int_{0}^{\pi}G(\zeta,\xi)u(t)(\zeta)\mathrm{d}\zeta, \ t\in J,\ \xi\in [0,\pi],$$ with kernel $G\in\mathrm{C}([0,\pi]\times[0,\pi];\mathbb{R})$ and $G(\zeta,\xi)=G(\xi,\zeta),$ for all $\zeta,\xi\in [0,\pi]$.  
 We can choose the kernel $G(\cdot,\cdot)$ such that the operator $\mathrm{B}$ is one-one. In particular, 
 \begin{equation*}
		G(\zeta,\xi)=
		\begin{dcases}
			\xi(\pi-\zeta), \mbox{if}\ 0\le\zeta\le\xi\le\pi,\\
			(\pi-\xi)\zeta, \mbox{if}\ 0\le\xi\le\zeta\le\pi.
		\end{dcases}
	\end{equation*}
	We now prove that the operator $\mathrm{B}$ is bounded. For this, let us estimate 
	\begin{align*}
	\left\|\mathrm{B}u(t)\right\|_{\mathbb{W}_p}^p=\int_{0}^{\pi}\left|\int_{0}^{\pi}G(\zeta,\xi)u(t)(\zeta)\mathrm{d}\zeta\right|^p\mathrm{d}\xi.
	\end{align*}
	Using the Cauchy-Schwarz inequality, we obtain
\begin{align*}\left\|\mathrm{B}u(t)\right\|_{\mathbb{X}_p}^p&\le\int_{0}^{\pi}\left[\left(\int_{0}^{\pi}|G(\zeta,\xi)|^2\mathrm{d}\zeta\right)^{\frac{1}{2}}\left(\int_{0}^{\pi}|u(t)(\zeta)|^2\mathrm{d}\zeta\right)^{\frac{1}{2}}\right]^{p}\mathrm{d}\xi\\&=\left(\int_{0}^{\pi}|u(t)(\zeta)|^2\mathrm{d}\zeta\right)^{\frac{p}{2}}\int_{0}^{\pi}\left(\int_{0}^{\pi}|G(\zeta,\xi)|^2\mathrm{d}\zeta\right)^{\frac{p}{2}}\mathrm{d}\xi.
	\end{align*}
	Since the kernel $G(\cdot,\cdot)$ is continuous, we have
	\begin{align*}
	\left\|\mathrm{B}u(t)\right\|_{\mathbb{X}_p}\le C\left\|u(t)\right\|_{\mathbb{U}},
	\end{align*}
	so that we get 
	$\left\|\mathrm{B}\right\|_{\mathcal{L}(\mathbb{U};\mathbb{X}_p)}\le C.$ Thus, the operator $\mathrm{B}$ is bounded.

We choose
$$h(t,w(t,\xi))=k_{0}\cos\left(\frac{2\pi t}{T}\right)\sin(w(t,\xi)),t\in J, \xi\in[0,\pi],$$
where $k_{0}$ is some positive constant.
Next, we define $f:J\times \mathbb{W}_p\rightarrow \mathbb{W}_p$  as  
\begin{align*}
f(t,w(t))(\xi):= h(t,w(t,\xi))=k_{0}\cos\left(\frac{2\pi t}{T}\right)\sin(w(t)(\xi)), \ \xi\in[0,\pi].
\end{align*}
 Clearly, $f$ is continuous  and
\begin{align*}
\left\|f(t,w(t))\right\|_{\mathrm{L}^p}=k_0\left(\int_{0}^{\pi}\left|\cos\left(\frac{2\pi t}{T}\right)\sin(w(t)(\xi))\right|^p\mathrm{d}\xi\right)^{1/p}\le k_0 {\pi}^{\frac{1}{p}}\left|\cos\left(\frac{2\pi t}{T}\right)\right|.
\end{align*}
It is clear from the above facts that the function satisfy the conditions \textbf{\textit{(F1)}}-\textbf{\textit{(F2)}}.

Using the above expressions, one can reformulate the system \eqref{57} in an abstract form as given in \eqref{SEq} that satisfies all the assumptions. It remains to verify the approximate controllability of the corresponding linear system associated with \eqref{SEq}. To achieve this, we consider $\mathrm{B}^*\mathscr{G}_{\alpha,\beta,p}^\nu(T-t)^*w^{*}=0,$ for any $w^{*}\in\mathbb{W}^{*}_p$. Then, we have
	\begin{align*}
		\mathrm{B}^*\mathscr{G}_{\alpha,\beta,p}^\nu(T-t)^*w^{*}=0\Rightarrow \mathscr{G}_{\alpha,\beta,p}^\nu(T-t)^*w^{*}=0\Rightarrow w^{*}=0.
	\end{align*}
	Therefore, the approximate controllability of the linear system corresponding to  \eqref{SEq} follows directly from Lemma \ref{lem3.4}. Finally, by applying Theorem \ref{thm3.11}, we conclude that the semilinear system \eqref{SEq}, which is equivalent to the system \eqref{57} is approximately controllable.
    \vskip 0.1 cm
    \noindent\textbf{Case 2:} In this case we choose $\mathbb{W}=\mathbb{U}=\mathrm{L}^2([0,\pi];\mathbb{R})$ and the operator $\mathrm{B}=\mathrm{I}$. Then by the definition of the operator $F(\cdot)$ given in \eqref{op11}, we have
     \begin{align*}
        F(t)w^*&=\int^{t}_{0}\mathscr{G}_{\alpha,\beta}^\nu(t-s)\mathscr{G}_{\alpha,\beta}^\nu(T-s)^{*}w^*\mathrm{d}s.
    \end{align*}
     The operator $F(t)F(T)^{-1}:\mathcal{R}(F(T))\to\mathbb{W}$ is uniformly bounded for all $t\in J$ as the operators $\mathscr{G}_{\alpha,\beta}^\nu(\cdot)$ and $\mathscr{G}_{\alpha,\beta}^\nu(\cdot)^*$ are uniformly bounded. Hence, Assumption \textbf{\textit{(F)}} holds.

    In this case we take
$$h(t,w(t,\xi))=e^{-\mu t}w(t,\xi),t\in J, \xi\in[0,\pi], \mu>0,$$ 
and then define $f:J\times \mathbb{W}\rightarrow \mathbb{W}$  as  
\begin{align*}
f(t,w(t))(\xi):= h(t,w(t,\xi))=e^{-\mu t}w(t)(\xi), \ \xi\in[0,\pi].
\end{align*}
It is clear that $f$ is continuous and 
\begin{align*}
\left\|f(t,w(t))\right\|_{\mathbb{W}}=\left(\int_{0}^{\pi}\left|e^{-\mu t}w(t)(\xi)\right|^2\mathrm{d}\xi\right)^{1/2}\le e^{-\mu t}r\pi=\gamma_{r}(t), \ \mbox{for}\ \left\|w(t)\right\|_{\mathbb{W}}\le r.
\end{align*}
Therefore, the conditions  \textbf{\textit{(F1)}} and \textbf{\textit{(F3)}} are satisfied. 

Next, we verify the condition \eqref{cc4} stated in Theorem \ref{thm4.4}. In order to do this, we choose $r$ such that
\begin{align}\label{ce1}
    \|\zeta\|_{\mathbb{W}}+2\tilde{L}\left(\|\zeta_1\|_{\mathbb{W}}+\|\zeta\|_{\mathbb{W}}\right)<\frac{r}{2}.
\end{align}
Next, we estimates
\begin{align}\label{ce2}
    \|\gamma_{r}\|_{\mathrm{L}^1(J;\mathbb{R}^+)}+ 2\tilde{L}\|\gamma_{r}\|_{\mathrm{L}^1(J;\mathbb{R}^+)}&=\mu(1-e^{-\mu T})(1+2\tilde{L})r\pi.
\end{align}
From the above fact, it is ensure that we can choose a $\mu>0$ (sufficiently small) such that
\begin{align}\label{ce3}
    \mu\pi(1-e^{-\mu T})(1+2\tilde{L})\le \frac{1}{2}.
\end{align}
Combining the estimates \eqref{ce1}, \eqref{ce2}, and \eqref{ce3}, we conclude that the condition \eqref{cc4} is satisfied.

In the end, we verify the approximate controllability of the corresponding linear system associated with \eqref{SEq}. To achieve this, we consider $\mathrm{B}^*\mathscr{G}_{\alpha,\beta}^\nu(T-t)^*w^{*}=0,$ for any $w^{*}\in\mathbb{W}$. Then, we have
	\begin{align*}
		\mathrm{B}^*\mathscr{G}_{\alpha,\beta}^\nu(T-t)^*w^{*}=0\Rightarrow \mathscr{G}_{\alpha,\beta}^\nu(T-t)^*w^{*}=0\Rightarrow w^{*}=0.
	\end{align*}


Therefore, the linear system corresponding to \eqref{SEq} is approximately controllable. Finally, applying Theorem \ref{thm4.4}, we deduce that the semilinear system \eqref{SEq}, equivalent to the system \eqref{57} is achieved the approximate controllability.

The following Proposition investigates the approximate controllability of the abstract linear system corresponding to \eqref{57} is approximately controllable on $[0,t]$ for all $0<t\le T$ and corresponds to an extension of the rank condition for $C_0$-semigroups (\cite[Theorem 3.16]{Cu-Pr}).

\begin{pro}\label{Prop4.2}
 The abstract linear system corresponding to \eqref{57} is approximately controllable on $[0,t]$ for all $0<t\le T$, if Span$\{\mathrm{A}^n\mathrm{B}U_{\infty}: n\in\mathbb{N}_0\}$ is dense in $\mathbb{W}_p$ for $p\in[2,\infty)$, where $U_\infty=\{u\in\mathbb{U}: \mathrm{B}u\in\cap_{k=1}^{\infty} D(\mathrm{A}^n)\}$.
\end{pro}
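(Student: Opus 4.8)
The plan is to reduce the statement to the dual (observability) criterion for approximate controllability furnished by Lemma \ref{lem3.4}, applied on the interval $[0,t]$ rather than on $J$. Thus it suffices to show that if $w^*\in\mathbb{W}_p^*$ satisfies $\mathrm{B}^*\mathscr{G}_{\alpha,\beta,p}^\nu(t-s)^*w^*=0$ for all $s\in[0,t]$, then $w^*=0$. After the substitution $\tau=t-s$, this hypothesis is equivalent to $\mathrm{B}^*\mathscr{G}_{\alpha,\beta,p}^\nu(\tau)^*w^*=0$ for every $\tau\in[0,t]$, and this is the form I would work with.

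First I would fix an arbitrary $u\in U_\infty$, so that $\mathrm{B}u\in\bigcap_{n=1}^{\infty} D(\mathrm{A}_p^n)$, and introduce the scalar function $g(\tau):=\langle w^*,\mathscr{G}_{\alpha,\beta,p}^\nu(\tau)\mathrm{B}u\rangle$ for $\tau\in[0,t]$. Using the adjoint identity $\langle \mathrm{B}^*\mathscr{G}_{\alpha,\beta,p}^\nu(\tau)^*w^*,u\rangle=\langle w^*,\mathscr{G}_{\alpha,\beta,p}^\nu(\tau)\mathrm{B}u\rangle$, the hypothesis forces $g\equiv 0$ on $[0,t]$.

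The crucial step is to differentiate $g$ repeatedly at $\tau=0$. Since $\mathrm{B}u\in\bigcap_{n}D(\mathrm{A}_p^n)$, the map $\tau\mapsto\mathscr{G}_{\alpha,\beta,p}^\nu(\tau)\mathrm{B}u$ is infinitely differentiable in $\mathbb{W}_p$ and satisfies $\frac{d^n}{d\tau^n}\mathscr{G}_{\alpha,\beta,p}^\nu(0)\mathrm{B}u=\mathrm{A}_p^n\mathrm{B}u$ for every $n\in\mathbb{N}_0$, as established in Step 1 above through the Mittag-Leffler representation of the resolvent family. Because $w^*$ is a bounded linear functional, differentiation commutes with the pairing, whence $g^{(n)}(\tau)=\langle w^*,\frac{d^n}{d\tau^n}\mathscr{G}_{\alpha,\beta,p}^\nu(\tau)\mathrm{B}u\rangle$; evaluating at $\tau=0$ and invoking $g\equiv 0$ yields $\langle w^*,\mathrm{A}_p^n\mathrm{B}u\rangle=0$ for all $n\in\mathbb{N}_0$. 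As $u\in U_\infty$ was arbitrary, $w^*$ annihilates the set $\{\mathrm{A}_p^n\mathrm{B}u:n\in\mathbb{N}_0,\ u\in U_\infty\}$, hence also its linear span and, by continuity of $w^*$, the closure of that span.

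Finally, the density hypothesis gives $\overline{\mathrm{Span}\{\mathrm{A}_p^n\mathrm{B}U_\infty:n\in\mathbb{N}_0\}}=\mathbb{W}_p$, so $w^*$ vanishes on all of $\mathbb{W}_p$, i.e.\ $w^*=0$; Lemma \ref{lem3.4} then yields approximate controllability on $[0,t]$, and since $t\in(0,T]$ was arbitrary the proposition follows. I expect the main obstacle to be the rigorous justification of the term-by-term differentiation of $\mathscr{G}_{\alpha,\beta,p}^\nu(\cdot)\mathrm{B}u$ and the identification of its $n$-th derivative at the origin with $\mathrm{A}_p^n\mathrm{B}u$; this relies on the absolute convergence of the Mittag-Leffler series together with the membership $\mathrm{B}u\in\bigcap_n D(\mathrm{A}_p^n)$, which jointly permit differentiation under the summation.
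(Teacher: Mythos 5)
The paper never actually writes out a proof of Proposition \ref{Prop4.2}: it only asserts the result as an extension of the rank condition for $C_0$-semigroups, with a pointer to \cite[Theorem 3.16]{Cu-Pr}. Your argument is the natural (and evidently intended) adaptation of that classical proof, and it is correct: you invoke the duality criterion of Lemma \ref{lem3.4} on $[0,t]$, reduce the observability hypothesis to $\langle w^*,\mathscr{G}_{\alpha,\beta,p}^\nu(\tau)\mathrm{B}u\rangle=0$ for $\tau\in[0,t]$, differentiate repeatedly at $\tau=0^+$ using the identity $\tfrac{d^n}{dt^n}\mathscr{G}_{\alpha,\beta,p}^\nu(0)g=\mathrm{A}_p^n g$ that the paper itself derives in Step 1 of the application, and finish with the density hypothesis and Hahn--Banach. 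The one point worth tightening is exactly the one you flag: the paper states that derivative identity ``for all $g\in D(\mathrm{A}_p)$,'' but the series $\sum_{m}(-\lambda_m)^n\langle g,\nu_m\rangle\nu_m$ converges in $\mathbb{W}_p$ (and the term-by-term differentiation of the Mittag--Leffler representation is legitimate) precisely when $g\in D(\mathrm{A}_p^n)$; since you only apply it to $g=\mathrm{B}u$ with $u\in U_\infty$, so that $\mathrm{B}u\in\bigcap_{n}D(\mathrm{A}_p^n)$, your use of it is justified, and this is in fact the reason the hypothesis involves $U_\infty$ rather than all of $\mathbb{U}$.
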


\section{Concluding Remarks}
This study was began by proving the well-posedness of the system of the linear non-homogeneous integrodifferential equation give in \eqref{NHS} by showing the existence of the resolvent family $\mathscr{G}_{\alpha,\beta}^\nu(\cdot)$. Further, we developed some important properties associated with the resolvent family. Consequentely, using the resolvent family and  the Schauder fixed point theorem, we obtained the existence of a mild solution of the system \eqref{SEq} for a given control $u\in\mathrm{L}^{2}(J;\mathbb{U})$. Next, we examined the approximate controllability issue for the linear system \eqref{LEq}, focusing on the associated optimization problem considered in Theorem \ref{exi} and determining the expression for the optimal control, as presented in Lemma \ref{lem3.2}. Further, we derived sufficient conditions of the approximate controllability for the semilinear system \eqref{SEq} in a super-reflexive Banach space (see Theorems \ref{thm3.11} and \ref{thm4.4}). We also discussed the approximate controllability of the semilinear system \eqref{SEq} in a general Banach spaces. In the end, we utilized our results to study the approximate controllability of a heat equation with singular memory.

\begin{appendix}
		\renewcommand{\thesection}{\Alph{section}}
		\numberwithin{equation}{section}
	
	\section{}\label{ap}\setcounter{equation}{0}
In this section, we collect auxiliary results that are useful in the developments of the paper.
    \begin{lem}\label{lem2.12}
	If the resolvent operator $\mathscr{G}_{\alpha,\beta}^\nu(t)$ is compact for $t>0$, then the operator $\mathcal{I}:\mathrm{L}^{2}(J;\mathbb{W})\rightarrow C(J;\mathbb{W})$ is given by
	\begin{align*}
		(\mathcal{I}g)(t)= \int^{t}_{a}\mathscr{G}_{\alpha,\beta}^\nu(t-s)g(s)\mathrm{d}s, \ t\in J,
	\end{align*}
	is compact.
\end{lem}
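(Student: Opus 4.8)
The plan is to apply the Arzelà--Ascoli theorem to the image of an arbitrary bounded set $D\subset \mathrm{L}^2(J;\mathbb{W})$, say $\|g\|_{\mathrm{L}^2(J;\mathbb{W})}\le \rho$ for all $g\in D$, and to conclude compactness of $\mathcal{I}$ from the relative compactness of $\mathcal{I}(D)$ in $C(J;\mathbb{W})$. First I would record the uniform boundedness of $\mathcal{I}(D)$: using $\sup_{t}\|\mathscr{G}_{\alpha,\beta}^\nu(t)\|_{\mathcal{L}(\mathbb{W})}\le N$ together with the Cauchy--Schwarz inequality,
$$\|(\mathcal{I}g)(t)\|_{\mathbb{W}}\le N\int_a^t\|g(s)\|_{\mathbb{W}}\,\mathrm{d}s\le N(T-a)^{1/2}\rho,$$
so $\mathcal{I}(D)$ is bounded independently of $g\in D$ and $t\in J$.

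Next I would prove that $\mathcal{I}(D)$ is equicontinuous. For $a\le t_1<t_2\le T$ I split
$$(\mathcal{I}g)(t_2)-(\mathcal{I}g)(t_1)=\int_{t_1}^{t_2}\mathscr{G}_{\alpha,\beta}^\nu(t_2-s)g(s)\,\mathrm{d}s+\int_a^{t_1}\left[\mathscr{G}_{\alpha,\beta}^\nu(t_2-s)-\mathscr{G}_{\alpha,\beta}^\nu(t_1-s)\right]g(s)\,\mathrm{d}s.$$
The first term is bounded by $N(t_2-t_1)^{1/2}\rho$ and vanishes as $t_2-t_1\to 0$. For the second term, exactly as in Step (3) of Theorem \ref{thm2.9}, I would introduce a small $\epsilon>0$ and further split $\int_a^{t_1}=\int_a^{t_1-\epsilon}+\int_{t_1-\epsilon}^{t_1}$: on $[a,t_1-\epsilon]$ the operator difference is controlled by $\sup_{s\in[a,t_1-\epsilon]}\|\mathscr{G}_{\alpha,\beta}^\nu(t_2-s)-\mathscr{G}_{\alpha,\beta}^\nu(t_1-s)\|_{\mathcal{L}(\mathbb{W})}$, which tends to $0$ as $t_2-t_1\to 0$ by the continuity of $\mathscr{G}_{\alpha,\beta}^\nu(\cdot)$ in the uniform operator topology on $(0,\infty)$ (Lemma \ref{lem2.7}), while the integral over $[t_1-\epsilon,t_1]$ is bounded by $2N\epsilon^{1/2}\rho$ and is made small by choosing $\epsilon$ small. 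Hence $\mathcal{I}(D)$ is equicontinuous on $J$.

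The crux, and the main obstacle, is to show that for each fixed $t\in J$ the section $\mathfrak{B}(t)=\{(\mathcal{I}g)(t):g\in D\}$ is relatively compact in $\mathbb{W}$; this is where the compactness hypothesis on $\mathscr{G}_{\alpha,\beta}^\nu$ is used. The case $t=a$ is trivial. For $a<t\le T$ and $0<\eta<t-a$ I would introduce the regularized operator
$$(\mathcal{I}_\eta g)(t)=\mathscr{G}_{\alpha,\beta}^\nu(\eta)\int_a^{t-\eta}\mathscr{G}_{\alpha,\beta}^\nu(t-s-\eta)g(s)\,\mathrm{d}s.$$
Since the inner integral lies in a fixed bounded subset of $\mathbb{W}$ uniformly in $g\in D$, and $\mathscr{G}_{\alpha,\beta}^\nu(\eta)$ is compact, the set $\{(\mathcal{I}_\eta g)(t):g\in D\}$ is relatively compact for each such $\eta$. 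It then remains to show that $(\mathcal{I}_\eta g)(t)\to(\mathcal{I}g)(t)$ as $\eta\to 0^+$, uniformly in $g\in D$. Writing
$$(\mathcal{I}g)(t)-(\mathcal{I}_\eta g)(t)=\int_a^{t-\eta}\left[\mathscr{G}_{\alpha,\beta}^\nu(t-s)-\mathscr{G}_{\alpha,\beta}^\nu(\eta)\mathscr{G}_{\alpha,\beta}^\nu(t-s-\eta)\right]g(s)\,\mathrm{d}s+\int_{t-\eta}^{t}\mathscr{G}_{\alpha,\beta}^\nu(t-s)g(s)\,\mathrm{d}s,$$
the last integral is at most $N\eta^{1/2}\rho$, while Cauchy--Schwarz bounds the first by
$$\left(\int_a^{t-\eta}\left\|\mathscr{G}_{\alpha,\beta}^\nu(t-s)-\mathscr{G}_{\alpha,\beta}^\nu(\eta)\mathscr{G}_{\alpha,\beta}^\nu(t-s-\eta)\right\|_{\mathcal{L}(\mathbb{W})}^2\,\mathrm{d}s\right)^{1/2}\rho.$$
By Lemma \ref{lem2.6}(b) the integrand tends to $0$ pointwise in $s$ as $\eta\to0^+$ and is dominated by the constant $(N+N^2)^2$, so the bounded convergence theorem drives this bound to $0$ independently of $g\in D$. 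Consequently $\mathfrak{B}(t)$ is a uniform limit of relatively compact sets, hence totally bounded, hence relatively compact.

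Combining the three steps, $\mathcal{I}(D)$ is bounded, equicontinuous, and pointwise relatively compact, so the Arzelà--Ascoli theorem yields that $\mathcal{I}(D)$ is relatively compact in $C(J;\mathbb{W})$. Since $D$ was an arbitrary bounded set, $\mathcal{I}$ is compact. The only genuinely delicate point is the uniform-in-$g$ convergence of the regularization $\mathcal{I}_\eta$, which rests essentially on the near-semigroup identity of Lemma \ref{lem2.6}(b).
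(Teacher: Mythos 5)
Your proof is correct; the paper states Lemma \ref{lem2.12} in the appendix without proof, and your argument (uniform bound and equicontinuity via Lemma \ref{lem2.7}, then the $\mathscr{G}_{\alpha,\beta}^\nu(\eta)$-regularization of the integral operator combined with Lemma \ref{lem2.6}(b) and Arzel\`a--Ascoli) is precisely the technique the paper itself deploys in Step (3) of Theorem \ref{thm2.9}. No gaps; the uniform-in-$g$ convergence of $\mathcal{I}_\eta$ via Cauchy--Schwarz and bounded convergence is handled correctly.
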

\begin{lem}\label{lem5.3.2}Let the operator $\mathcal{G}:\mathrm{L}^1(J;\mathbb{W})\to C([a,b];\mathbb{W})$ as 
	\begin{align}\label{eqn5.3.2}
		(\mathcal{G}g)(t):=\int_{a}^{t}\mathscr{G}_{\alpha,\beta}^\nu(t-s)g(s)\mathrm{d}s,\ t\in J.
	\end{align}
	If $\{g_n\}_{n=1}^{\infty}\subset\mathrm{L}^1(J;\mathbb{W})$ is any integrably bounded sequence, then the sequence $v_n:=\mathcal{G}(g_n)$ is relatively compact.
\end{lem}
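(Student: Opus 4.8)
The plan is to verify the three hypotheses of the Arzel\`a--Ascoli theorem for the sequence $\{v_n\}$ in $C([a,b];\mathbb{W})$ and then conclude relative compactness; the argument parallels the compactness step of Theorem~\ref{thm2.9} and of Lemma~\ref{lem2.12}, the only new feature being that the domination is supplied by the integrable-boundedness hypothesis rather than by an $\mathrm{L}^2$ bound. Let $\psi\in\mathrm{L}^1(J;\mathbb{R}^+)$ be such that $\|g_n(s)\|_{\mathbb{W}}\le\psi(s)$ for a.e.\ $s\in J$ and all $n$. Uniform boundedness is immediate from $\sup_{t}\|\mathscr{G}_{\alpha,\beta}^\nu(t)\|_{\mathcal{L}(\mathbb{W})}\le N$, since
$$\|v_n(t)\|_{\mathbb{W}}\le\int_a^t\|\mathscr{G}_{\alpha,\beta}^\nu(t-s)\|_{\mathcal{L}(\mathbb{W})}\|g_n(s)\|_{\mathbb{W}}\,\mathrm{d}s\le N\|\psi\|_{\mathrm{L}^1(J;\mathbb{R}^+)}$$
for all $t\in[a,b]$ and all $n$.

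For equicontinuity, take $a\le t_1\le t_2\le b$ and split
$$v_n(t_2)-v_n(t_1)=\int_{t_1}^{t_2}\mathscr{G}_{\alpha,\beta}^\nu(t_2-s)g_n(s)\,\mathrm{d}s+\int_a^{t_1}\bigl[\mathscr{G}_{\alpha,\beta}^\nu(t_2-s)-\mathscr{G}_{\alpha,\beta}^\nu(t_1-s)\bigr]g_n(s)\,\mathrm{d}s.$$
The first term is bounded by $N\int_{t_1}^{t_2}\psi(s)\,\mathrm{d}s$, which tends to $0$ as $|t_2-t_1|\to0$ by absolute continuity of the integral. In the second term, for a.e.\ fixed $s<t_1$ one has $t_1-s>0$, so $\|\mathscr{G}_{\alpha,\beta}^\nu(t_2-s)-\mathscr{G}_{\alpha,\beta}^\nu(t_1-s)\|_{\mathcal{L}(\mathbb{W})}\to0$ as $t_2\to t_1$ by continuity of $\mathscr{G}_{\alpha,\beta}^\nu(\cdot)$ in the uniform operator topology for positive times (Lemma~\ref{lem2.7}), while the integrand is dominated by $2N\psi(s)$; hence the second term is controlled by $\int_a^{t_1}\|\mathscr{G}_{\alpha,\beta}^\nu(t_2-s)-\mathscr{G}_{\alpha,\beta}^\nu(t_1-s)\|_{\mathcal{L}(\mathbb{W})}\psi(s)\,\mathrm{d}s$, which vanishes by dominated convergence. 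Both bounds are independent of $n$, so the convergence is uniform in $n$.

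The crux is the pointwise relative compactness of $\{v_n(t)\}_n$ in $\mathbb{W}$ for each fixed $t\in(a,b]$. For $0<\eta<t-a$ I would introduce the regularized elements
$$v_n^{\eta}(t):=\mathscr{G}_{\alpha,\beta}^\nu(\eta)\int_a^{t-\eta}\mathscr{G}_{\alpha,\beta}^\nu(t-s-\eta)g_n(s)\,\mathrm{d}s.$$
The inner integrals lie in the bounded set $\{z:\|z\|_{\mathbb{W}}\le N\|\psi\|_{\mathrm{L}^1(J;\mathbb{R}^+)}\}$, and since the resolvent family is compact for positive times (Lemma~\ref{lem2.6}), the set $\{v_n^{\eta}(t)\}_n$ is relatively compact for each fixed $\eta$. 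Writing
$$v_n(t)-v_n^{\eta}(t)=\int_{t-\eta}^{t}\mathscr{G}_{\alpha,\beta}^\nu(t-s)g_n(s)\,\mathrm{d}s+\int_a^{t-\eta}\bigl[\mathscr{G}_{\alpha,\beta}^\nu(t-s)-\mathscr{G}_{\alpha,\beta}^\nu(\eta)\mathscr{G}_{\alpha,\beta}^\nu(t-s-\eta)\bigr]g_n(s)\,\mathrm{d}s$$
I would estimate
$$\|v_n(t)-v_n^{\eta}(t)\|_{\mathbb{W}}\le N\int_{t-\eta}^{t}\psi(s)\,\mathrm{d}s+\int_a^{t-\eta}\bigl\|\mathscr{G}_{\alpha,\beta}^\nu(t-s)-\mathscr{G}_{\alpha,\beta}^\nu(\eta)\mathscr{G}_{\alpha,\beta}^\nu(t-s-\eta)\bigr\|_{\mathcal{L}(\mathbb{W})}\psi(s)\,\mathrm{d}s.$$
The right-hand side is independent of $n$; the first summand vanishes as $\eta\to0^+$ by absolute continuity, and the second vanishes by Lemma~\ref{lem2.6}, whose approximate factorization forces the operator-norm difference to tend to $0$ for each fixed $s$, while it stays dominated by $(N+N^2)\psi(s)$, so dominated convergence applies. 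Thus $\{v_n(t)\}_n$ is approximated, uniformly in $n$, by relatively compact sets, hence is totally bounded and therefore relatively compact in $\mathbb{W}$.

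The step I expect to demand the most care is this last one: because $(\mathscr{G}_{\alpha,\beta}^\nu(t))_{t\ge0}$ is a resolvent family and not a semigroup, the identity $\mathscr{G}_{\alpha,\beta}^\nu(t-s)=\mathscr{G}_{\alpha,\beta}^\nu(\eta)\mathscr{G}_{\alpha,\beta}^\nu(t-s-\eta)$ holds only approximately, so the vanishing of the error is not a mere continuity statement but relies essentially on Lemma~\ref{lem2.6} combined with the uniform $\mathrm{L}^1$-domination $\psi$; it is precisely this uniform domination that makes every limit above independent of $n$. With the three Arzel\`a--Ascoli hypotheses established, relative compactness of $\{v_n\}$ in $C([a,b];\mathbb{W})$ follows at once.
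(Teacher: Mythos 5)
Your proof is correct and follows exactly the route the paper itself uses for this kind of compactness claim: the paper states Lemma \ref{lem5.3.2} in the appendix without proof, but its argument is the Arzel\`a--Ascoli scheme of Step (3) of Theorem \ref{thm2.9} (uniform bound, equicontinuity via Lemma \ref{lem2.7}, and the $\mathscr{G}_{\alpha,\beta}^\nu(\eta)$-regularization with the approximate factorization of Lemma \ref{lem2.6}), with the $\mathrm{L}^2$ control--nonlinearity domination replaced by the integrable bound $\psi$. Your handling of the two genuinely delicate points --- dominated convergence justified by the $n$-independent majorant $\psi$, and the fact that the resolvent family only satisfies the semigroup identity approximately --- is exactly what is needed, so nothing is missing.
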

\begin{cor}\label{cor1}
	Let $\{g_n\}_{n=1}^{\infty}\subset\mathrm{L}^1(J;\mathbb{W})$ be an integrably bounded sequence such that $$g_n\xrightharpoonup{w} g \ \text{ in }\ \mathrm{L}^1(J;\mathbb{W}) \ \mbox{as}\ n\to\infty.$$ Then
	$$\mathcal{G}(g_n)\to\mathcal{G}(g)\ \text{ in }\ C(J;\mathbb{W}) \ \mbox{as}\ n\to\infty.$$ 
\end{cor}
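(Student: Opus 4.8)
The plan is to combine two ingredients: first, that the weak convergence $g_n \xrightharpoonup{w} g$ in $\mathrm{L}^1(J;\mathbb{W})$ forces pointwise \emph{weak} convergence $(\mathcal{G}g_n)(t) \xrightharpoonup{w} (\mathcal{G}g)(t)$ in $\mathbb{W}$ for every $t \in J$; and second, that the integrable boundedness of $\{g_n\}$ makes $\{\mathcal{G}(g_n)\}$ relatively compact in $C(J;\mathbb{W})$ by Lemma \ref{lem5.3.2}. A standard subsequence argument then upgrades the weak pointwise convergence into strong convergence in the uniform norm.

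For the first ingredient, I would fix $t \in J$ and $\phi \in \mathbb{W}^*$ and consider the linear functional $\ell_{t,\phi}(g) := \langle \phi, (\mathcal{G}g)(t)\rangle = \int_a^t \langle \mathscr{G}_{\alpha,\beta}^\nu(t-s)^*\phi, g(s)\rangle\,\mathrm{d}s$ defined on $\mathrm{L}^1(J;\mathbb{W})$. Since $\mathscr{G}_{\alpha,\beta}^\nu(\cdot)$ is strongly continuous, the integrand is measurable, and the uniform bound $\|\mathscr{G}_{\alpha,\beta}^\nu(t-s)^*\|_{\mathcal{L}(\mathbb{W}^*)} = \|\mathscr{G}_{\alpha,\beta}^\nu(t-s)\|_{\mathcal{L}(\mathbb{W})} \le N$ yields $|\ell_{t,\phi}(g)| \le N\|\phi\|_{\mathbb{W}^*}\|g\|_{\mathrm{L}^1(J;\mathbb{W})}$. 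Hence $\ell_{t,\phi} \in (\mathrm{L}^1(J;\mathbb{W}))^*$, so the hypothesis $g_n \xrightharpoonup{w} g$ gives $\ell_{t,\phi}(g_n) \to \ell_{t,\phi}(g)$; since $\phi$ is arbitrary, this is precisely $(\mathcal{G}g_n)(t) \xrightharpoonup{w} (\mathcal{G}g)(t)$ in $\mathbb{W}$. Note that this step never needs a characterization of the full dual of $\mathrm{L}^1(J;\mathbb{W})$: it suffices that this particular $\ell_{t,\phi}$ be bounded, which the resolvent bound $N$ guarantees.

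For the second ingredient, Lemma \ref{lem5.3.2} applies verbatim to the integrably bounded sequence $\{g_n\}$, so $\{\mathcal{G}(g_n)\}$ is relatively compact in $C(J;\mathbb{W})$. I would then argue by contradiction: if $\mathcal{G}(g_n) \not\to \mathcal{G}(g)$ in $C(J;\mathbb{W})$, extract a subsequence $\{g_{n_k}\}$ with $\|\mathcal{G}(g_{n_k}) - \mathcal{G}(g)\|_{C(J;\mathbb{W})} \ge \varepsilon$ for some $\varepsilon>0$. By relative compactness, pass to a further subsequence along which $\mathcal{G}(g_{n_k}) \to v$ in $C(J;\mathbb{W})$; uniform convergence forces $(\mathcal{G}g_{n_k})(t) \to v(t)$ strongly, hence weakly, for each $t$. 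Matching this against the weak pointwise limit from the first ingredient and using uniqueness of weak limits in $\mathbb{W}$ gives $v(t) = (\mathcal{G}g)(t)$ for all $t$, i.e. $v = \mathcal{G}(g)$, which contradicts $\|v - \mathcal{G}(g)\|_{C(J;\mathbb{W})} \ge \varepsilon$. Therefore $\mathcal{G}(g_n) \to \mathcal{G}(g)$ in $C(J;\mathbb{W})$.

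I expect the main obstacle to be the weak-to-strong upgrade, which rests entirely on the compactness of $\{\mathcal{G}(g_n)\}$ supplied by Lemma \ref{lem5.3.2}; without that compactness, weak pointwise convergence alone could never produce norm convergence, still less uniform convergence in $t$. The verification that $\ell_{t,\phi}$ is a bounded functional is routine once the uniform bound $N$ and strong continuity of $\mathscr{G}_{\alpha,\beta}^\nu(\cdot)$ are in hand, and the uniqueness-of-weak-limits identification of the limit is immediate.
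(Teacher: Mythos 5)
Your proof is correct and follows exactly the route the paper intends: the paper states this corollary immediately after Lemma \ref{lem5.3.2} without a written proof, the implied argument being precisely your combination of relative compactness of $\{\mathcal{G}(g_n)\}$ in $C(J;\mathbb{W})$ with identification of the limit via pointwise weak convergence, the latter obtained by testing against the bounded functionals $g\mapsto\int_a^t\langle \mathscr{G}_{\alpha,\beta}^\nu(t-s)^*\phi,g(s)\rangle\,\mathrm{d}s$. Your observation that no characterization of $(\mathrm{L}^1(J;\mathbb{W}))^*$ is needed, only the boundedness of these particular functionals via the uniform resolvent bound $N$, is exactly the right point to make.
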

Now we introduce some special class of Banach spaces and reviews their characterization.

\begin{Def} A Banach space $\mathbb{W}$ is called \emph{strictly convex} if for any $w_1,w_2\in\mathbb{W}$ with $\left\|w_1\right\|_{\mathbb{W}}=\left\|w_2\right\|_{\mathbb{W}}=1$, the inequality
	$$\left\|\rho z_1+(1-\rho)z_2\right\|_{\mathbb{W}}<1,\ \mbox{for}\ 0<\rho<1,$$ holds.
\end{Def}
\begin{Def} A Banach space $\mathbb{W}$ is called \emph{uniformly convex} if for any given $\epsilon>0,$ and for all $w_1,w_2\in\mathbb{W}$ with $\|w_1\|_{\mathbb{W}}=1$ and $\|w_2\|_{\mathbb{W}}=1$, there exists a $\delta=\delta(\epsilon)$ such that
	$$\left\|w_1-w_2\right\|_{\mathbb{W}}\ge\epsilon \implies \left\|w_1+w_2\right\|_{\mathbb{W}}\le 2(1-\delta).$$
\end{Def}
\begin{Def}
	A Banach space $\mathbb{W}$ is said to be \emph{uniformly smooth} if for any given $\epsilon>0$, there exists a $\delta>0$ such that  $$\frac{1}{2}\left(\|w_1+w_2\|_{\mathbb{W}}+\|w_1-w_2\|_{\mathbb{W}}\right)-1\le\epsilon\|w_2\|_{\mathbb{W}},$$ where  $w_1,w_2\in\mathbb{W}$ with $\|w_1\|_{\mathbb{W}}=1$ and $\|w_2\|_{\mathbb{W}}\le\delta$.
\end{Def}
Now we recall the concept of the \emph{modulus of convexity} and the \emph{modulus of smoothness}  of a Banach space $\mathbb{W}$.

The \emph{modulus of convexity} of the Banach space $\mathbb{W}$ is given as
\begin{align*}
	\delta_{\mathbb{W}}(\epsilon)=\inf\left\{1-\frac{\|w_1+w_2\|_{\mathbb{W}}}{2}:\|w_1\|_{\mathbb{W}}\le 1, \|w_2\|_{\mathbb{W}}\le 1, \|w_1-w_2\|_{\mathbb{W}}\ge\epsilon\right\}.
\end{align*}
Note that the function $\delta_{\mathbb{W}}(\cdot)$ is defined on the interval $[0,2]$  which is non-decreasing. Moreover, a Banach space $\mathbb{W}$ is \emph{uniformly convex} if and only if $\delta_{\mathbb{W}}(\epsilon)>0$ for all $\epsilon>0$ (cf. \cite{GP1975}).

Next, the function defined by 
\begin{align*}
	\rho_{_{\mathbb{W}}}(\tau)&=\sup\left\{\frac{\|w_1+\tau w_2\|_{\mathbb{W}}}{2}+\frac{\|w_1-\tau w_2\|_{\mathbb{W}}}{2}-1 : \|w_1\|_{\mathbb{W}}=\|w_2\|_{\mathbb{W}}=1\right\},
    \end{align*}
for all $\tau\in[0, \infty)$, is called the \emph{modulus of smoothness} of the space $\mathbb{W}$.

\noindent 	A Banach space $\mathbb{W}$  is uniformly smooth if and only if (cf. \cite{GP1975}) $\lim_{t\to 0^+}\frac{\rho_{_{\mathbb{W}}}(\tau)}{\tau}=0.$

\begin{rem}\label{r2}\cite{CC2009,GP1975}.
	\begin{itemize}
    \item [(a)] Every \emph{uniformly convex} space $\mathbb{W}$ is \emph{strictly convex}. 
		\item[(b)] A Banach space $\mathbb{W}$ is called \emph{$p$-uniformly smooth} ($1<p\le2$), if there exist an equivalent norm on $\mathbb{W}$ holds the property $\rho_{_{\mathbb{W}}}(\tau)\le C\tau^p$ for some constant $C$. Similarly, a Banach space is said to be \emph{$q$-uniformly convex} ($2\le q< \infty$), if there exist an equivalent norm on $\mathbb{W}$ satisfies $\delta_{\mathbb{W}}(\epsilon)\ge C\epsilon^q$ for some constant $C>0$.
        \item[(c)] If $\mathbb{W}^*$ is $p$-smooth, then $\mathbb{W}$ is $q$-convex with $\frac{1}{p}+\frac{1}{q}=1$.
		\item[(d)] A Banach space $\mathbb{W}$ is uniformly smooth if and only if its dual space $\mathbb{W}^*$ is uniformly convex, and vice versa. Additionally, any uniformly convex or uniformly smooth space is $q$-uniformly convex and $p$-uniformly smooth for some $q<\infty$ and $p>1$.
        \item [(d)] Every uniformly convex or uniformly smooth space is super-reflexive.
	\end{itemize}
\end{rem}
\begin{Ex}
	Here are some examples of a super-reflexive space. Let $\Omega\subset\mathbb{R}^n$ be a measurable set.
	\begin{itemize}
		\item [(i)]  The space $\mathrm{L}^p(\Omega)$ for $2\le p<\infty$.
		\item [(ii)] The Sobolev space $\mathrm{W}^{m,p}(\Omega)$ for $2\le p<\infty$ and $m\in\mathbb{N}$.
	\end{itemize}
 \end{Ex}
    \end{appendix}

\medskip\noindent	{\bf Declarations} 

\par {\bf Conflict of interest:} The author has not disclosed any conflict of interest.

\end{document}